\documentclass[a4paper,10pt]{amsart}
\usepackage[utf8]{inputenc}

\usepackage[
  margin=30mm,
  marginparwidth=25mm,     % + <- Width of your marginpar
  marginparsep=2mm,       % + <- Gap between text block and marginpar
%  paperwidth=107.95mm,
%  paperheight=174.63mm,
  bottom=25mm,
  ]{geometry}

\usepackage[bbgreekl]{mathbbol}
\usepackage{amsfonts}
\usepackage{latexsym,amssymb,amsthm,mathrsfs,amsmath,amscd,enumerate,enumitem, amscd,color}
\usepackage{mathrsfs}
\usepackage{tikz}
\usepackage{hyperref}
\usetikzlibrary{plotmarks}
\usepackage{soul}
\usepackage{tensor}

\makeatletter
\@namedef{subjclassname@2020}{%
  \textup{2020} Mathematics Subject Classification}
\makeatother

\DeclareSymbolFontAlphabet{\mathbb}{AMSb}
\DeclareSymbolFontAlphabet{\mathbbl}{bbold}
%Para ver las badboxes, quitar al final
%\overfullrule=5pt
\DeclareSymbolFontAlphabet{\mathbb}{AMSb}
\DeclareSymbolFontAlphabet{\mathbbl}{bbold}
%Para ver las badboxes, quitar al final
%\overfullrule=5pt
\DeclareFontEncoding{FMS}{}{}
\DeclareFontSubstitution{FMS}{futm}{m}{n}
\DeclareFontEncoding{FMX}{}{}
\DeclareFontSubstitution{FMX}{futm}{m}{n}
\DeclareSymbolFont{fouriersymbols}{FMS}{futm}{m}{n}
\DeclareSymbolFont{fourierlargesymbols}{FMX}{futm}{m}{n}
\DeclareMathDelimiter{\VERT}{\mathord}{fouriersymbols}{152}{fourierlargesymbols}{147}
% begin author's definitions

%THEOREM Environments ---------------------------------------------------

\newtheorem{thm}{Theorem}[section]
 \newtheorem{cor}[thm]{Corollary}
 
 \newtheorem{prop}[thm]{Proposition}
\theoremstyle{definition}

 \theoremstyle{remark}

\usepackage{hyperref}

% MATH -------------------------------------------------------------------

\newcommand{\supp}{\mathop{\mathrm{supp}}}

\newcommand{\esssup}{\mathop{\mathrm{ess\, sup \;}}}

%%%------------------------------------------

%\DeclareMathDelimiter{\VERT}{\mathord}{fouriersymbols}{152}{fourierlargesymbols}{147}

%%% ----------------------------------------------------------------------
% end author's definitions

%STYLE------------------------------------------------------------------------
\numberwithin{equation}{section}
\allowdisplaybreaks
%\renewcommand{\theenumi}{\alph{enumi}} %\roman
%\renewcommand{\theenumii}{\arabic{enumii}}
% end author's definitions

\begin{document}

%\footnotetext{Last modification: \today.}

\title[]{Quantitative weighted estimates for Harmonic analysis operators in the Bessel setting by using sparse domination.}

\author[V. Almeida]{V\'{\i}ctor Almeida}

\author[J.J. Betancor]{Jorge J. Betancor}

\author[J.C. Fari\~na]{Juan C. Fari\~na}

\author[L. Rodr\'{\i}guez-Mesa]{Lourdes Rodr\'{\i}guez-Mesa}

\address{V\'{\i}ctor Almeida, Jorge J. Betancor, Juan C. Fari\~na, Lourdes Rodr\'{\i}guez-Mesa\newline
	Departamento de An\'alisis Matem\'atico, Universidad de La Laguna,\newline
	Campus de Anchieta, Avda. Astrof\'isico S\'anchez, s/n,\newline
	38721 La Laguna (Sta. Cruz de Tenerife), Spain}
\email{valmeida@ull.edu.es, jbetanco@ull.es,
jcfarina@ull.es, lrguez@ull.edu.es
}

\thanks{The authors are partially supported by grant PID2019-106093GB-I00 from the Spanish Government}

\subjclass[2020]{42B20, 42B25, 47B90}

\keywords{Hankel transform, maximal operator, square function, variation operator, sparse domination.}

\date{}

\begin{abstract}
In this paper we obtain quantitative weighted $L^p$-inequalities for some operators involving Bessel convolutions. We consider maximal operators, Littlewood-Paley functions and variational operators. We obtain $L^p(w)$-operator norms in terms of the $A_p$-characteristic of the weight $w$. In order to do this we show that the operators under consideration are dominated by a suitable family of sparse operators in the space of homogeneous type $((0,\infty),|\cdot|,x^{2\lambda}dx)$. 
\end{abstract}

\maketitle

\section{Introduction}
In this paper we obtain quantitative weighted $L^p$-inequalities for some operators involving convolutions products in the Bessel setting. We obtain the  $L^p(w)$-operator norm estimates in terms of the $A_p$-characteristic $[w]_{A_p}$ of the weight $w$. In order to do this we prove that the operators under consideration can be dominated pointwisely by a suitable family of sparse operators.

We now define the operators we are going to deal with. Let $\lambda >0$. We denote by $m_\lambda$ the measure on $(0,\infty)$ defined by $dm_\lambda=x^{2\lambda}dx$. $m_\lambda$ is doubling with respect to the usual metric in $(0,\infty)$ defined by the absolute value $|\cdot|$ (see for instance \cite[(1.5)]{YY}). In other words the triple $((0,\infty),|\cdot|,m_\lambda)$ is a space of homogeneous type in the sense of Coifman and Weiss (\cite{CW}).

For every $f,g\in L^1((0,\infty),m_\lambda)$ we define the convolution product $f\#_\lambda g$ of $f$ and $g$ by

$$(f\#_\lambda g)(x)=\int_0^\infty f(y)\; _\lambda\tau_x(g)(y) y^{2\lambda}dy,\;\;\;\;x\in (0,\infty),$$
where, for every $x\in (0,\infty)$,

$$_\lambda\tau_x(g)(y)=\frac{\Gamma(\lambda+1/2)}{\Gamma(\lambda)\sqrt{\pi}}\int_0^\pi g(\sqrt{x^2+y^2-2xy\cos\theta}) \sin^{2\lambda -1}\theta d\theta,\;\;\;y\in (0,\infty).$$
The main properties of $\#_\lambda$ and $_\lambda\tau_x$, $x\in (0,\infty)$, can be found in \cite{Ha} and \cite{Hi}. We define the Hankel transform $h_\lambda f$ of $f\in L^1((0,\infty),m_\lambda)$ as follows

$$h_\lambda(f)(x)=\int_0^\infty (xy)^{-\lambda +1/2}J_{\lambda -1/2}(xy)f(y)y^{2\lambda}dy,\;\;\;\;x\in (0,\infty),$$
where $J_\nu$ represents the Bessel function of first kind and order $\nu>-1$. $h_\lambda$ plays in the harmonic analysis in the Bessel setting the same role than the one of the Fourier transform in the Euclidean harmonic analysis. The operations $\#_\lambda$ and $_\lambda\tau_x$, $x\in (0,\infty)$, are usually named Hankel convolution and Hankel translation because the following equalities hold for every $f,g\in L^1((0,\infty),m_\lambda)$ (\cite{Ha} and \cite{Hi})
$$
h_\lambda(f\#_\lambda g)=h_\lambda(f)h_\lambda(g),
$$
and
$$
h_\lambda(_\lambda\tau_x(g))(y)=2^{\lambda-1/2}\Gamma(\lambda +\frac{1}{2})(xy)^{-\lambda +1/2}J_{\lambda -1/2}(xy)h_\lambda(g)(y),\quad x,y\in (0,\infty).
$$
We now consider the Bessel operator $\Delta_\lambda =-\frac{d^2}{dx^2}-\frac{2\lambda}{x}\frac{d}{dx}$ on $(0,\infty)$. $h_\lambda$, $\#_\lambda$ and $_\lambda\tau_x$, $x\in (0,\infty)$, are connected with $\Delta_\lambda$. For every $f,g\in S(0,\infty)$, the Schwartz space on $(0,\infty)$, we have that
$$
h_\lambda(\Delta_\lambda f)(x)=x^2h_\lambda(f)(x),\quad x\in (0,\infty),
$$
$$
\Delta_\lambda(f\#_\lambda g)=\Delta_\lambda(f)\#_\lambda g,
$$
and, for every $x\in (0,\infty)$,
$$
\Delta_\lambda(_\lambda\tau_x(g))=\; _\lambda\tau_x(\Delta_\lambda g).
$$
The heat and Poisson semigroups associated with $\Delta_\lambda$ are $\#_\lambda$-convolution semigroups. Indeed, if $\{e^{-t\Delta_\lambda}\}_{t>0}$ represents the semigroup generated by $-\Delta_\lambda$ we have that
$$
e^{-t\Delta_\lambda}(f)=W_{\sqrt{2t}}^\lambda\#_\lambda f,\quad t>0,
$$
where
$$
W^\lambda(x)=\frac{2^{1/2-\lambda}}{\Gamma(\lambda +1/2)}e^{-x^2/2},\quad x\in (0,\infty).
$$
The Poisson semigroup $\{e^{-t\sqrt{\Delta_\lambda}}\}_{t>0}$ associated with $\Delta_\lambda$ is defined by
$$
e^{-t\sqrt{\Delta_\lambda}}(f)=P_t^\lambda\#_\lambda f,\quad t>0,
$$
being
$$
P^\lambda(x)=\frac{2\lambda\Gamma(\lambda)}{\Gamma(\lambda +1/2)\sqrt{\pi}}\frac{1}{(1+x^2)^{\lambda +1}},\quad x\in (0,\infty).
$$
If $\phi$ is a complex function defined in $(0,\infty)$ the $t$-dilation $\phi_t$ of $\phi$ is defined in this setting, for every $t>0$, by
$$
\phi_t(x)=\frac{1}{t^{2\lambda +1}}\phi\left(\frac{x}{t}\right),\quad x\in (0,\infty).
$$

The study of harmonic analysis associated with Bessel operators was begun by Muckenhoupt and Stein (\cite{MS}) and continued by Andersen and Kerman (\cite{AK}) and Stempak (\cite{Stem}). In the last fifteen years many problems concerning to the harmonic analysis in the Bessel context has been studied (\cite{BCN}, \cite{BDFS}, \cite{BDT}, \cite{BFS}, \cite{BHNV}, \cite{CZ}, \cite{DLMWY}, \cite{D}, \cite{GS}, \cite{KP}, \cite{K}, \cite{LN}, \cite{NS}, \cite{Vi}, \cite{WYZ} and \cite{YY}).

We now define the operators that we are going to study. As in \cite[Definition 1.4]{YY} we consider the space $Z^\lambda$ that consists of all those $\phi\in C^2(0,\infty)$ satisfying the following properties.

\begin{enumerate}
    \item[(i)] $|\phi(x)|\leq C(1+x^2)^{-\lambda -1},\;\;\;x\in (0,\infty)$, \newline
    \item[(ii)] $|\phi'(x)|\leq Cx(1+x^2)^{-\lambda -2}, \;\;\;x\in (0,\infty)$,\newline
    \item[(iii)]  $|\phi''(x)|\leq C(1+x^2)^{-\lambda -2}, \;\;\;x\in (0,\infty)$.\newline
\end{enumerate}
Let $\phi\in Z^\lambda$. We defined the maximal operator $\phi_*^\lambda$ by
$$
\phi_*^\lambda(f)=\sup_{t>0}|f\#_\lambda\phi_t|,
$$
and the square function $g_\phi^\lambda$ as follows
$$
g_\phi^\lambda(f)(x)=\left(\int_0^\infty|f\#_\lambda\phi_t(x)|^2\frac{dt}{t}\right)^{1/2},\quad x\in (0,\infty).
$$
Let $\rho >2$. The $\rho$-variation operator $V_\rho^\lambda(\{\phi_t\}_{t>0})$ associated with $\{\phi_t\}_{t>0}$ is defined, for every $x\in (0,\infty)$, by
$$V_\rho^\lambda(\{\phi_t\}_{t>0})(f)(x)=\sup_{\substack{0<t_n<t_{n-1}<\cdots<t_1 \\ n\in\mathbb N}}\left(\sum_{j=1}^{n-1}|f\#_\lambda\phi_{t_j}(x)-f\#_\lambda\phi_{t_{j+1}}(x)|^\rho\right)^{1/\rho}.$$
Maximal and  $\rho$-variation operators and square functions for families of operators have had a considerable amount of attention in probability, ergodic theory, and harmonic analysis. As it is well-known the maximal operators $\phi_*^\lambda$ are related with the convergence of $\{f\#_\lambda\phi_t\}_{t>0}$, for instance, as $t\rightarrow 0^+$. If $\phi_*^\lambda$ is bounded in $L^p((0,\infty),m_\lambda)$ for some $1\leq p<\infty$, the almost everywhere convergence of $\{f\#_\lambda\phi_t\}_{t>0}$ as $t\rightarrow 0^+$ is got, for every $f\in L^p((0,\infty),m_\lambda)$ when the almost everywhere convergence is known for every $f\in\mathcal A$ being $\mathcal A$ a dense subset of $L^p((0,\infty),m_\lambda)$. Also, the $L^p((0,\infty),m_\lambda)$-boundedness of the $\rho$-variation operator $V_\rho^\lambda(\{\phi_t\}_{t>0})$ implies almost everywhere convergence of $\{f\#_\lambda\phi_t\}_{t>0}$ as $t\rightarrow 0^+$, for every $f\in L^p((0,\infty),m_\lambda)$ but in this case it is not necessary to know the almost everywhere convergence property for $f$ in some dense subset in $L^p((0,\infty),m_\lambda)$. Furthermore the $\rho$-variation operator $V_\rho^\lambda(\{\phi_t\}_{t>0})$ gives extra information about the convergence of $\{f\#_\lambda\phi_t\}_{t>0}$ as $t\rightarrow 0^+$. For instance $L^p$-boundedness properties of $V_\rho^\lambda(\{\phi_t\}_{t>0})$ allow us to obtain estimates of the $\alpha$-jump that $\{f\#_\lambda\phi_t\}_{t>0}$ has as $t\rightarrow 0^+$. $L^p$-boundedness properties for the square function $g_\phi^\lambda$ lead to $L^p$-estimates for  spectral multipliers for Bessel operators.

Stein, in his monograph \cite{Stein}, established a general theory of harmonic analysis associated with semigroups of operators  that has been fundamental in the development of this area. We refer the interested reader to \cite{CJRW1}, \cite{CJRW2}, \cite{JKRW}, \cite{JSW}, \cite{LeMX1}, \cite{LeMX2}, \cite{Le} and \cite{MTX}, and references therein.

Suppose that $w,b\in L^1_{\rm loc}((0,\infty),m_\lambda)$ and $w_\lambda(I)=\int_I w(x)x^{2\lambda}dx>0$, for every interval $I\subset (0,\infty)$. We say thet $b\in BMO((0,\infty),w,m_\lambda)$ when
$$\|b\|_{BMO((0,\infty),w,\lambda)}:=\sup_{\substack{I\subset(0,\infty) \\ I\;\mbox{interval} }}\frac{1}{w_\lambda(I)}\int_I|b(y)-b_I|y^{2\lambda}dy<\infty.$$
Here, for every interval $I\subset(0,\infty)$, $b_I=\frac{1}{m_\lambda(I)}\int_I b(y)y^{2\lambda}dy$.

The Riesz transform associated with $\Delta_\lambda$ is defined by
$$
\mathcal R_{\Delta_\lambda}=\partial_x\Delta_\lambda^{-1/2}.
$$
In \cite[Theorem 1.3]{DLWY} it was proved that a function $b\in \bigcup_{q>1}L^q_{\rm loc}((0,\infty),m_\lambda)$ is in $ BMO((0,\infty),1,m_\lambda)$ if and only if the commutator $[\mathcal R_{\Delta_\lambda},b]$ defined by
$$
[\mathcal R_{\Delta_\lambda},b]f=\mathcal R_{\Delta_\lambda}(bf)-bR_{\Delta_\lambda}(f)
$$
is bounded from $L^p((0,\infty),m_\lambda)$ into itself, for every $1<p<\infty$.

Let $b$ be a measurable function on $(0,\infty)$. We consider the following higher order commutator operators defined, for every $m\in\mathbb N\setminus\{0\}$, as follows 
$$
\phi_{*,b}^{\lambda,m}(f)(x)=\sup_{t>0}\left|\left(\phi_t\#_\lambda\left[(b(\cdot)-b(x))^m f\right]\right)(x)\right|,\quad x\in (0,\infty),
$$
$$
g_{\phi,b}^{\lambda,m}(f)=\left(\int_0^\infty \left|\left(\phi_t\#_\lambda\left[(b(\cdot)-b(x))^m f\right]\right)(x)\right|^2\frac{dt}{t}\right)^{1/2},\quad x\in (0,\infty),$$
and
\begin{align*} 
  V_{\rho,b}^{\lambda,m}(f)(x) & =
\sup_{\substack{0<t_n<t_{n-1}<...<t_1 \\ n\in\mathbb N}}\Big(\sum_{j=1}^{n-1}\left|\left(\phi_{t_j}\#_\lambda\left[(b(\cdot)-b(x))^m f\right]\right)(x) \right.\\
& -\left. \left(\phi_{t_{j+1}}\#_\lambda\left[(b(\cdot)-b(x))^m f\right]\right)(x)\right|^\rho\Big)^{1/\rho},\;\;\;\;x\in (0,\infty).
\end{align*}
Commutators associated with Littlewood-Paley operators in the Euclidean context were studied in \cite{ChD}, \cite{DX} and \cite{La} and with Hardy-Littlewood maximal operator on spaces of homogeneous type were investigated in \cite{FPW} and \cite{GVW}.

In this paper we obtain quantitative weighted $L^p$-inequalities for the operators $\phi_{*}^{\lambda}$, $\phi_{*,b}^{\lambda,m}$, $g_{\phi}^{\lambda}$, $g_{\phi,b}^{\lambda,m}$, $V_{\rho}^{\lambda}$ and $V_{\rho,b}^{\lambda,m}$.

We say that a measurable function $w$ in $(0,\infty)$ is a weight when $w(x)>0$, for almost all $x\in (0,\infty)$. A weight $w$ in $(0,\infty)$ is said to be in the class $A_p^\lambda(0,\infty)$ provided that

\begin{enumerate}
    \item[(i)] 
    $$[w]_{A_p^\lambda}:=\sup_{\substack{I\subset (0,\infty) \\ I\;\mbox{interval}}}\left(\frac{1}{m_\lambda(I)}\int_I w(x)x^{2\lambda}dx\right)\left(\frac{1}{m_\lambda(I)}\int_I w(x)^{-\frac{1}{p-1}}x^{2\lambda}dx\right)^{p-1}<\infty,$$
    when $1<p<\infty$;
    \item[(ii)]
    $$[w]_{A_1^\lambda}:=\sup_{\substack{I\subset (0,\infty) \\ I\;\mbox{interval}}}\frac{1}{m_\lambda(I)}\int_I w(x)x^{2\lambda}dx\;\esssup_{x\in I}\frac{1}{w(x)}<\infty,$$
    when $p=1$.
\end{enumerate}
By $L^p((0,\infty),w,m_\lambda)$ we understand the $L^p$-space on $(0,\infty)$ with respect to $w(x)x^{2\lambda}dx$.

We are going to state our results

\begin{thm}\label{Th1.1}
Let $1<p<\infty$, $\lambda>0$, $\rho >2$ and $\phi\in Z^\lambda$. Assume that $w\in A_p^\lambda(0,\infty)$. If $T$ represents to $\phi_{*}^{\lambda}$, $g_{\phi}^{\lambda}$ or $V_{\rho}^{\lambda}$, then $T$ can be extended to a bounded operator from $L^p((0,\infty),w,m_\lambda)$ into itself and there exists $C>0$ that does not depend on $w$ such that
$$\|T(f)\|_{L^p((0,\infty),w,m_\lambda)}\leq C[w]_{A_p^\lambda}^{\max\{\frac{1}{p-1},1\}}\|f\|_{L^p((0,\infty),w,m_\lambda)},$$
for every $f\in L^p((0,\infty),w,m_\lambda)$.
\end{thm}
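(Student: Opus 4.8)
The plan is to derive Theorem~\ref{Th1.1} from a pointwise \emph{sparse domination} of the three operators in the space of homogeneous type $((0,\infty),|\cdot|,m_\lambda)$, together with the sharp weighted bound for sparse operators. First I would fix, following Hyt\"onen--Kairema, a finite family $\{\mathcal D^k\}_{k=1}^{K}$ of adjacent dyadic systems of intervals on $(0,\infty)$ adapted to $m_\lambda$, so that every interval $I(x,r)=(x-r,x+r)\cap(0,\infty)$ is contained in some $Q\in\bigcup_{k}\mathcal D^k$ with $m_\lambda(Q)\le C\,m_\lambda(I(x,r))$. For a sparse subfamily $\mathcal S$ of one of the $\mathcal D^k$ put $A_{\mathcal S}f=\sum_{Q\in\mathcal S}\langle|f|\rangle_{Q}\mathbf 1_Q$, with $\langle|f|\rangle_{Q}=\frac{1}{m_\lambda(Q)}\int_Q|f|\,dm_\lambda$. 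The relevant input, which belongs to the general theory of sparse operators on spaces of homogeneous type (Cruz-Uribe--Martell--P\'erez, Hyt\"onen, Moen), is the estimate
\[
\|A_{\mathcal S}f\|_{L^p((0,\infty),w,m_\lambda)}\le C\,[w]_{A_p^\lambda}^{\max\{1,\,1/(p-1)\}}\,\|f\|_{L^p((0,\infty),w,m_\lambda)},\qquad 1<p<\infty,
\]
with $C$ independent of $w$ and $\mathcal S$.

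Thus the theorem reduces to proving that, for $T$ equal to $\phi_*^\lambda$, $g_\phi^\lambda$ or $V_\rho^\lambda$ and for every bounded $f$ with compact support in $(0,\infty)$, there exist $k$ and a sparse $\mathcal S\subset\mathcal D^k$ (both depending on $f$) with
\[
|Tf(x)|\le C\sum_{Q\in\mathcal S}\langle|f|\rangle_{Q}\,\mathbf 1_Q(x),\qquad\text{a.e. }x\in(0,\infty),
\]
and $C$ independent of $f$. To get this I would use the grand maximal truncation method of Lerner and Lacey in the homogeneous-type setting: it is enough to check that $T$ and the associated operator
\[
\mathcal M_T^{\#}f(x)=\sup_{Q\ni x}\ \esssup_{\xi\in Q}\ \big|T\big(f\,\mathbf 1_{(0,\infty)\setminus 3Q}\big)(\xi)\big|
\]
are both of weak type $(1,1)$ with respect to $m_\lambda$. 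These, in turn, follow from Calder\'on--Zygmund theory for (vector-valued) singular integrals in $((0,\infty),|\cdot|,m_\lambda)$, once the right kernel estimates are available.

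For the kernel estimates, write $(f\#_\lambda\phi_t)(x)=\int_0^\infty {}_\lambda\tau_x(\phi_t)(y)\,f(y)\,y^{2\lambda}\,dy$, so that $T$ is a singular integral with kernel $\mathbb K(x,y)$ taking values in $\mathbb C$ (via $\sup_{t>0}$) for $\phi_*^\lambda$, in $L^2((0,\infty),dt/t)$ for $g_\phi^\lambda$, and in the $\rho$-variation space for $V_\rho^\lambda$; concretely $\mathbb K(x,y)$ is built from $\{{}_\lambda\tau_x(\phi_t)(y)\}_{t>0}$. Using the integral representation of ${}_\lambda\tau_x$ and the decay and smoothness of $\phi,\phi',\phi''$ encoded in (i)--(iii) of $Z^\lambda$, one shows the size estimate $\|\mathbb K(x,y)\|\le C/m_\lambda(I(x,|x-y|))$ together with the H\"ormander smoothness estimates in $x$ and in $y$, with the gain $|x-x'|/|x-y|$ when $|x-x'|\le|x-y|/2$; estimates of this type for Hankel translations of functions with $Z^\lambda$-decay are available in the literature on Bessel harmonic analysis (\cite{BDT}, \cite{BFS}, \cite{YY}) and can also be obtained directly. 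Combining these with the $L^2((0,\infty),m_\lambda)$-boundedness of $T$ --- elementary for $\phi_*^\lambda$ (domination by the Hardy--Littlewood maximal function on $m_\lambda$), via Plancherel for the Hankel transform for $g_\phi^\lambda$, and, for $V_\rho^\lambda$, through the Jones--Seeger--Wright short/long variation splitting using $\rho>2$ --- yields the two weak-type $(1,1)$ bounds, hence the sparse domination, and then Theorem~\ref{Th1.1} by the displayed weighted bound for $A_{\mathcal S}$ and a supremum over the finitely many dyadic systems.

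The step I expect to be the main obstacle is the vector-valued kernel analysis: since ${}_\lambda\tau_x$ is not Euclidean translation, its kernel degenerates as $x$ or $y$ tends to $0$, and one must exploit the precise form of $m_\lambda$ and of the $Z^\lambda$-bounds to obtain H\"ormander estimates that hold \emph{uniformly in $t$} and are square-summable (respectively $\rho$-variation summable) in $t$. The variational operator is the most delicate point, where controlling the $\rho$-variation of $\{f\#_\lambda\phi_t(x)\}_{t>0}$ by a Calder\'on--Zygmund kernel condition --- after isolating the harmless short-variation part --- is the technical core. Everything downstream of the sparse domination is soft and produces exactly the power $[w]_{A_p^\lambda}^{\max\{1/(p-1),1\}}$.
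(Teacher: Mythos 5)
Your overall architecture is the one the paper uses for $g_\phi^\lambda$ and $V_\rho^\lambda$: weak $(1,1)$ bounds for $T$ and for the grand maximal truncation operator in $((0,\infty),|\cdot|,m_\lambda)$, then pointwise sparse domination (Lorist's homogeneous-type, vector-valued version of Lerner--Ombrosi, \cite{Lori}), then the sharp weighted bound for sparse forms. For $\phi_*^\lambda$ you run the sparse machinery where the paper simply observes $\phi_*^\lambda(f)\leq C\mathcal M_\lambda(f)$ and invokes the quantitative bound for $\mathcal M_\lambda$; that is harmless, just heavier than necessary. For $g_\phi^\lambda$ your plan (vector-valued Calder\'on--Zygmund theory with $L^2((0,\infty),dt/t)$-valued kernel, $L^2$ input via Plancherel for $h_\lambda$, and control of the sharp truncation operator by $\mathcal M_\lambda$) is essentially the paper's proof.

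The genuine gap is in the variation operator. Your route is: prove $V_\rho^\lambda$ is bounded on $L^2((0,\infty),m_\lambda)$ by a Jones--Seeger--Wright short/long variation splitting, and then feed this a priori bound into vector-valued Calder\'on--Zygmund theory with a $v_\rho$-valued kernel. But that $L^2$ bound is exactly what is missing: the paper states explicitly that it cannot apply vector-valued CZ theory to $V_\rho^\lambda$ because no $L^q$ bound is known a priori for a general $\phi\in Z^\lambda$, and it builds the weak $(1,1)$ and $L^p$ bounds by hand instead (splitting the Hankel translation kernel, dominating several pieces by $\int_0^\infty|\partial_tK_t(x,y)|\,dt$ and by the Hardy-type operators $H_0^\lambda$, $H_\infty$ of \cite{BHNV}, and transferring the remaining local piece, via the even profile $\Phi(z)=\int_0^\infty u^{\lambda-1}\phi(\sqrt{z^2+u})\,du$, to the Euclidean variation operator of convolutions covered by \cite[Lemma 2.4]{CJRW1}). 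Your one-line appeal to ``JSW splitting with $\rho>2$'' does not supply this: the short-variation piece can indeed be dominated by a $g$-function of $t\partial_t(f\#_\lambda\phi_t)$, but the long (dyadic) variation requires a variational inequality for a reference family in the Bessel setting (e.g.\ the heat or Poisson semigroup, as in \cite{WYZ}) together with a square-function comparison between $\phi_{2^j}\#_\lambda f$ and that reference family, which in turn needs quantitative behaviour of $h_\lambda(\phi)$ near the origin and at infinity. None of these ingredients is formulated or verified in your proposal, and they constitute the technical core that the paper's Proposition on $V_\rho^\lambda$ replaces by its direct decomposition. You correctly identify the variation operator as the main obstacle, but as written the proof of Theorem \ref{Th1.1} for $V_\rho^\lambda$ is not complete; either supply the $L^2$ variational estimate along the lines above, or argue directly as the paper does.
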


\begin{thm}\label{Th1.2}
Let $1<p<\infty$, $\lambda>0$, $\rho >2$, $m\in\mathbb N\setminus\{0\}$ and $\phi\in Z^\lambda$. Suppose  that $w_1,w_2\in A_p^\lambda(0,\infty)$. If $T$ represents to $\phi_{*,b}^{\lambda,m}$, $g_{\phi,b}^{\lambda,m}$ or $V_{\rho,b}^{\lambda,m}$, then $T$ can be extended to a bounded operator from $L^p((0,\infty),w_1,m_\lambda)$ into $L^p((0,\infty),w_2,m_\lambda)$ and there exists $C>0$ that does not depend on $w_1$ neither $w_2$ such that
$$
\|T(f)\|_{L^p((0,\infty),w_2,m_\lambda)}\leq C\|b\|_{BMO((0,\infty),w,m_\lambda)}^m\left([w_1]_{A_p^\lambda}[w_2]_{A_p^\lambda}\right)^{\frac{m+1}{2}\max\{\frac{1}{p-1},1\}}\|f\|_{L^p((0,\infty),w_1,m_\lambda)},
$$
for every $f\in L^p((0,\infty),w_1,m_\lambda)$, provided that $b\in BMO((0,\infty),w,m_\lambda)$ where $w=\left(\frac{w_1}{w_2}\right)^{\frac{1}{mp}}$.
\end{thm}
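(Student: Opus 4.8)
The plan is to follow the sparse‑domination route already used for Theorem~\ref{Th1.1}, adding the bookkeeping needed to carry the symbol $b$. Recall that in the proof of Theorem~\ref{Th1.1} each base operator $T_0\in\{\phi_*^\lambda,g_\phi^\lambda,V_\rho^\lambda\}$ is shown to be pointwise controlled on the space of homogeneous type $((0,\infty),|\cdot|,m_\lambda)$ by sparse operators: for every $f$ in a fixed dense class there is a sparse family $\mathcal S=\mathcal S(f)$ (relative to $m_\lambda$ and one of the Hytönen--Kairema dyadic systems on $(0,\infty)$) with $T_0(f)(x)\le C\sum_{Q\in\mathcal S}\langle|f|\rangle_{Q,\lambda}\mathbf 1_Q(x)$, where $\langle h\rangle_{Q,\lambda}=\frac1{m_\lambda(Q)}\int_Q h\,dm_\lambda$; this rests on the size and Hörmander‑type regularity, guaranteed by (i)--(iii), of the ($\mathbb R$‑, $\ell^2$‑ and $\rho$‑variation‑valued) kernels attached to $\phi\in Z^\lambda$. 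For the commutators I would re‑run the \emph{same} stopping‑time construction keeping the factor $(b(\cdot)-b(x))^m$ inside, and on each stopping interval $Q$ use the binomial identity
\[
(b(y)-b(x))^m=\sum_{k=0}^m\binom{m}{k}(-1)^k\big(b(y)-b_{Q,\lambda}\big)^{m-k}\big(b(x)-b_{Q,\lambda}\big)^k,\qquad b_{Q,\lambda}=\langle b\rangle_{Q,\lambda},
\]
to obtain, for $T_b^m\in\{\phi_{*,b}^{\lambda,m},g_{\phi,b}^{\lambda,m},V_{\rho,b}^{\lambda,m}\}$ and $f$ in the dense class, a sparse bound
\[
T_b^m(f)(x)\le C\sum_{k=0}^m\binom{m}{k}\,\mathcal A^{k,m-k}_{\mathcal S,b}(f)(x),\qquad
\mathcal A^{k,m-k}_{\mathcal S,b}(f)(x):=\sum_{Q\in\mathcal S}|b(x)-b_{Q,\lambda}|^{k}\big\langle|b-b_{Q,\lambda}|^{m-k}|f|\big\rangle_{Q,\lambda}\,\mathbf 1_Q(x).
\]

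The second step is the quantitative weighted bound for these sparse commutator operators: with $w=(w_1/w_2)^{1/(mp)}$ and any $0\le k\le m$,
\[
\big\|\mathcal A^{k,m-k}_{\mathcal S,b}(f)\big\|_{L^p(w_2,m_\lambda)}\le C\,\|b\|_{BMO((0,\infty),w,\lambda)}^{m}\big([w_1]_{A_p^\lambda}[w_2]_{A_p^\lambda}\big)^{\frac{m+1}{2}\max\{1,\frac1{p-1}\}}\|f\|_{L^p(w_1,m_\lambda)}.
\]
This is the analogue on $((0,\infty),|\cdot|,m_\lambda)$ of the Bloom‑type estimate for iterated sparse commutators. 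By duality one passes to pairwise disjoint major subsets $E_Q\subset Q$ of the sparse intervals, removes the factors $|b-b_{Q,\lambda}|$ by distributing the total power $m$ between the two functions and controlling the resulting local averages of $|b-b_{Q,\lambda}|$ through the John--Nirenberg inequality in its weighted ($BMO_w$) form, splitting $w=w_1^{1/(mp)}w_2^{-1/(mp)}$ accordingly, and finally invoking the quantitative $A_p^\lambda$--$A_\infty^\lambda$ theory (reverse Hölder, Fefferman--Stein). Since a dyadic calculus à la Hytönen--Kairema, the John--Nirenberg inequality and the Muckenhoupt weight theory are all available on $((0,\infty),|\cdot|,m_\lambda)$ just as on $\mathbb R^n$, the Euclidean argument for iterated commutators transcribes and yields exactly the exponent $\frac{m+1}{2}\max\{1,\frac1{p-1}\}$. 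Combining with Step~1 and summing the $m+1$ terms gives the asserted inequality for $f$ in the dense class; as $w_1,w_2\in A_p^\lambda(0,\infty)$, a routine density argument then produces the bounded extension of $T_b^m$ from $L^p((0,\infty),w_1,m_\lambda)$ into $L^p((0,\infty),w_2,m_\lambda)$.

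The step I expect to be the main obstacle is Step~1: establishing the sparse domination of the commutator operators on $((0,\infty),|\cdot|,m_\lambda)$, and, inside it, verifying that the kernels of $g_\phi^\lambda$ and $V_\rho^\lambda$ (hence of their commutator versions after the binomial splitting) satisfy the Calderón--Zygmund size and smoothness estimates with respect to $m_\lambda$ using only $\phi\in Z^\lambda$. The $\rho$‑variation‑valued case is the most delicate, since one must control the $\rho$‑variation norm in $t$ of the kernels of $\{f\mapsto f\#_\lambda\phi_t\}_{t>0}$ and of their increments in the space variable, exploiting the decay and derivative bounds (i)--(iii) together with the properties of $_\lambda\tau_x$. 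Once that is in hand, Step~2 is a rather mechanical transcription of the known Euclidean proofs for iterated commutators, and Step~3 is standard.
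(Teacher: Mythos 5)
Your proposal is correct and follows essentially the same route as the paper: sparse domination of the iterated commutators on the space of homogeneous type $((0,\infty),|\cdot|,m_\lambda)$ (via the binomial splitting around cube averages) combined with Bloom-type quantitative two-weight bounds for the resulting sparse commutator forms, except that the paper does not re-derive this machinery but invokes the two-weight commutator theorems of Duong--Gong--Kuffner--Li--Wick--Yang in a vector-valued form (following Lorist's remarks), and concentrates precisely on the point you flag as the main obstacle: verifying the weak $(1,1)$ bounds, the $L^\infty$-, $L^2(\frac{dt}{t})$- and $v_\rho$-valued kernel size and smoothness estimates (for instance $\|\{\,_\lambda\tau_x(\phi_t)(y)\}_{t>0}\|_{v_\rho}\leq C/m_\lambda(B(x,|x-y|))$, obtained from $\partial_t\,_\lambda\tau_x(\phi_t)(y)=-t^{-1}\,_\lambda\tau_y(\Phi_t)(x)$ with $\Phi(x)=(2\lambda+1)\phi(x)+x\phi'(x)$), and the boundedness of the grand maximal truncation operators.
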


Suppose that $\frak m$ is a bounded measurable function on $(0,\infty)$. Then, $\frak m$ defines a $\Delta_\lambda$-spectral multiplier, $T_{\frak m}$ that is bounded from  $L^2((0,\infty),m_\lambda)$ into itself, by
$$T_{\frak m}(\Delta_\lambda)=\int_0^\infty \frak m(s)dE_\lambda(s),$$
where $E_\lambda$ denotes the spectral measure of $\Delta_\lambda$ in $L^2((0,\infty),m_\lambda)$. $T_{\frak m}$ can be written by using the Hankel transformation as follows
$$
T_{\frak m}(\Delta_\lambda)(f)=h_\lambda(\frak m (y^2)h_\lambda(f)),\quad f\in L^2((0,\infty),m_\lambda).
$$
In \cite{DPW} Dziuba\'nski, Preisner and Wr\'obel gave sufficient conditions on the function $\frak m$ that guarantee that the operator $T_{\frak m}(\Delta_\lambda)$ can be extended from $L^2((0,\infty),m_\lambda)\cap L^p((0,\infty),m_\lambda)$ to $L^p((0,\infty),m_\lambda)$ as a bounded operator from $L^p((0,\infty),m_\lambda)$ into itself when $1<p<\infty$ and from $L^1((0,\infty),m_\lambda)$ into $L^{1,\infty}((0,\infty),m_\lambda)$ when $p=1$, provided that $\lambda>1/2$.

The operator $\Delta_\lambda$ satisfies the conditions (A.1) and (A.2) in \cite{BD} (also in \cite{CSZ}) (see \cite[p. 7269]{BDL}). As in \cite[p. 880]{BD} and in \cite[p. 5]{CSZ} we consider a real valued and even function $\psi$ in the Schwartz space $S(\mathbb R)$  such that $\int_0^\infty\psi^2(s)\frac{ds}{s}<\infty$. We define the square function $G_{\psi,\lambda}$ as follows
$$
G_{\psi,\lambda}(f)(x)=\left(\int_0^\infty |T_{\psi_{(t)}}(\Delta_\lambda)f(x)|^2\frac{dt}{t}\right)^{1/2},\quad x\in (0,\infty).
$$
Here $\psi_{(t)}(x)=\psi(t\sqrt{x})$, $t,x\in (0,\infty)$. We can write
$$
G_{\psi,\lambda}(f)(x)=\frac{1}{\sqrt{2}}\left(\int_0^\infty |T_{\psi_{(\sqrt{t})}}(\Delta_\lambda)f(x)|^2\frac{dt}{t}\right)^{1/2},\quad x\in (0,\infty).
$$
We have that
\begin{align*} 
 T_{\psi_{(\sqrt{t})}}(\Delta_\lambda)(f) & =
h_\lambda(\psi(ty)h_\lambda(f)(y)) \\
& =h_\lambda(\psi(ty))\#_\lambda f \\
& =\phi_t\#_\lambda f,\quad f\in L^2((0,\infty),m_\lambda),
\end{align*}
where $\phi=h_\lambda(\psi)$. Since $\psi\in S(\mathbb R)$ is even, we have that $\phi=\widetilde{\phi}_{|(0,\infty)}$ where $\widetilde{\phi}$ is also even and belongs to $S(\mathbb R)$ (\cite[Satz 5 and p. 201]{Al}). Then, $\phi\in Z^\lambda$. Furthermore, we get
$$
G_{\psi,\lambda}=g_{\phi}^{\lambda}.
$$
Our result in Theorem \ref{Th1.1} for the square function extends the ones in \cite[Theorem 1.6]{BD} and in \cite[Theorem 2.3]{CSZ} for Bessel operators $\Delta_\lambda$. The method we use to prove Theorem \ref{Th1.1} is different than the one in the mentioned papers. Furthermore, we note that in our estimate the exponent of $[w]_{A_p}$ is $\max\{1,\frac{1}{p-1}\}$ while in \cite[Theorem 1.6]{BD} and \cite[Theorem 2.3]{CSZ} the corresponding exponent is $\max\{\frac{1}{2},\frac{1}{p-1}\}$.

An important spectral multiplier is that one named Bochner-Riesz multiplier. The $\alpha$-Bochner-Riesz multiplier associated with $\Delta_\lambda$ is defined by
$$
B_{\lambda,t}^\alpha(f)=h_\lambda\left(\left(1-\left(\frac{y}{t}\right)^2\right)_+^\alpha h_\lambda(f)\right),\quad t>0,$$
where $z_+=\max\{0,z\}$, $z\in\mathbb R$, and $\alpha >0$. Since (\cite[(7) p. 48]{EMOT})
$$
h_\lambda\left(2^\alpha\Gamma(\alpha +1)y^{-\alpha -\lambda -1/2}J_{\alpha+\lambda+1/2}(y)\right)(x)=(1-x^2)_+^\alpha,\quad x\in (0,\infty),$$
we have that
$$
B_{\lambda,t}^\alpha(f)=\phi_{1/t}^{\lambda ,\alpha}\#_\lambda f,\quad t>0,
$$
where $\phi ^{\lambda ,\alpha}(x)=2^\alpha\Gamma(\alpha +1)x^{-\alpha -\lambda -1/2}J_{\alpha+\lambda+1/2}(x)$, $x\in (0,\infty)$.

The Bochner-Riesz multiplier in the Bessel setting was studied in \cite{BC} and \cite{CTV}. 

According to \cite[(5.4.3) and (5.11.6)]{Leb} it follows that the function $x^{-\nu}J_\nu(x)$ is bounded in $(0,1)$ and the function $\sqrt{x}J_\nu(x)$ is bounded in $(1,\infty)$ when $\nu>-1/2$. Then,
$$
|\phi^{\lambda,\alpha}(x)|\leq C\left\{\begin{array}{l} 1,\;\;\;x\in (0,1), \\[0.2cm] x^{-\alpha-\lambda -1},\quad x\in (1,\infty) \end{array}\right.\leq\frac{C}{(1+x^2)^{\lambda +1}},\quad x\in (0,\infty),
$$
when $\alpha\geq \lambda +1$.

Since $\frac{d}{dx}(x^{-\nu}J_\nu(x))=x^{-\nu}J_{\nu +1}(x)$, $x\in (0,\infty)$ and $\nu >-1$ (\cite[(5.3.5)]{Leb}) we obtain
$$
(\phi^{\lambda,\alpha})'(x)=2^\alpha\Gamma(\alpha +1)x^{-\alpha -\lambda -1/2}J_{\alpha+\lambda+3/2}(x),\quad x\in (0,\infty).
$$
Thus, as above we obtain
$$
|(\phi^{\lambda,\alpha})'(x)|\leq C\frac{x}{(1+x^2)^{\lambda +2}},\quad x\in (0,\infty),
$$
when $\alpha\geq\lambda +2.$ In a similar way we get that
$$
|(\phi^{\lambda,\alpha})''(x)|\leq\frac{C}{(1+x^2)^{\lambda +2}},\quad x\in (0,\infty),
$$
when $\alpha\geq \lambda +3$.

We conclude that $\phi\in Z^\lambda$ provided that $\alpha\geq \lambda +3$.

It is clear that
\begin{align*} 
 & \phi_*^{\lambda.\alpha}(f)=\sup_{t>0}|B_{\lambda,t}^\alpha(f)|, \\
 &  \phi_{*,b}^{\lambda,\alpha,m}(f)(x)=\sup_{t>0}|B_{\lambda,t}^\alpha((b(\cdot)-b(x))^m f)(x)|,\quad x\in (0,\infty), \\
 & V_\rho^{\lambda,\alpha} (\{\phi_t^{\lambda,\alpha}\}_{t>0})(f)=V_\rho^\lambda(\{B_{\lambda,t}^\alpha\}_{t>0})(f),
\end{align*}
and
\begin{align*} 
 &  V_{\rho,b}^{\lambda,\alpha,m} (\{\phi_t^{\lambda,\alpha}\}_{t>0})(f)(x)=V_\rho^\lambda(\{B_{\lambda,t}^\alpha((b(\cdot)-b(x))^m f)(x)\}_{t>0}),\quad x\in (0,\infty).
\end{align*}

On the other hand, after a change of variable we get
$$
g_\psi^{\lambda,\alpha}(f)(x)=\left(\int_0^\infty|t\partial_tB_{\lambda,t}^\alpha(f)(x)|^2\frac{dt}{t}\right)^{1/2},\quad x\in (0,\infty),
$$
and
$$
g_{\psi,b}^{\lambda,\alpha,m}(f)(x)=\left(\int_0^\infty|t\partial_tB_{\lambda,t}^\alpha((b(\cdot)-b(x))^m f)(x)|^2\frac{dt}{t}\right)^{1/2},\quad x\in (0,\infty),
$$
where $\psi(x)=-(2\lambda +1)\phi^{\lambda,\alpha}(x)-x(\phi^{\lambda,\alpha})'(x)$, $x\in (0,\infty)$. Note that $\psi\in Z^\lambda$ provided that $\alpha\geq \lambda +4$.

The square function $g_\psi^\lambda$ is a Bessel version of the Stein square function introduced in \cite{Stein1}. Quantitative weighted $L^p$-inequalities for the classical Stein square function were obtained in \cite{CD}. In \cite{CDY} Chen, Duong and Yan established $L^p$-boundedness properties for Stein square function associated with operators in spaces of homogeneous type. The results in Theorem \ref{Th1.1} suppose an extension of \cite[Theorem 1.1]{CDY}.

In order to prove our results we see that the operators can be dominated by finite sums of sparse operators. This technique of controlling operators by sparse ones has been very useful in the last decade to get weighted norm inequalities. The sparse domination procedure was used at the first time by Lerner (\cite{Le1}) where an alternative proof of the $A_2$-theorem was shown. The $A_2$-theorem had been established by Hyt\"onen in \cite{Hy}. Sparse arguments have bee used to study quantitative $L^p$-weighted inequalities for several class of operators (see \cite{CPPV}, \cite{BD}, \cite{DGKLWY}, \cite{GRS}, \cite{LMR}, \cite{Le}, \cite{LeNa}, \cite{LO}, \cite{LOR}, \cite{Li}, \cite{LPRR}, \cite{Lori},  \cite{Pe} and the reference therein).

We recall the definition of systems of dyadic cubes. Let $\delta\in (0,1)$ and $0<a\leq A<\infty$. A system of dyadic cubes with parameters $(\delta,a,A)$ is a family $\displaystyle\mathcal D=\bigcup_{k\in\mathbb Z}\mathcal D_k$ where $\mathcal D_k=\{Q_\alpha^k\subset(0,\infty)\;\mbox{measurable},\;\alpha\in\mathcal A_k\}$ being $\mathcal A_k$ countable, for every $k\in\mathbb Z$, and the following properties are satisfied

\begin{enumerate}
\item[(i)] $Q_\alpha^k\cap Q_\beta^k=\emptyset$, $\alpha\neq\beta$, $\alpha,\beta\in\mathcal A_k$, $k\in\mathbb Z$ and $\displaystyle(0,\infty)=\bigcup_{\alpha\in\mathcal A_k}Q_\alpha^k$, $k\in\mathbb Z$.
\item[(ii)] If $\ell\geq k$, $\alpha\in\mathcal A_\ell$, $\beta\in\mathcal A_k$, then either $Q_\alpha^\ell\subseteq Q_\beta^k$ or $Q_\alpha^\ell\cap Q_\beta^k=\emptyset$.\newline
\item[(iii)] There exists $M\in\mathbb N$ such that  for every $k\in\mathbb Z$ and $\alpha\in\mathcal A_k$
$$
\mbox{card}\{\beta\in\mathcal A_{k+1}:\;Q_\beta^{k+1}\subseteq Q_\alpha^k\}\leq M,
$$
and $\displaystyle Q_\alpha^k=\bigcup_{Q\in\mathcal D_{k+1}, Q\subseteq Q_\alpha^k}Q$.
\item[(iv)] For every $k\in\mathbb Z$ and $\alpha\in\mathcal A_k$ there exists $x_\alpha^k\in (0,\infty)$ such that
$$
B(x_\alpha^k,a\delta^k)\subseteq Q_\alpha^k\subseteq B(x_\alpha^k,A\delta^k).
$$
Furthermore, if $\ell\geq k$ and $Q_\beta^\ell\subseteq Q_\alpha^k$, then $B(x_\beta^\ell,A\delta^\ell)\subseteq B(x_\alpha^k,A\delta^k)$.
\end{enumerate}
The set $Q_\alpha^k$ is usually named a dyadic cube of generation $k$ with center at $x_\alpha^k\in Q_\alpha^k$ and sidelength $\delta^k$.

Main properties of dyadic cubes in quasimetric spaces with the geometric doubling property can be found in \cite{HK} (see also \cite{Chr}).

Let $\mathcal D$ be a system of dyadic cubes and $0<\eta<1$. As in \cite{DGKLWY} we say that a collection $S\subset\mathcal D$ is $\eta$-sparse when  there exists $c\geq 1$ such that for every $Q\in S$ there exists a measurable set $E_Q\subset Q$ such that $m_\lambda(E_Q)\geq\eta m_\lambda(Q)$ and 
\begin{align}\label{1.1}
 \sum_{Q\in S}\mathcal X_{E_Q}(x)\leq c,\;\;\;\; x\in (0,\infty).
 \end{align}
 
 If $S\subset\mathcal D$ is $\eta$-sparse, the sparse operator $\mathfrak A_S$ is defined by
  $$
  \mathfrak A_S f(x)= \sum_{Q\in S} f_Q\mathcal X_{Q}(x),\quad x\in (0,\infty).
  $$
 Here, as above, $f_Q=\frac{1}{m_\lambda(Q)}\int_Q f(x)x^{2\lambda}dx$, $Q\in S$.
 
 Quantitative $L^p$-weighted estimates for sparse operators were established in \cite[Proposition 4.1]{Lori}.
 
 In order to use the sparse domination strategy we take into account that the operators that we study can be seen as Banach valued linear operators. Indeed, let $(E,\|\cdot\|_E)$ be a Banach space of measurable functions defined in $(0,\infty)$. We define
  $$
  \mathcal P_{\phi;E}^\lambda(f)(x)=\|f\#_\lambda\phi_{\centerdot}(x)\|_E,\quad x\in (0,\infty),
  $$
 and
  $$
  \mathcal C_{\phi,b;E}^{\lambda,m}(f)(x)=\|[(b(\cdot)-b(x))^m f]\#_\lambda\phi_{\centerdot}(x)\|_E,\quad x\in (0,\infty).
  $$
 we have that
  \begin{enumerate}
     \item[(i)] $\phi_*^\lambda=\mathcal P_{\phi;L^\infty(0,\infty)}^\lambda$ and  $\phi_{*,b}^{\lambda,m}=\mathcal C_{\phi,b;L^\infty(0,\infty)}^{\lambda,m}$.\newline
     \item[(ii)] $g_\phi^\lambda=\mathcal P_{\phi;L^2((0,\infty),\frac{dt}{t})}^\lambda$ and  $g_{\phi,b}^{\lambda,m}=\mathcal C_{\phi,b;L^2((0,\infty),\frac{dt}{t})}^{\lambda,m}$.\newline
     \item[(iii)] We define $v_\rho$ as the space of measurable functions $g:(0,\infty)\rightarrow\mathbb C$ such that
     $$
     \|g\|_{v_\rho}=\sup_{\substack{0<t_n<t_{n-1}<...<t_1 \\ n\in\mathbb N }}\left(\sum_{k=1}^{n-1}|g(t_k)-g(t_{k+1})|^\rho\right)^{1/\rho}<\infty.
     $$
     By identifying the constant functions the space $(v_\rho,\|\cdot\|_{v_\rho})$ is Banach. Furthermore,
     $$
     V_\rho^\lambda(\{\phi_t\}_{t>0})=\mathcal P_{\phi;v_\rho}^\lambda\;\;\mbox{and}\;\;V_{\rho,b}^{\lambda,m}(\{\phi_t\}_{t>0})=\mathcal C_{\phi,b;v_\rho}^{\lambda,m}.
     $$
  \end{enumerate}
 In the proof of Theorems \ref{Th1.1} and \ref{Th1.2}, that we write in the next sections, the results in \cite{DGKLWY} and \cite{Lori} concerning to sparse domination on spaces of homogeneous type in a vector valued setting play an important role.
 
 Throughout this paper $C$ will always represents a positive constant that can change in each occurrence.

\section{Proof of Theorem \ref{Th1.1}}\label{S2}

In \cite{Le} a sparse domination principle was given for an operator $T$ by using the grand maximal truncated operator $\mathcal M_T$ of $T$ defined by
$$
\mathcal M_T(f)(x)=\sup_{Q\ni x}\|T(f\mathcal X_{\mathbb R^n\setminus(3Q)})\|_{L^\infty(Q)},\quad X\in\mathbb R^n,
$$
that can be established as follows.

\begin{thm}\label{Th2.1}(\cite[Theorem 4.2]{Le}).
Let $1\leq q\leq r<\infty$. Assume that $T$ is a sublinear operator that is bounded from $L^q(\mathbb R^n)$ into $L^{q,\infty}(\mathbb R^n)$ and such that $\mathcal M_T$ is  bounded from $L^r(\mathbb R^n)$ into $L^{r,\infty}(\mathbb R^n)$. Then, for every compactly supported $f\in L^r(\mathbb R^n)$, there exists a sparse family $S$ such that
$$
|Tf(x)|\leq C\sum_{Q\in S}\langle f\rangle_{r,Q}\mathcal X_Q(x),\quad \mbox{a.e. }x\in\mathbb R^n,
$$
where $C=C_{n,q,r}\left(\|T\|_{L^q(\mathbb R^n)\rightarrow L^{q,\infty}(\mathbb R^n)}+\|\mathcal M_T\|_{L^r(\mathbb R^n)\rightarrow L^{r,\infty}(\mathbb R^n)}\right)$. 

Here, $\langle f\rangle_{r,Q}=\left(\frac{1}{|Q|}\int_Q |f(x)|^r dx\right)^{1/r}$, $Q\in S$.
\end{thm}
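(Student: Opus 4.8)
The natural approach is Lerner's stopping‑time (Calderón–Zygmund) scheme organised around the grand maximal truncated operator $\mathcal M_T$. By a standard argument (decomposing $\Rn$ with respect to finitely many shifted dyadic systems and using that $\supp f$ is bounded), it suffices to prove the following local statement: for every cube $Q_0\subset\Rn$ there is a $\tfrac12$‑sparse family $\mathcal F$ of dyadic subcubes of $Q_0$ such that, with $A:=C_{n,q,r}\big(\|T\|_{L^q\to L^{q,\infty}}+\|\mathcal M_T\|_{L^r\to L^{r,\infty}}\big)$,
\[
|T(f\mathcal X_{3Q_0})(x)|\le A\sum_{Q\in\mathcal F}\langle f\rangle_{r,3Q}\,\mathcal X_Q(x),\qquad \text{a.e. }x\in Q_0 .
\]
Once $Q_0\supseteq\supp f$ the left‑hand side equals $|Tf(x)|$ on $Q_0$, and patching such cubes recovers the global bound.

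The local estimate is obtained by iterating one recursion step. Fix $Q_0$, put $a:=\langle f\rangle_{r,3Q_0}$, and set
\[
\Omega:=\{x\in Q_0: |T(f\mathcal X_{3Q_0})(x)|>A a\}\cup\{x\in Q_0:\mathcal M_T(f\mathcal X_{3Q_0})(x)>A a\}.
\]
Chebyshev's inequality combined with the weak $(q,q)$ bound for $T$ (using Hölder's inequality and $q\le r$ to replace the $L^q$‑average of $f$ over $3Q_0$ by its $L^r$‑average) and the weak $(r,r)$ bound for $\mathcal M_T$ yield $|\Omega|\le 2^{-n-2}|Q_0|$ once the dimensional constant in $A$ is taken large enough; this is exactly where the constant in the theorem comes from. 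A Calderón–Zygmund stopping‑time decomposition of $\Omega$ relative to $Q_0$ at height $2^{-n-1}$ produces pairwise disjoint maximal dyadic cubes $P_j\subseteq Q_0$ with
\[
2^{-n-1}|P_j|<|\Omega\cap P_j|\le\tfrac12|P_j|,\qquad \Big|\Omega\setminus\bigcup_j P_j\Big|=0 ;
\]
in particular $\sum_j|P_j|\le 2|\Omega|\le\tfrac12|Q_0|$, and each $P_j$ contains a point $\xi_j\notin\Omega$.

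Now the pointwise inequality. For a.e.\ $x\in Q_0\setminus\bigcup_jP_j$ one has $x\notin\Omega$, hence $|T(f\mathcal X_{3Q_0})(x)|\le Aa$. For $x\in P_j$ split $f\mathcal X_{3Q_0}=f\mathcal X_{3P_j}+f\mathcal X_{3Q_0\setminus 3P_j}$ (legitimate since $P_j\subseteq Q_0$ forces $3P_j\subseteq 3Q_0$) and use sublinearity of $T$ to get $|T(f\mathcal X_{3Q_0})(x)|\le |T(f\mathcal X_{3P_j})(x)|+|T(f\mathcal X_{3Q_0\setminus 3P_j})(x)|$. The second term is dominated on the whole of $P_j$ by the value of $\mathcal M_T$ at the good point $\xi_j$: choosing the cube $P_j\ni\xi_j$ in the supremum defining $\mathcal M_T$ and noting $f\mathcal X_{3Q_0\setminus 3P_j}=(f\mathcal X_{3Q_0})\mathcal X_{\Rn\setminus 3P_j}$ gives $\|T(f\mathcal X_{3Q_0\setminus 3P_j})\|_{L^\infty(P_j)}\le \mathcal M_T(f\mathcal X_{3Q_0})(\xi_j)\le Aa$. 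Therefore, for a.e.\ $x\in Q_0$,
\[
|T(f\mathcal X_{3Q_0})(x)|\le Aa\,\mathcal X_{Q_0}(x)+\sum_j|T(f\mathcal X_{3P_j})(x)|\,\mathcal X_{P_j}(x).
\]

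Finally, apply the last inequality again with each $P_j$ in place of $Q_0$ (and $f\mathcal X_{3P_j}$ in place of $f\mathcal X_{3Q_0}$), iterate, and let $\mathcal F$ be the collection of all cubes produced, $Q_0$ included. At the $k$‑th generation the selected cubes have total measure at most $2^{-k}|Q_0|$, so the union of the generation‑$k$ cubes is, in the limit, null; for $x$ outside that null set the error term vanishes for $k$ large and the local estimate follows. The family $\mathcal F$ is $\tfrac12$‑sparse: setting $E_Q:=Q\setminus\bigcup\{P\in\mathcal F:\ P\text{ a next‑generation cube of }Q\}$ one has $|E_Q|\ge\tfrac12|Q|$ from $\sum_j|P_j|\le\tfrac12|Q_0|$, and the sets $E_Q$ are pairwise disjoint. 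I expect the only delicate point to be the pointwise estimate above: the passage from ``$\mathcal M_T$ small at the single point $\xi_j$'' to ``the tail $T(f\mathcal X_{3Q_0\setminus 3P_j})$ small on all of $P_j$'' is precisely what forces the hypothesis to be stated for $\mathcal M_T$ (rather than, say, a Hardy–Littlewood maximal function of $Tf$), and tuning the threshold $A$ so that $|\Omega|<2^{-n-2}|Q_0|$ with a constant of the advertised form is the other point needing care; everything else is routine.
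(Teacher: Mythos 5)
Your argument is correct and is essentially the proof of the cited result itself: the paper states Theorem \ref{Th2.1} as a quotation of \cite[Theorem 4.2]{Le} without reproducing its proof, and your local stopping-time recursion (the exceptional set $\Omega$ built from the weak $(q,q)$ bound for $T$ and the weak $(r,r)$ bound for $\mathcal M_T$, the Calder\'on--Zygmund selection of the cubes $P_j$, the use of a point $\xi_j\in P_j\setminus\Omega$ to control $T(f\mathcal X_{3Q_0\setminus 3P_j})$ on all of $P_j$, and the iteration giving a $\tfrac12$-sparse family) is exactly Lerner's argument. The only blemishes are cosmetic: $\sum_j|P_j|$ is bounded by $2^{n+1}|\Omega|$ rather than $2|\Omega|$ (still at most $\tfrac12|Q_0|$ with your threshold), the global patching should use a partition of $\mathbb R^n$ into cubes $Q_j$ with $3Q_j\supseteq\supp f$ (not cubes containing $\supp f$), and your bound with averages $\langle f\rangle_{r,3Q}$ gives the stated one with $\langle f\rangle_{r,Q}$ after replacing each $Q$ by $3Q$, which remains a sparse family.
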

In \cite{LO} Theorem \ref{Th2.1} was improved by weakening some of the hypotheses. The operator $\mathcal M_T$ is replaced by the one $\mathcal M_{T,\alpha}^\#$ defined for every $\alpha >0$ by
$$
\mathcal M_{T,\alpha}^\#(f)(x)=\sup_{Q\ni x}\;\esssup_{x',x''\in Q}|T(f\mathcal X_{\mathbb  R^n\setminus(\alpha Q)})(x')-T(f\mathcal X_{\mathbb R^n\setminus(\alpha Q)})(x'')|,\quad x\in\mathbb R^n,
$$
(see \cite[Theorem 1.1]{LO}). 

Note that the operators $\mathcal M_T$ and $\mathcal M_{T,3}^\#$ are connected as follows. Let $Q$ be a cube in $\mathbb R^n$ and $x\in Q$. We have that
\begin{align*}
|T(f\mathcal X_{\mathbb  R^n\setminus(3 Q)})(y)| & \leq |T(f\mathcal X_{\mathbb  R^n\setminus(3 Q)})(y)-T(f\mathcal X_{\mathbb R^n\setminus(3 Q)})(z)| \\
& + |Tf(z)|+|T(f\mathcal X_{3Q})(z)|, \;\;\;\;y,z\in Q.
\end{align*}
Then,
\begin{align*}
|T(f\mathcal X_{\mathbb  R^n\setminus (3 Q)})(y)| & \leq \esssup_{y,z\in Q}|T(f\mathcal X_{\mathbb  R^n\setminus(3 Q)})(y)-T(f\mathcal X_{\mathbb R^n\setminus(3 Q)})(z)| \\
& + \inf_{z\in Q}\left(|Tf(z)|+|T(f\mathcal X_{3Q})(z)|\right) \\
& \leq\esssup_{y,z\in Q}|T(f\mathcal X_{\mathbb  R^n\setminus(3 Q)})(y)-T(f\mathcal X_{\mathbb R^n\setminus(3 Q)})(z)| \\
& + C\left[\left(\frac{1}{|Q|}\int_Q |T(f\mathcal X_{3Q})(z)|^{1/2}dz\right)^2+\left(\frac{1}{|Q|}\int_Q |Tf(z)|^{1/2}dz\right)^2\right] \\
& \leq\esssup_{y,z\in Q}|T(f\mathcal X_{\mathbb  R^n\setminus(3 Q)})(y)-T(f\mathcal X_{\mathbb R^n\setminus(3 Q)})(z)| \\
& + C\left[\left(\mathcal M[(T(f\mathcal X_{3Q}))^{1/2}]\right)^2(x)+\left(\mathcal M[(Tf)]^{1/2}\right)^2(x)\right],
\end{align*}
where $\mathcal M$ denotes the Hardy-Littlewood maximal function. We get
$$
\mathcal M_T(f)(x)\leq C\left(\mathcal M_{T,3}^\#(f)(x)+\left(\mathcal M[(Tf)]^{1/2})(x)\right)^2+\left(\mathcal M[(T(f\mathcal X_{3Q}))^{1/2}](x)\right)^2\right).
$$
Suppose that $T$ is bounded from $L^1(\mathbb R^n)$ into $L^{1,\infty}(\mathbb R^n)$. By using Kolmogorov inequality (\cite[p. 91]{Gra}) we deduce that
$$
\mathcal M_T(f)(x)\leq C\left(\mathcal M_{T,3}^\#(f)(x)+\left(\mathcal M(|Tf|^{1/2})(x)\right)^2+\mathcal M(|f|)(x)\right).
$$
By using \cite[Lemma 3.2]{NTV} we conclude that $\mathcal M_T$ is bounded from $L^1(\mathbb R^n)$ into $L^{1,\infty}(\mathbb R^n)$ provided that $\mathcal M_{T,3}^\#$  is bounded from $L^1(\mathbb R^n)$ into $L^{1,\infty}(\mathbb R^n)$.

\noindent On the other hand it is clear that $\mathcal M_{T,3}^\#$ is bounded from $L^1(\mathbb R^n)$ into $L^{1,\infty}(\mathbb R^n)$ when $\mathcal M_T$ is bounded from $L^1(\mathbb R^n)$ into $L^{1,\infty}(\mathbb{R}^n)$.

In our Bessel contexts we consider the spaces of homogeneous type $((0,\infty),|\cdot|,m_\lambda)$. A version of Theorem \ref{Th2.1} for spaces of homogeneous type in vector valued settings was established in \cite[Theorem 1.1]{Lori}. We will prove that the operators under consideration in Theorem \ref{Th1.1} are bounded from $L^1((0,\infty),m_\lambda)$ into $L^{1,\infty}((0,\infty),m_\lambda)$. Then the above comments concerning to $\mathcal M_T$ and  $\mathcal M_{T,3}^\#$ also works in the $((0,\infty),|\cdot|,m_\lambda)$ context.

In the sequel for each $x,r \in (0,\infty)$, we write $B(x,r)$ to denote the interval $(x-r,x+r) \cap (0,\infty)$,  and $cB(x,r)=B(x,cr)$, $c>0$.

\subsection{Proof of Theorem \ref{Th1.1} for $\phi_*^\lambda$}\label{S2.1}

Assume That $\phi$ satisfies (i) (see the definition of $Z^\lambda$). According to \cite[(2.4)]{YY} we have that
$$
|_\lambda\tau_x(\phi_t)(y)|\leq\frac{C}{m_\lambda(B(x,t))+m_\lambda(B(x,|x-y|))}\frac{t}{t+|x-y|},\quad t,x,y\in (0,\infty).
$$
Then,
\begin{align*}
|(f\#_\lambda\phi)(x)| & \leq \sum_{k=1}^\infty\int_{B(x,2^kt)\setminus B(x,2^{k-1}t)}|_\lambda\tau_x(\phi_t)(y)||f(y)|y^{2\lambda}dy \\
& +\int_{B(x,t)}|_\lambda\tau_x(\phi_t)(y)||f(y)|y^{2\lambda}dy \\
& \leq C\sum_{k=0}^\infty \frac{1}{m_\lambda(B(x,2^kt))}\frac{1}{2^k}\int_{B(x,2^kt)}|f(y)|y^{2\lambda}dy \\
& \leq C\mathcal M_\lambda(f)(x),\;\;\;\;t,x\in (0,\infty).
\end{align*}
Here $\mathcal M_\lambda$ represents the corresponding Hardy-Littlewood maximal operator defined by
$$
\mathcal M_\lambda(f)(x)=\sup_{r>0}\frac{1}{m_\lambda(B(x,r))}\int_{B(x,r)}|f(y)|y^{2\lambda}dy,\quad f\in L^1_{loc}((0,\infty),m_\lambda).
$$
Hence,
$$
\phi_*^\lambda(f)(x)\leq C\mathcal M_\lambda(f)(x),\quad x\in (0,\infty).
$$
Since
$$
\|\mathcal M_\lambda(f)\|_{L^p((0,\infty),w,m_\lambda)}\leq C[w]_{A_p}^{\frac{1}{p-1}}\|f\|_{L^p((0,\infty),w,m_\lambda)},
$$
for every $f\in L^p((0,\infty),w,m_\lambda)$ (see, for instance, \cite[p. 890]{BD}) we conclude the proof of Theorem \ref{Th1.1} for $\phi_*^\lambda$. Note that here it is sufficient to assume that $\phi$ satisfies (i).

\subsection{Proof of Theorem \ref{Th1.1} for $g_\phi ^\lambda$}\label{S2.2}
We firstly establish $L^p$-boundedness properties for the sublinear operator $g^\lambda_\phi$.
\begin{prop} \label{propo 2.1}
Let $\lambda >0$. Suppose that $\phi$ is a differentiable function on $(0,\infty)$ satisfying the following three conditions:
\begin{enumerate}
\item[(a)] $|\phi(x)| \leq C(1+x^2)^{-\lambda -1}$, $x \in (0,\infty)$;
\item[(b)] $\displaystyle \int_0^\infty \left|u^{2\lambda +2}\phi'(u)\right|^2 \frac{du}{u} <\infty$;
\item[(c)] $\displaystyle \int_0^\infty \left|h_\lambda (\phi)(u)\right|^2\frac{du}{u} < \infty$.
\end{enumerate}
Then, the operator $g_\phi^\lambda$ is bounded from $L^p((0,\infty), m_\lambda)$ into itself, for every $1<p<\infty$, and from $L^1((0,\infty),m_\lambda)$ into $L^{1,\infty}((0,\infty),m_\lambda)$.
\end{prop}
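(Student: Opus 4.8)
\emph{Strategy.} I would view $g_\phi^\lambda$ as the $L^2((0,\infty),\frac{dt}{t})$-norm of the Hilbert-space valued linear operator $Tf(x)=\{(f\#_\lambda\phi_t)(x)\}_{t>0}$ on the space of homogeneous type $((0,\infty),|\cdot|,m_\lambda)$, and then invoke the standard theory of vector-valued Calder\'on--Zygmund operators. The operator $T$ has the $L^2(\tfrac{dt}{t})$-valued kernel $\mathbf K(x,y)=\{{}_\lambda\tau_x(\phi_t)(y)\}_{t>0}$ with respect to $m_\lambda$. Before anything I would record what (a), (b), (c) give: (a) implies $\phi\in L^1(m_\lambda)\cap L^2(m_\lambda)$ and $\int_0^\infty|u^{2\lambda+1}\phi(u)|^2\tfrac{du}{u}<\infty$; (b) is the finiteness of $\int_0^\infty|u^{2\lambda+2}\phi'(u)|^2\tfrac{du}{u}$; (c) is the finiteness of $\int_0^\infty|h_\lambda\phi(u)|^2\tfrac{du}{u}$. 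I would also record two elementary geometric estimates, consequences of the doubling of $m_\lambda$: with $R=R(x,w,\theta):=(x^2+w^2-2xw\cos\theta)^{1/2}$,
$$\int_0^\pi R^{-2\lambda-1}\sin^{2\lambda-1}\theta\,d\theta\le\frac{C}{m_\lambda(B(x,|x-w|))},\qquad \int_0^\pi R^{-2\lambda-2}\sin^{2\lambda-1}\theta\,d\theta\le\frac{C}{|x-w|\,m_\lambda(B(x,|x-w|))}.$$

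\emph{$L^2$-boundedness.} By Tonelli and the Plancherel identity for $h_\lambda$, together with $h_\lambda(f\#_\lambda\phi_t)(\xi)=h_\lambda f(\xi)\,h_\lambda\phi(t\xi)$ and the change of variables $s=t\xi$, one obtains
$$\|g_\phi^\lambda f\|_{L^2(m_\lambda)}^2=\int_0^\infty|h_\lambda f(\xi)|^2\Big(\int_0^\infty|h_\lambda\phi(t\xi)|^2\tfrac{dt}{t}\Big)\xi^{2\lambda}\,d\xi=\Big(\int_0^\infty|h_\lambda\phi(s)|^2\tfrac{ds}{s}\Big)\,\|f\|_{L^2(m_\lambda)}^2,$$
which is finite by (c); equivalently, $T$ is bounded from $L^2(m_\lambda)$ into $L^2(m_\lambda;L^2(\tfrac{dt}{t}))$.

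\emph{Kernel estimates.} For the size bound I would start from ${}_\lambda\tau_x(\phi_t)(y)=c_\lambda\,t^{-2\lambda-1}\int_0^\pi\phi(R/t)\sin^{2\lambda-1}\theta\,d\theta$ and apply the Cauchy--Schwarz inequality in $\theta$ with respect to $\sin^{2\lambda-1}\theta\,d\theta$ \emph{after inserting the weight $R^{\lambda+1/2}$} (writing $\phi(R/t)=[\phi(R/t)R^{\lambda+1/2}]\,R^{-\lambda-1/2}$); then Tonelli in $(\theta,t)$ and the substitution $u=R/t$ turn the $t$-integration into the constant $\int_0^\infty|u^{2\lambda+1}\phi(u)|^2\tfrac{du}{u}$ and leave $\big(\int_0^\pi R^{-2\lambda-1}\sin^{2\lambda-1}\theta\,d\theta\big)^2$, whence $\|\mathbf K(x,y)\|_{L^2(\frac{dt}{t})}\le C/m_\lambda(B(x,|x-y|))$ by the first geometric estimate (alternatively one may quote \cite[(2.4)]{YY}). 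For the regularity bound I would write, for $w$ between $z$ and $y$,
$${}_\lambda\tau_x(\phi_t)(y)-{}_\lambda\tau_x(\phi_t)(z)=\int_z^y\partial_w\big({}_\lambda\tau_x(\phi_t)(w)\big)\,dw,\qquad \partial_w\big({}_\lambda\tau_x(\phi_t)(w)\big)=c_\lambda\,t^{-2\lambda-2}\int_0^\pi\phi'(R/t)\,\frac{w-x\cos\theta}{R}\,\sin^{2\lambda-1}\theta\,d\theta,$$
note that $|w-x\cos\theta|\le R$, and apply Cauchy--Schwarz in $\theta$ \emph{after inserting the weight $R^{\lambda+1}$} (writing $\phi'(R/t)\tfrac{w-x\cos\theta}{R}=[\phi'(R/t)R^{\lambda+1}]\,\tfrac{w-x\cos\theta}{R^{\lambda+2}}$); Tonelli and $u=R/t$ now turn the $t$-integration into the constant $\int_0^\infty|u^{2\lambda+2}\phi'(u)|^2\tfrac{du}{u}$ (this is precisely where (b) enters) and leave $\big(\int_0^\pi R^{-2\lambda-2}\sin^{2\lambda-1}\theta\,d\theta\big)^2$, so the second geometric estimate gives $\|\partial_w({}_\lambda\tau_x(\phi_\centerdot)(w))\|_{L^2(\frac{dt}{t})}\le C/(|x-w|\,m_\lambda(B(x,|x-w|)))$. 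Since $|x-w|$ is comparable to $|x-y|$ for $w$ between $z$ and $y$ whenever $|x-y|\ge 2|y-z|$, Minkowski's integral inequality then yields $\|\mathbf K(x,y)-\mathbf K(x,z)\|_{L^2(\frac{dt}{t})}\le C\,|y-z|/(|x-y|\,m_\lambda(B(x,|x-y|)))$.

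\emph{Conclusion and main difficulty.} The three properties above say exactly that $T$ is a Hilbert-space valued Calder\'on--Zygmund operator on $((0,\infty),|\cdot|,m_\lambda)$: summing the regularity estimate over dyadic annuli yields the integrated H\"ormander condition, so the Calder\'on--Zygmund decomposition gives that $T$ maps $L^1(m_\lambda)$ into $L^{1,\infty}(m_\lambda;L^2(\tfrac{dt}{t}))$, and Marcinkiewicz interpolation together with the usual duality argument for vector-valued singular integrals (which applies here because ${}_\lambda\tau_x(\phi_t)(y)$ is symmetric in $x,y$, so the kernel is also smooth in $x$) gives boundedness $L^p(m_\lambda)\to L^p(m_\lambda;L^2(\tfrac{dt}{t}))$ for all $1<p<\infty$; taking $L^2(\tfrac{dt}{t})$-norms gives the statement for $g_\phi^\lambda$. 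I expect the regularity estimate for $\mathbf K$ to be the only real obstacle: since (a) and (b) are integrated (square-function) conditions rather than pointwise decay, no pointwise bound on $\phi_t'$ is available, and the point is to choose the weight $R^{\lambda+1}$ in the Cauchy--Schwarz step so that, after the scaling substitution, exactly the integral in (b) is produced while the residual angular integrals collapse, via the doubling of $m_\lambda$, to reciprocals of measures of balls.
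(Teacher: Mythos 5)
Your proposal is correct and follows essentially the same route as the paper: view $g_\phi^\lambda$ as the $L^2((0,\infty),\frac{dt}{t})$-norm of a vector-valued operator, get $L^2$-boundedness from Plancherel for $h_\lambda$ via (c), derive the size bound $\|\mathbf K(x,y)\|_{L^2(\frac{dt}{t})}\le C/m_\lambda(B(x,|x-y|))$ from (a) and the gradient bound $C/(|x-y|m_\lambda(B(x,|x-y|)))$ from (b) using the same scaling substitution $u=R/t$ and the angular integral estimate of \cite[Lemma 3.1]{BCN}, and conclude by vector-valued Calder\'on--Zygmund theory \cite{RRT}. The only differences are cosmetic: the paper uses Minkowski's integral inequality in $\theta$ where you use Cauchy--Schwarz with an inserted weight, and it additionally carries out the (routine) measurability and truncation argument with $L^2((\frac{1}{N},N),\frac{dt}{t})$ followed by monotone convergence, which your sketch omits.
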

\begin{proof}
We have that
$$
g^\lambda_\phi(f)(x) =\|f \#_\lambda \phi_\centerdot(x)\|_{L^2((0,\infty),\frac{dt}{t})}, \quad x \in (0,\infty).
$$
In order to see the $L^p$-boundedness properties for $g^\lambda_\phi$ we use vector valued singular integral theory (\cite{RRT}).

We define
$$
\mathfrak{G}^\lambda_{\phi,t}(f)(x)= \int_0^\infty \mathfrak{G}^\lambda_{\phi,t}(x,y) f(y)y^{2\lambda}dy,\quad  x,t \in (0,\infty),
$$
where $\mathfrak{G}^\lambda_{\phi,t}(x,y)=\,_\lambda\tau_x (\phi_t)(y)$, $x,y,t \in (0,\infty)$.

By using Minkowski inequality we get, for every $x,y\in (0,\infty)$
$$
\left\|\mathfrak{G}^\lambda_{\phi,\centerdot}(x,y)\right\|_{L^2\left((0,\infty),\frac{dt}{t}\right)} \leq C \int_0^\pi \sin^{2\lambda-1}\theta\left(\int_0^\infty \left|\frac{1}{t^{2\lambda+1}}\phi\left(\frac{\sqrt{x^2+y^2-2xy\cos\theta}}{t}\right)\right|^2 \frac{dt}{t}\right)^{\frac{1}{2}} d\theta . 
$$
We have that
\begin{eqnarray*}
\int_0^\infty \left|\frac{1}{t^{2\lambda+1}}\phi\left(\frac{\sqrt{x^2+y^2-2xy\cos\theta}}{t}\right)\right|^2\frac{dt}{t}&=& \frac{1}{(x^2+y^2-2xy\cos\theta)^{2\lambda+1}}\int_0^\infty |u^{2\lambda +1}\phi(u)|^2 \frac{du}{u} \\
&\hspace{-2cm}\leq & \hspace{-1cm}\frac{C}{(x^2+y^2-2xy\cos\theta)^{2\lambda +1}}, \quad x,y \in (0,\infty),\,\theta\in (0,\pi),
\end{eqnarray*}
because $\phi$ satisfies (a).

According to \cite[Lemma 3.1]{BCN} we obtain
\begin{eqnarray}\label{2.1}
\left\| \mathfrak{G}^\lambda_{\phi,\centerdot}(x,y)\right\|_{L^2((0,\infty),\frac{dt}{t})} &\leq& C\int^\pi_0 \frac{\sin^{2\lambda -1}\theta}{(x^2+y^2-2xy\cos\theta)^{\lambda+1/2}}d\theta\nonumber\\
& \leq &\frac{C}{m_\lambda(B(x,|x-y|))}, \quad x,y \in(0,\infty),\;x \not=y.
\end{eqnarray}

We can write for each $x,y,t \in (0,\infty)$,
$$
\partial_x \mathfrak{G}^\lambda_{\phi,t}(x,y)=\frac{\Gamma(\lambda + \frac{1}{2})}{\sqrt\pi \Gamma(\lambda)t^{2\lambda+1}}\int_0^\pi \phi'\left( \frac{\sqrt{x^2+y^2-2xy\cos \theta}}{t}\right)\frac{(x-y\cos\theta)\sin^{2\lambda-1}\theta}{t\sqrt{x^2+y^2-2xy\cos\theta}}d\theta.
$$
Since $|a+bs| \leq (a^2+b^2+2abs)^{1/2}$, $a,b \in (0,\infty)$ and $s \in (-1,1)$, it follows that
$$
\left|\partial_x\mathfrak{G}^\lambda_{\phi,t}(x,y)\right|\leq \frac{C}{t^{2\lambda +2}}\int_0^\pi \left|\phi'\left( \frac{\sqrt{x^2+y^2-2xy\cos \theta}}{t}\right)\right|\sin^{2\lambda -1}\theta d\theta,\quad x,y,t\in (0,\infty).
$$
By proceeding as above by using (b), we get
\begin{align*}
\left\| \partial_x \mathfrak{G}_{\phi,\centerdot}(x,y)\right\|_{L^2((0,\infty),\frac{dt}{t})} &\\
&\hspace{-1cm}\leq  C\int_0^\pi\sin^{2\lambda -1}\theta\left( \int_0^\infty \left|\frac{1}{t^{2\lambda+2}}\phi'\left(\frac{\sqrt{x^2+y^2-2xy\cos \theta}}{t}\right)\right|^2\frac{dt}{t}\right)^{\frac{1}{2}}d\theta\\
&\hspace{-1cm}\leq  C\int_0^\pi \frac{\sin^{2\lambda-1}\theta}{(x^2+y^2-2xy\cos \theta)^{\lambda+1}}\left(\int_0^\infty |u^{2\lambda+2}\phi'(u) |^2\frac{du}{u}\right)^{\frac{1}{2}}\\
&\hspace{-1cm}\leq  C\int_0^\pi \frac{\sin^{2\lambda-1}\theta}{(x^2+y^2-2xy\cos \theta)^{\lambda+1}}d\theta,\quad x,y \in (0,\infty).
\end{align*}
According to \cite[Lemma 3.1]{BCN} we obtain
\begin{equation}\label{2.2}
\left\|\partial_x \mathfrak{G}^\lambda_{\phi,\centerdot}(x,y)\right\|_{L^2((0,\infty), \frac{dt}{t})} \leq  \frac{C}{|x-y|m_\lambda(B(x,|x-y|))},\quad x,y\in (0,\infty),\,x\not=y.
\end{equation}

By symmetry we also get
\begin{equation}\label{2.3}
\left\|\partial_y \mathfrak{G}^\lambda_{\phi,\centerdot}(x,y)\right\|_{L^2((0,\infty), \frac{dt}{t})} \leq  \frac{C}{|x-y|m_\lambda(B(x,|x-y|))},\quad x,y\in (0,\infty),\,x\not=y.
\end{equation}
Let $N\in \mathbb{N}$. We consider for every smooth function $f$ with compact support in $(0,\infty )$ 
\begin{equation}\label{2.4}
\mathcal{G}^\lambda_{\phi,N}(f)(x) = \int_0^\infty \mathfrak{G}^\lambda_{\phi,\centerdot}(x,y) f(y) y^{2\lambda} dy,\quad  x\in (0,\infty)\setminus {\rm supp } f,
\end{equation}
where the integral is understood in the $L^2((\frac{1}{N},N),\frac{dt}{t})$-Bochner sense.  Note that, given a smooth function $f$ with compact support in $(0,\infty)$ and $ x\in (0,\infty)\setminus {\rm supp } f$, the function
$$
F_x(y)=\mathfrak{G}^\lambda_{\phi,\centerdot}(x,y) f(y) y^{2\lambda},\quad y\in (0,\infty),
$$
is continuous from $(0,\infty)$ into $L^2((\frac{1}{N},N),\frac{dt}{t})$. Then, since $L^2((\frac{1}{N},N),\frac{dt}{t})$ is a separable space we obtain that $F_x$, $x\not\in {\rm \supp }f$, is a strongly measurable function. Furthermore,  according to (\ref{2.1}) the integral in (\ref{2.4}) is $L^2((\frac{1}{N},N),\frac{dt}{t})$-norm convergent provided that $x\notin\supp f$.

Let $f$ be a smooth function with compact support in $(0,\infty)$.  Then, $\mathcal{G}^\lambda_{\phi,N}(f)(x) \in L^2((\frac{1}{N},N),\frac{dt}{t})$, for every $x\in (0,\infty)\setminus \supp f$.

Consider $h \in L^2((\frac{1}{N},N),\frac{dt}{t})$. We have that
$$
\int_{1/N}^N h(t) \left[\mathcal{G}_{\phi,N}^\lambda(f)(x)\right](t)\frac{dt}{t} = \int_0^\infty\int_{1/N}^N h(t) \mathfrak{G}^\lambda_{\phi,t}(x,y) \frac{dt}{t}f(y) y^{2\lambda}dy,\quad x\in (0,\infty)\setminus\supp f.
$$

By using again (\ref{2.1}) we get
\begin{eqnarray*}
\int_0^\infty\int_{1/N}^N|h(t)||\mathfrak{G}^\lambda_{\phi,t}(x,y)|\frac{dt}{t}|f(y)|y^{2\lambda} dy && \\
&&\hspace{-5cm}\leq \|h\|_{L^2((\frac{1}{N},N),\frac{dt}{t})}\int_0^\infty \|\mathfrak{G}^\lambda_{\phi,\centerdot}(x,y)\|_{L^2((0,\infty),\frac{dt}{t})}|f(y)|y^{2\lambda}dy \\
&&\hspace{-5cm}\leq \|h\|_{L^2((\frac{1}{N},N),\frac{dt}{t})}\int_0^\infty \frac{|f(y)|y^{2\lambda}}{m_\lambda(B(x,|x-y|))} dy < \infty,\quad x \in (0,\infty) \setminus \supp f.
\end{eqnarray*}
It follows that
$$
\int_{1/N}^N h(t)[\mathcal{G}^\lambda_{\phi,N}(f)(x)](t)\frac{dt}{t} = \int_{1/N}^N h(t)\mathfrak{G}^\lambda_{\phi,t}(f)(x)\frac{dt}{t},\quad  x\in (0,\infty)\setminus\supp f.
$$
Hence, for every $x\in (0,\infty)\setminus \supp f$,
$$
[\mathcal{G}^\lambda_{\phi,N}(f)(x)](t) = \mathfrak{G}^\lambda_{\phi,t}(f)(x),\quad \mbox{a.e. }t\in \big(\frac{1}{N},N\big).
$$

We now observe that the function
$$
G(x,y,t)=\mathfrak{G}^\lambda_{\phi,t}(x,y) f(y) y^{2\lambda},\quad x,y,t\in (0,\infty),
$$
is continuous. Then, if $x_0\in (0,\infty )$, we have that
$$
\lim_{x\rightarrow x_0}\Big\| \mathfrak{G}^\lambda_{\phi,\centerdot}(f)(x)-\mathfrak{G}^\lambda_{\phi,\centerdot}(f)(x_0)\Big\|_{L^2((\frac{1}{N},N),\frac{dt}{t})}=0.
$$
So, as above we deduce that $\mathfrak{G}^\lambda_{\phi,\centerdot}(f): (0,\infty)\rightarrow L^2((\frac{1}{N},N),\frac{dt}{t})$ is a strongly measurable function.

By using Plancherel formula for Hankel transformation we deduce that
\begin{align*}
\left\|\mathfrak{G}^\lambda_{\phi,\centerdot}(f)\right\|^2_{L^2_{L^2((\frac{1}{N},N)\frac{dt}{t})}((0,\infty),m_\lambda)}&=\left\|\|\mathfrak{G}^\lambda_{\phi,\centerdot}(f)\|_{L^2((\frac{1}{N},N)\frac{dt}{t})}\right\|^2_{L^2((0,\infty),m_\lambda)}\\
&\leq  \int_0^\infty\int_0^\infty\left|\mathfrak{G}^\lambda_{\phi,t}(f)(x)\right|^2\frac{dt}{t}x^{2\lambda}dx\\
&= \int_0^\infty\int_0^\infty\left|(f\#_\lambda{\phi}_t)\right|^2 x^{2\lambda}dx\frac{dt}{t}\\
&= \int_0^\infty \int_0^\infty\left|h_\lambda(f)(x)\right|^2\left|h_\lambda(\phi_t)(x)\right|^2 x^{2\lambda}dx\frac{dt}{t}\\
&= \int_0^\infty \left|h_\lambda(f)(x)\right|^2\int_0^\infty \left|h_\lambda(\phi)(xt)\right|^2\frac{dt}{t}x^{2\lambda}dx\\
&= \int_0^\infty \left|h_\lambda(f)(x)\right|^2\int_0^\infty \left|h_\lambda(\phi)(u)\right|^2\frac{du}{u}x^{2\lambda}dx\\
&=\|f\|^2_{L^2((0,\infty),m_\lambda)}\int_0^\infty\left|h_\lambda(\phi)(u)\right|^2\frac{du}{u}.
\end{align*}
By property (c) we conclude that the operator
$$
\begin{array}{ccc}
\mathfrak{G}^\lambda_\phi:L^2((0,\infty),m_\lambda) & \longrightarrow & L^2_{L^2((\frac{1}{N},N),\frac{dt}{t})}((0,\infty),m_\lambda)\\
f& \longrightarrow & \mathfrak{G}^\lambda_{\phi}(f) 
\end{array}
$$
is bounded, where 
$$
\begin{array}{rccrccc}
\mathfrak{G}^\lambda_{\phi}(f):&(0,\infty) & \longrightarrow & L^2((\frac{1}{N},N),\frac{dt}{t}) &&&\\
&x   &  \longrightarrow &\mathfrak{G}^\lambda_{\phi}(f)(x):&(\frac{1}{N},N) &\longrightarrow &\mathbb R\\
&   &         & &t &\longrightarrow &\mathfrak{G}^\lambda_{\phi,t}(f)(x).
\end{array}
$$

By using vector valued Calder\'on-Zygmund theory (\cite{RRT}) we deduce that the operator $g^\lambda_{\phi,N}$ given by
$$
g^\lambda_{\phi,N}(f)(x)=\|f\#_\lambda \phi _{\centerdot}(x)\|_{L^2((\frac{1}{N},N),\frac{dt}{t})},\quad x\in (0,\infty ),
$$
can be extended from $L^2((0,\infty),m_\lambda) \cap L^p((0,\infty),m_\lambda)$ to $L^p((0,\infty),m_\lambda)$ as a bounded operator from $L^p((0,\infty),m_\lambda)$ to
\begin{enumerate}
\item $L^p((0,\infty),m_\lambda)$, for every $1<p<\infty$;
\item $L^{1,\infty}((0,\infty),m_\lambda)$, when $p=1$,
\end{enumerate}
with constants independent of $N$. This allows us to use monotone convergence theorem to get the same $L^p$-boundedness properties for $g_\phi^\lambda$. 
\end{proof}

We consider the operator
$$
\mathcal M^{\lambda,\#}_\phi(f)(x)= \sup_{\substack{Q\ni x\\ Q\,\text{interval}}} \esssup_{y_1,y_2 \in Q}\left|g^\lambda_\phi(f\chi_{(0,\infty)\setminus(3Q)})(y_1) - g^\lambda_\phi(f\chi_{(0,\infty)\setminus(3Q)})(y_2)\right|,\;x\in (0,\infty)
$$
\begin{prop}\label{propo 2.2}
Let $\lambda >0$. Suppose that $\phi$ is a differentiable function on $(0,\infty)$ such that $\int_0^\infty |u^{2\lambda +2}\phi '(u)|^2\frac{du}{u}<\infty$. Then, the operator $\mathcal M_\phi^{\lambda,\#}$ is bounded from $L^p((0,\infty),m_\lambda)$ into itself, for every $1<p<\infty$, and from $L^1((0,\infty),m_\lambda)$ into $L^{1,\infty}((0,\infty),m_\lambda)$.
\end{prop}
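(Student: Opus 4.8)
The plan is to reduce everything to the pointwise bound
$$
\mathcal M_\phi^{\lambda,\#}(f)(x)\le C\,\mathcal M_\lambda(f)(x),\qquad x\in(0,\infty),
$$
after which the conclusion is immediate from the $L^p$-boundedness ($1<p<\infty$) and the weak type $(1,1)$ of the Hardy--Littlewood maximal operator $\mathcal M_\lambda$ on the space of homogeneous type $((0,\infty),|\cdot|,m_\lambda)$, as already used in Section~\ref{S2.1}.

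To obtain this bound, fix an interval $Q\subset(0,\infty)$ with $x\in Q$, write $h=f\chi_{(0,\infty)\setminus(3Q)}$, and let $y_1,y_2\in Q$. Recalling $g_\phi^\lambda(h)(y)=\|h\#_\lambda\phi_\centerdot(y)\|_{L^2((0,\infty),\frac{dt}{t})}$ and that $h\#_\lambda\phi_t(y)=\int_{(0,\infty)\setminus(3Q)}\mathfrak{G}^\lambda_{\phi,t}(y,z)f(z)z^{2\lambda}dz$ with $\mathfrak{G}^\lambda_{\phi,t}(y,z)={}_\lambda\tau_y(\phi_t)(z)$, the (reverse) triangle inequality in $L^2((0,\infty),\frac{dt}{t})$ followed by Minkowski's integral inequality gives
$$
\bigl|g_\phi^\lambda(h)(y_1)-g_\phi^\lambda(h)(y_2)\bigr|\le\int_{(0,\infty)\setminus(3Q)}\bigl\|\mathfrak{G}^\lambda_{\phi,\centerdot}(y_1,z)-\mathfrak{G}^\lambda_{\phi,\centerdot}(y_2,z)\bigr\|_{L^2((0,\infty),\frac{dt}{t})}|f(z)|z^{2\lambda}dz.
$$
Next I would write $\mathfrak{G}^\lambda_{\phi,t}(y_1,z)-\mathfrak{G}^\lambda_{\phi,t}(y_2,z)=\int_{y_2}^{y_1}\partial_\xi\mathfrak{G}^\lambda_{\phi,t}(\xi,z)\,d\xi$ and apply Minkowski's inequality once more together with the kernel bound $(\ref{2.3})$ (equivalently $(\ref{2.2})$ by symmetry), whose proof used only the hypothesis $\int_0^\infty|u^{2\lambda+2}\phi'(u)|^2\frac{du}{u}<\infty$ assumed here. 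This yields
$$
\bigl\|\mathfrak{G}^\lambda_{\phi,\centerdot}(y_1,z)-\mathfrak{G}^\lambda_{\phi,\centerdot}(y_2,z)\bigr\|_{L^2((0,\infty),\frac{dt}{t})}\le C|y_1-y_2|\sup_{\xi\in Q}\frac{1}{|\xi-z|\,m_\lambda(B(\xi,|\xi-z|))},\qquad z\notin 3Q.
$$

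Now I would exploit the geometry. Let $x_Q$ be the center of $Q$ and $\ell(Q)$ its length. For $\xi\in Q$ and $z\notin 3Q$ one has $|\xi-z|\sim|x_Q-z|$ and, by the doubling property of $m_\lambda$, $m_\lambda(B(\xi,|\xi-z|))\sim m_\lambda(B(x_Q,|x_Q-z|))$; also $|y_1-y_2|\le\ell(Q)$. Splitting the integral over the dyadic annuli $\{z:2^k\ell(Q)\le|z-x_Q|<2^{k+1}\ell(Q)\}$, $k\ge0$, and using doubling and $x\in Q$ to compare averages over these annuli with $\mathcal M_\lambda(f)(x)$, the quantity
$$
|y_1-y_2|\int_{(0,\infty)\setminus(3Q)}\frac{|f(z)|z^{2\lambda}}{|x_Q-z|\,m_\lambda(B(x_Q,|x_Q-z|))}dz
$$
is controlled by $C\sum_{k\ge0}2^{-k}\mathcal M_\lambda(f)(x)\le C\mathcal M_\lambda(f)(x)$. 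Taking the essential supremum over $y_1,y_2\in Q$ and then the supremum over intervals $Q\ni x$ gives $\mathcal M_\phi^{\lambda,\#}(f)(x)\le C\mathcal M_\lambda(f)(x)$, completing the proof; note that the chain of estimates also shows all the integrals converge for $z$ away from $3Q$, and the bound is trivial at points where $\mathcal M_\lambda(f)=\infty$.

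The routine ingredients are the two Minkowski inequalities, the dyadic-annulus summation, and the doubling comparisons. The step requiring the most care is the differentiation in the vector-valued setting: one must verify that $\xi\mapsto\mathfrak{G}^\lambda_{\phi,\centerdot}(\xi,z)$ is of class $C^1$ as a map into $L^2((0,\infty),\frac{dt}{t})$ with derivative controlled by $(\ref{2.2})$, so that the fundamental theorem of calculus combined with Minkowski's inequality is legitimate; this rests on the explicit formula for $\partial_x\mathfrak{G}^\lambda_{\phi,t}(x,y)$ and the pointwise bound $\left|\partial_x\mathfrak{G}^\lambda_{\phi,t}(x,y)\right|\le\frac{C}{t^{2\lambda+2}}\int_0^\pi\bigl|\phi'\bigl(\tfrac{\sqrt{x^2+y^2-2xy\cos\theta}}{t}\bigr)\bigr|\sin^{2\lambda-1}\theta\,d\theta$ derived in the proof of Proposition~\ref{propo 2.1}.
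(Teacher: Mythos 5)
Your proposal is correct and follows essentially the same route as the paper: Minkowski's inequality to pass the $L^2(\frac{dt}{t})$ norm inside, the fundamental theorem of calculus in the spatial variable together with the gradient bound \eqref{2.2} (which indeed uses only the hypothesis on $\phi'$), and then a dyadic-annulus/doubling argument to dominate $\mathcal M_\phi^{\lambda,\#}(f)$ pointwise by $C\mathcal M_\lambda(f)$, whose $L^p$ and weak $(1,1)$ bounds finish the proof. The only cosmetic differences are that the paper centers the annuli at $y_1$ (via $\widetilde Q=B(y_1,r_Q)$) rather than at $x_Q$, and it applies the fundamental theorem of calculus pointwise in $t$ before taking the $L^2(\frac{dt}{t})$ norm, which sidesteps the vector-valued $C^1$ verification you flag.
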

\begin{proof} Let $x\in (0,\infty)$. We choose an interval $Q=B(x_Q,r_Q) \subset (0,\infty)$, with $x_Q,r_Q\in (0,\infty)$, and  such that $x\in Q$. By using Minkowski inequality and (\ref{2.2}) we obtain
\begin{eqnarray*}
\left|g^\lambda_\phi(f\chi_{(0,\infty)\setminus(3Q)})(y_1) - g^\lambda_\phi(f\chi_{(0,\infty)\setminus(3Q)})(y_2)\right| & &\\
& &\hspace{-5cm} \leq \left( \textcolor{black}{\int_0^\infty \left|\mathfrak G^\lambda_{\phi,t}\left( f\chi_{(0,\infty)\setminus(3Q)}\right)(y_1) - \mathfrak G^\lambda_{\phi,t}\left( f\chi_{(0,\infty)\setminus(3Q)}\right)(y_2)\right|^2\frac{dt}{t}}\right)^{1/2}\\
& &\hspace{-5cm} \leq \int_{(0,\infty)\setminus(3Q)} |f(y)|y^{2\lambda}\left(\int_0^\infty \left|\,_\lambda \tau_{y_1}(\phi_t)(y)-\,_\lambda\tau_{y_2}(\phi_t)(y)\right|^2\frac{dt}{t}\right)^{\frac{1}{2}}dy\\
& &\hspace{-5cm} = \int_{(0,\infty)\setminus(3Q)} |f(y)|y^{2\lambda}\left(\int_0^\infty\left|\int_{y_1}^{y_2} \partial_z\left(\,_\lambda \tau_z(\phi_t)(y)\right)dz\right|^2\frac{dt}{t}\right)^{\frac{1}{2}}dy\\
& &\hspace{-5cm} \leq \int_{(0,\infty)\setminus(3Q)} |f(y)|y^{2\lambda}\left|\int_{y_1}^{y_2}\left(\int_0^\infty\left| \partial_z\left(\,_\lambda\tau_z(\phi_t)(y)\right)\right|^2\frac{dt}{t}\right)^{\frac{1}{2}}dz\right|dy\\
& &\hspace{-5cm} \leq C\int_{(0,\infty)\setminus(3Q)} |f(y)|y^{2\lambda}\left|\int_{y_1}^{y_2}\frac{dz}{|z-y|m_\lambda(B(z,|z-y|))}\right|dy\\
& &\hspace{-5cm} \leq C|y_1-y_2|\int_{(0,\infty)\setminus(3Q)} \frac{|f(y)|y^{2\lambda}}{|y_1-y|m_\lambda(B(y_1,|y-y_1|))}dy,\;\; y_1,y_2\in Q.
\end{eqnarray*}

Let $y_1\in Q$. We define $\widetilde Q = B(y_1,r_Q)$. Then, by taking into account that $m_\lambda$ is doubling, we obtain
\begin{eqnarray*}
\left|g_\phi^\lambda\left(f\chi_{(0,\infty)\setminus(3Q)}\right)(y_1) - g_\phi^\lambda\left(f\chi_{(0,\infty)\setminus(3Q)}\right)(y_2)\right| &&\\
& &\hspace{-5cm}\leq Cr_Q\sum^\infty_{k=1}\int_{2^{k+1}\widetilde Q \setminus 2^k\widetilde Q}\frac{|f(y)|}{|y_1-y|m_\lambda(B(y_1,|y-y_1|))}y^{2\lambda}dy\\
& &\hspace{-5cm}\leq Cr_Q\sum^\infty_{k=1}\frac{1}{2^kr_Q} \frac{1}{m_\lambda(B(y_1,2^{k+1}r_Q))}\int_{2^{k+1}\widetilde Q }|f(y)|y^{2\lambda}dy \\
& &\hspace{-5cm}\leq C\mathcal{M}_\lambda(f)(x),\quad  y_2 \in Q.
\end{eqnarray*}
We get
$$
\mathcal M_\phi^{\lambda,\#}(f)(x) \leq C\mathcal M_\lambda (f)(x).
$$
The $L^p$-boundedness properties of $\mathcal M^{\lambda,\#}_\phi$ can be deduced now from the corresponding ones of $\mathcal M_\lambda$.
\end{proof}

By using \cite[Theorem 1.1]{Lori}, the results in Proposition \ref{propo 2.1} and \ref{propo 2.2} lead to that, for a certain $\eta \in (0,1)$ the following property holds: for every $f \in L^1((0,\infty), m_\lambda)$ with compact support there exists an $\eta$-sparse collection of cubes $S$ in $(0,\infty)$ such that $c=1$ in (\ref{1.1}) and
$$
g_\phi^\lambda(f)(x) \leq CC_{\phi,\lambda} \sum_{Q \in S}|f|_Q\chi_Q (x),\;\;x\in (0,\infty),
$$
where
$$
C_{\phi,\lambda}=\|g_\phi^\lambda\|_{L^1((0,\infty),m_\lambda) \rightarrow L^{1,\infty}((0,\infty),m_\lambda)}+\left\|\mathcal M_\phi^{\lambda,\#}\right\|_{L^1((0,\infty),m_\lambda) \rightarrow L^{1,\infty}((0,\infty),m_\lambda)},
$$
and $C$ depends on $S$. Here $|f|_Q =\frac{1}{m_\lambda(Q)}\int_Q |f(y)|y^{2\lambda}dy$, $Q \in S$.
The proof can be finished by using \cite[Proposition 4.1]{Lori}.

\subsection{Proof of Theorem \ref{Th1.1} for $V_\rho ^\lambda (\{\phi _t\}_{t>0})$}\label{S2.3}

In the next proposition we establish the $L^p$-boundedness properties of the $\rho$-variation operator $V_\rho^\lambda(\{\phi_t\}_{t>0})$. In order to prove this proposition we can not apply the vector-valued Calder\'on-Zygmund theory because we do not now if $V_\rho^\lambda (\{\phi_t\}_{t>0})$ is bounded from $L^q((0,\infty),m_\lambda)$ into itself for some $1<q<\infty$.
\begin{prop}\label{propo 2.3}
Let $\lambda >0$ and $\rho >2$. Assume that $\phi \in C^2(0,\infty)$ and let 
$$
\Phi(z)=\int_0^\infty u^{\lambda -1}\phi (\sqrt{z^2+u})du,\quad z\in \mathbb{R}.
$$
Suppose that the following properties hold:
\begin{enumerate}
\item [(a)] $\displaystyle\int_0^\infty |\phi^{(k)}(x)| x^{2\lambda +k}dx < \infty$, $k=0,1,2$;
\item [(b)] $\displaystyle\int_{\frac{1}{2}}^2 \frac{v^\lambda}{|1-v|}\int_0^\infty s^{k-1/2}\int_{\frac{\pi^2vs}{4(1-v)^2}} u^{\lambda-1}\frac{|\phi^{(k)}(\sqrt{s+u})|}{(s+u)^{k/2}}dudsdv < \infty$, $k=0,1$;
\item [(c)] $\displaystyle\int_0^\infty z^2\int_0^\infty u^{\lambda-1}|\phi'(\sqrt{z^2+u})|\frac{dudz}{\sqrt{z^2+u}} < \infty$;
\item [(d)] $\displaystyle \lim_{|z|\rightarrow \infty}\Phi(z)=0$;
\item[(e)] $\displaystyle \int_0^\infty (|\Phi(u)|+u|\Phi'(u)|)du<\infty$.
\end{enumerate}
Then, the operator $V_\rho^\lambda(\{\phi_t\}_{t>0})$ is bounded from $L^p((0,\infty),m_\lambda)$ into itself, for every $1<p<\infty$, and from
$L^1((0,\infty),m_\lambda)$ into $L^{1,\infty}((0,\infty),m_\lambda)$
\end{prop}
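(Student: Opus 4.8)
The plan is to transfer the problem to a classical $\rho$‑variation operator on the Euclidean line, plus a remainder that is absorbed by the Bessel Hardy--Littlewood maximal operator $\mathcal M_\lambda$. The starting point is a change of variables in the Hankel translation: in the integral defining ${}_\lambda\tau_x(\phi_t)(y)$ one puts $u=2xy(1-\cos\theta)\in(0,4xy)$, for which $x^2+y^2-2xy\cos\theta=(x-y)^2+u$ and $\sin^{2\lambda-1}\theta\,d\theta=2^{1-2\lambda}(xy)^{1-2\lambda}u^{\lambda-1}(4xy-u)^{\lambda-1}\,du$, so that
$$
{}_\lambda\tau_x(\phi_t)(y)\,y^{2\lambda}=\frac{c_\lambda\,y}{x^{2\lambda-1}\,t^{2\lambda+1}}\int_0^{4xy}\phi\bigl(\sqrt{(x-y)^2+u}/t\bigr)u^{\lambda-1}(4xy-u)^{\lambda-1}\,du .
$$
Writing $(4xy-u)^{\lambda-1}=(4xy)^{\lambda-1}+[(4xy-u)^{\lambda-1}-(4xy)^{\lambda-1}]$, splitting $\int_0^{4xy}=\int_0^\infty-\int_{4xy}^\infty$, and using the identity $\int_0^\infty\phi(\sqrt{z^2+u}/t)u^{\lambda-1}\,du=t^{2\lambda}\Phi(z/t)$ (which is exactly where the function $\Phi$ of the statement enters), the kernel decomposes as
$$
{}_\lambda\tau_x(\phi_t)(y)\,y^{2\lambda}=c_\lambda'\,(y/x)^{\lambda}\,\Phi_t(x-y)+E_t^{(1)}(x,y)+E_t^{(2)}(x,y),
$$
where $\Phi_t(z)=t^{-1}\Phi(z/t)$ is the ordinary $t$‑dilation of the even extension of $\Phi$ to $\mathbb R$, $E^{(1)}$ carries the factor $(4xy-u)^{\lambda-1}-(4xy)^{\lambda-1}$ over $(0,4xy)$, and $E^{(2)}$ is the tail over $u>4xy$ (times $(4xy)^{\lambda-1}$). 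Consequently, with $g(y)=y^{\lambda}f(y)$,
$$
V_\rho^\lambda(\{\phi_t\}_{t>0})(f)(x)\le c_\lambda'\,x^{-\lambda}\,\mathcal V(g)(x)+\mathcal R^{(1)}(f)(x)+\mathcal R^{(2)}(f)(x),\qquad x\in(0,\infty),
$$
where $\mathcal V(g)(x)=\bigl\|\{(g*\Phi_t)(x)\}_{t>0}\bigr\|_{v_\rho}$ is the Euclidean $\rho$‑variation operator for the approximate identity $\{\Phi_t\}_{t>0}$ and $\mathcal R^{(i)}(f)(x)=\bigl\|\bigl\{\int_0^\infty E_t^{(i)}(x,y)f(y)\,dy\bigr\}_{t>0}\bigr\|_{v_\rho}$.

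For the main term, conditions (a), (c), (d) and (e) are precisely what guarantees that $\Phi$ is a finite function of class $C^1$ (in fact $C^2$, using the $k=2$ part of (a)), with $\Phi(z)\to0$ as $|z|\to\infty$ and $\int_0^\infty(|\Phi(u)|+u|\Phi'(u)|)\,du<\infty$; that is, the even extension of $\Phi$ is an integrable kernel of bounded variation. Since $\rho>2$, the operator $\mathcal V$ is then bounded on $L^p(\mathbb R)$ for every $1<p<\infty$ and from $L^1(\mathbb R)$ into $L^{1,\infty}(\mathbb R)$ by the known variational estimates for approximate identities (see, e.g., \cite{CJRW1}, \cite{CJRW2}), and, being dominated up to the Euclidean maximal operator by an operator of Calder\'on--Zygmund type, it is also bounded on $L^p(\mathbb R,w)$ for every $w\in A_p(\mathbb R)$. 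Pushing these bounds through the change of density $f\leftrightarrow g=y^{\lambda}f$, which turns $x^{2\lambda}dx$ into a power weight on $\mathbb R$, yields at once the case $p=2$ of the proposition (the associated power weight being then trivial), and more generally the $L^p(m_\lambda)$‑estimate whenever the power weight $x^{\lambda(2-p)}$ is of class $A_p(\mathbb R)$. The remaining values of $p$ and the weak‑type $(1,1)$ endpoint are obtained from these estimates together with the error bounds below, via a Calder\'on--Zygmund argument in the space of homogeneous type $((0,\infty),|\cdot|,m_\lambda)$ exploiting the $x$‑regularity of the kernels $\Phi_t(x-y)$ and $E_t^{(i)}(x,y)$ established along the way.

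For the error terms the basic tool is the elementary inequality $\|F\|_{v_\rho}\le\int_0^\infty|t\partial_tF_t|\,\frac{dt}{t}$, valid whenever $F_t$ has a limit as $t\to0^+$ and as $t\to\infty$ (here $E_t^{(i)}\to0$ at both ends). It reduces $\mathcal R^{(i)}(f)(x)$ to the $L^1(\tfrac{dt}{t})$‑norm in $t$ of $\int_0^\infty t\partial_tE_t^{(i)}(x,y)f(y)\,dy$; computing $t\partial_tE_t^{(i)}$ produces integrands involving $\phi$ and $\phi'$, which is the source of the cases $k=0,1$ in (b) and of the occurrence of $\phi'$ in (c), while the additional derivative needed for the companion regularity estimate accounts for the $k=2$ term in (a). Interchanging the order of integration and rescaling, the contribution of the singular factor $(4xy-u)^{\lambda-1}-(4xy)^{\lambda-1}$ near $u\sim 4xy$ (i.e.\ $\theta\sim\pi$, where the argument of $\phi$ is comparable to $(x+y)/t$) and the tail contribution are, by the finiteness of the integrals in (b) and (c) together with (a), majorized by a series of the form $\sum_{k\ge0}2^{-k}\,m_\lambda(B(x,2^k|x-y|))^{-1}\int_{B(x,2^k|x-y|)}|f(y)|y^{2\lambda}\,dy$, hence by $C\,\mathcal M_\lambda(f)(x)$. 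Since $\mathcal M_\lambda$ is bounded on $L^p(m_\lambda)$, $1<p<\infty$, and from $L^1(m_\lambda)$ into $L^{1,\infty}(m_\lambda)$, this disposes of $\mathcal R^{(1)}$ and $\mathcal R^{(2)}$ and completes the proof.

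The step I expect to be the main obstacle is the control of the remainder kernels: verifying that hypotheses (b) and (c) are exactly strong enough to absorb $E^{(1)}$ and $E^{(2)}$ into $\mathcal M_\lambda$. The difficulty concentrates near $\theta=\pi$, where for $0<\lambda<1$ the Jacobian factor $(4xy-u)^{\lambda-1}$ fails to be integrable up to its endpoint and must be compensated by the decay of $\phi$ at infinity — quantified precisely by the weight $\tfrac{v^\lambda}{|1-v|}$ together with the lower limit $\tfrac{\pi^2vs}{4(1-v)^2}$ in (b) — and in keeping the bounds uniform after the extra $t$‑derivative incurred in passing to the $v_\rho$‑norm. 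A secondary, but still delicate, point is that the operator is not an honest Calder\'on--Zygmund operator on $((0,\infty),|\cdot|,m_\lambda)$ (its kernel satisfies only integral, not pointwise, size bounds because the hypotheses on $\phi$ are integral), so the full range of $p$ and the $(1,1)$ endpoint must be squeezed out of the $p=2$ estimate and the kernel regularity by a Calder\'on--Zygmund decomposition argument rather than a black‑box theorem. The reduction to the Euclidean operator and its transference back, once the kernel decomposition above is in place, should be routine.
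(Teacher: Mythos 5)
Your kernel algebra is sound and, in fact, your main term $(y/x)^{\lambda}\Phi_t(x-y)$ is exactly the paper's kernel $K_{t,1}^4(x,y)$ (tested against $y^{2\lambda}dy$), and conditions (c)--(e) are used for the Euclidean variation operator of $\{\Phi_{(t)}\}_{t>0}$ just as you intend. The genuine gap is that you extract this main term \emph{globally} in $y$ (and over the whole range $\theta\in(0,\pi)$), whereas the paper first disposes of $\theta\in(\pi/2,\pi)$ and of $y\notin(x/2,2x)$ by direct kernel bounds (Hardy-type operators $H_0^\lambda$, $H_\infty$) and only then performs the $\Phi$-extraction on the local piece. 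This localization is not cosmetic. Under (d) and (e) one has $\|\{\Phi_t(z)\}_{t>0}\|_{v_\rho}\geq\sup_{t>0}|\Phi_t(z)|=|z|^{-1}\sup_{u>0}u|\Phi(u)|$, so in the far field your main term contributes at least $c\,x^{-\lambda-1}\int_0^{x/2}y^{\lambda}|f(y)|\,dy$; testing with $f=\mathcal X_{(0,1)}$ shows this operator is unbounded on $L^p((0,\infty),m_\lambda)$ for $1\leq p\leq(2\lambda+1)/(\lambda+1)$, because of the extra factor $(x/y)^{\lambda}$ relative to $H_0^\lambda$. Since the full kernel \emph{is} controlled there (the paper's $\mathbb K_1\leq C|x-y|^{-2\lambda-1}$ gives $H_0^\lambda$), your error kernels $E^{(1)},E^{(2)}$ must carry a compensating contribution of the same size, so the claim $\mathcal R^{(i)}(f)\leq C\mathcal M_\lambda(f)$ cannot hold: after the triangle inequality in $v_\rho$ the cancellation between main and error terms is lost. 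Note also that hypothesis (b) only integrates $v=y/x$ over $(1/2,2)$, i.e.\ it is calibrated for the local regime and does not reach your global $E^{(2)}$.

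The transference step has the same defect. Applying the Euclidean bounds to $g=y^{\lambda}f$ globally turns $L^p(m_\lambda)$ into $L^p(x^{\lambda(2-p)}dx)$, and $x^{\lambda(2-p)}\in A_p(\mathbb R)$ only for $(2\lambda+1)/(\lambda+1)<p<2+1/\lambda$ — never $p=1$, and excluding large $p$. Your fallback (a Calder\'on--Zygmund argument from the $p=2$ case) would require $v_\rho$-valued kernel size and regularity of the form $C/m_\lambda(B(x,|x-y|))$; under the purely integral hypotheses (a)--(e) one only gets $\int_0^\infty|\partial_tK_t(x,y)|\,dt\leq C|x-y|^{-2\lambda-1}$, which is strictly weaker than $1/m_\lambda(B(x,|x-y|))\sim|x-y|^{-1}(x+|x-y|)^{-2\lambda}$ precisely in the local regime $y\sim x$ (the pointwise decay assumptions that would repair this appear only later, in Proposition 2.4 and Section 3.3, and are not among (a)--(e)). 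The paper sidesteps both problems at once: on the local piece it covers $(0,\infty)$ by $x\in[2^j,2^{j+1})$, $y\in[2^{j-1},2^{j+2})$, where $x^{-\lambda}$, $y^{\lambda}$ and $y^{2\lambda}dy$ versus $dy$ are all comparable to fixed powers of $2^j$ that cancel exactly, and the bounded overlap of these annuli transfers both the strong $L^p$ and the weak $(1,1)$ Euclidean variational estimates with no weight condition. You should adopt this order of operations: localize first, extract $\Phi$ second.
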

\begin{proof}
We define $K_t(x,y) =\,_\lambda \tau_x(\phi_t)(y)$, $t,x,y\in(0,\infty)$. We decompose $K_t(x,y)$ as follows
$$
 K_t(x,y)=K_{t,1}(x,y)+K_{t,2}(x,y),\quad t,x,y \in (0,\infty),
$$
being
$$
K_{t,1}(x,y)=\frac{\Gamma(\lambda + \frac{1}{2})}{\sqrt \pi \Gamma(\lambda)t^{2\lambda+1}}\int _0^{\frac{\pi}{2}} \phi\left(\frac{\sqrt{x^2+y^2-2xy\cos\theta}}{t}\right) \sin^{2\lambda -1}\theta d\theta,\quad t,x,y \in (0,\infty).
$$
We consider, for $j=1,2$, 
$$
T_{t,j}(f)(x) =\int_0^\infty K_{t,j}(x,y) f(y)y^{2\lambda}dy,\;\; t,x \in (0,\infty).
$$
It follows that
$$
V^\lambda_\rho(\{\phi_t\}_{t>0})(f)\leq V_{\rho}(\{T_{t,1}\}_{t>0})(f) + V_\rho(\{T_{t,2}\}_{t>0})(f),
$$
where $V_\rho(\{T_{t,j}\}_{t>0})$, $j=1,2$, are defined in the obvious way. We are going to study $V_\rho(\{T_{t,j}\}_{t>0})$, $j=1,2$.

Suppose that $t_1>t_2> \ldots > t_k>0$, with $k  \in  \mathbb N$. We have that

\begin{align*}
\left( \sum_{j=1}^{k-1}\left|T_{t_j,2}(f)(x)- |T_{t_{j+1,2}}(f)(x)\right|^\rho\right)^{\frac{1}{\rho}} &= \left(\sum_{j=1}^{k-1}\left|\int_{t_{j+1}}^{t_j} \partial_tT_{t,2}(f)(x)dt\right|^{\rho}\right)^{\frac{1}{\rho}} \\
&\hspace{-2cm}\leq  C\sum_{j=1}^{k-1}\int_{t_{j+1}}^{{t_j}}\left|\partial_tT_{t,2}(f)(x)\right|dt\leq C\int_0^\infty\left|\partial_tT_{t,2}(f)(x)\right|dt \\
&\hspace{-2cm}\leq C\int_0^\infty |f(y)|y^{2\lambda}\int_0^\infty \left|\partial_tK_{t,2}(x,y)\right|dtdy, \quad x\in(0,\infty).
\end{align*}
We get
$$
V_\rho(\{T_{t,2}\}_{t>0})(f)(x) \leq C\int_0^\infty  \mathbb K_2(x,y) |f(y)|y^{2\lambda}dy,\quad x\in (0,\infty)
$$
where
$$
\mathbb K_2(x,y)= \int_0^\infty \left|\partial_tK_{t,2}(x,y)\right|dt,\quad x,y  \in (0,\infty).
$$
Since (a) for $k=0,1$ hold we obtain
\begin{eqnarray*}
|\mathbb K_2(x,y)| &\leq& C\int_0^\infty \frac{1}{t^{2\lambda +2}} \int_{\frac{\pi}{2}}^\pi(|\phi(u)|+|\phi'(u)|u)_{|{u=\frac{\sqrt{x^2+y^2-2xy\cos\theta}}{t}}}\sin^{2\lambda-1}\theta d\theta dt\\
&\leq& C\int_{\frac{\pi}{2}}^\pi \frac{\sin^{2\lambda -1}\theta}{(\sqrt{x^2+y^2-2xy\cos\theta})^{2\lambda+1}}\int_0^\infty(|\phi(u)|+|\phi'(u)|u)u^{2\lambda}du d\theta\\
&\leq & \frac{C}{(x^2+y^2)^{\lambda+\frac{1}{2}}} \leq C\left\{\begin{array}{ll}
                                                               y^{-2\lambda-1}, & 0<x\leq y < \infty,\\[0,2cm]
                                                               x^{-2\lambda-1}, & 0<y\leq x <\infty.
                                                               \end{array}\right.
\end{eqnarray*}
We consider the Hardy type operators defined by
$$
H_0^\lambda(g)(x)= \frac{1}{x^{2\lambda +1}}\int_0^x g(y) y^{2\lambda}dy,\quad x \in (0,\infty),
$$
and
$$
H_\infty(g)(x)=\int_x^\infty g(y)\frac{dy}{y}, \quad x \in (0,\infty).
$$
According to \cite[Lemmas 2.6 and 2.7]{BHNV}, $H_0^\lambda$ and $H_\infty$ are bounded from $L^p((0,\infty),m_\lambda)$ into itself for every $1<p<\infty$, and from $L^1((0,\infty),m_\lambda)$ into $L^{1,\infty}((0,\infty),m_\lambda)$.

We deduce that
$$
V_\rho(\{T_{t,2}\}_{t>0})(f)(x) \leq C(H_0^\lambda (|f|)(x)+H_\infty (|f|)(x)),\quad x\in(0,\infty ).
$$

We now study $V_\rho(\{T_{t,1}\}_{t>0})$. We write
\begin{eqnarray*}
T_{t,1}(f)(x) &=& \left(\int_0^{\frac{x}{2}} + \int_{\frac{x}{2}}^{2x}+\int_{2x}^\infty\right)K_{t,1}(x,y)f(y)y^{2\lambda}dy\\
&=& \sum_{j=1}^3 T_{t,1,j}(f)(x),\;\; t,x \in (0,\infty).
\end{eqnarray*}
By proceeding as above we get
$$
V_\rho(\{T_{t,1,1}\}_{t>0})(f)(x)\leq C\int_0^{\frac{x}{2}}\mathbb K_1(x,y) |f(y)|y^{2\lambda}dy,\quad x\in(0,\infty),
$$
where
$$
\mathbb K_1(x,y)=\int_0^\infty |\partial_tK_{t,1}(x,y)|dt,\quad x,y \in (0,\infty).
$$
By using again property (a) for $k=0,1$ we have that
\begin{eqnarray*}
\mathbb K_1(x,y) &\leq& C\int_0^\infty \frac{1}{t^{2\lambda +2}}\int_0^{\frac{\pi}{2}}(|\phi(u)|+|\phi'(u)|u)_{|u=\frac{\sqrt{x^2+y^2-2xy\cos \theta}}{t}}\sin ^{2\lambda -1}\theta d\theta dt\\
&\leq & C \int^{\frac{\pi}{2}}_0 \frac{\sin^{2\lambda-1}\theta}{((x-y)^2+2xy(1-\cos \theta))^{\lambda +\frac{1}{2}}}d\theta \\
&\leq& C\frac{1}{|x-y|^{2\lambda+1}},\quad x,y \in (0,\infty),\;x \not=y.
\end{eqnarray*}
Then,
\begin{equation}\label{(2.5)}
V_\rho(\{T_{t,1,1}\}_{t>0})(f)(x) \leq \frac{C}{x^{2\lambda+1}}\int_0^x|f(y)|y^{2\lambda}dy=CH_0^\lambda (|f|)(x),\quad x\in (0,\infty).
\end{equation}
In a similar way we can see that
\begin{equation}\label{(2.6)}
V_\rho(\{T_{t,1,3}\}_{t>0})(f)(x) \leq C\int_x^\infty |f(y)| \frac{dy}{y}=CH_\infty (|f|)(x),\quad x \in (0,\infty).
\end{equation}
On the other hand, we write
\begin{eqnarray*}
K_{t,1}(x,y) &=& \frac{\Gamma(\lambda+\frac{1}{2})}{\sqrt\pi\Gamma(\lambda)}\frac{1}{t^{2\lambda+1}}\int_0^{\frac{\pi}{2}}\phi\left(\frac{\sqrt{x^2+y^2-2xy\cos\theta}}{t}\right)\sin^{2\lambda -1}\theta d\theta \\
&=& \frac{\Gamma(\lambda+\frac{1}{2})}{\sqrt\pi\Gamma(\lambda)}\frac{1}{t^{2\lambda+1}}\left[\int_0^{\frac{\pi}{2}}\phi\left(\frac{\sqrt{x^2+y^2-2xy\cos\theta}}{t}\right)(\sin^{2\lambda -1}\theta -\theta^{2\lambda -1})d\theta \right.\\
&&+ \int_0^{\frac{\pi}{2}}\theta^{2\lambda-1}\left( \phi\left(\frac{\sqrt{x^2+y^2-2xy\cos\theta}}{t}\right) - \phi \left(\frac{\sqrt{(x-y)^2+xy\theta^2}}{t}\right)\right)d\theta \\
&&-\frac{1}{2}\left(\frac{t}{\sqrt{xy}}\right)^{2\lambda} \int_{\frac{\pi^2xy}{4t^2}}^\infty u^{\lambda -1}\phi \left(\sqrt{\frac{(x-y)^2}{t^2}+u}\right)du\\
&&+\left.\frac{1}{2}\left(\frac{t}{\sqrt{xy}}\right)^{2\lambda}\int_0^\infty u^{\lambda-1}\phi\left(\sqrt{\frac{(x-y)^2}{t^2}+u}\right)du\right] \\
&=&\sum_{j=1}^4K_{t,1}^j(x,y),\quad t,x,y \in (0,\infty).
\end{eqnarray*}
We define, for every $j=1,2,3,4$,
$$
\mathfrak{K}_{t,j}(f)(x) = \int_{\frac{x}{2}}^{2x} K_{t,1}^j(x,y)f(y) y^{2\lambda} dy,\quad t,x\in(0,\infty).
$$
We get
$$
V_\rho(\{T_{t,1,2}\}_{t>0})(f)  \leq \sum_{j=1}^4 V_\rho(\{\mathfrak K_{t,j}\}_{t>0})(f).
$$
As above we can obtain that, for $j=1,2,3$,
$$
V_\rho(\{\mathfrak K_{t,j}\}_{t>0})(f)(x)  \leq C\int_{\frac{x}{2}}^{2x}|f(y)|y^{2\lambda}\int_0^\infty |\partial_tK^j_{t,1}(x,y)|dtdy ,\quad x\in(0,\infty).
$$
Since $|\sin^{2\lambda-1}\theta - \theta^{2\lambda-1}| \leq C\theta^{2\lambda +1}$, $\theta \in (0,\frac{\pi}{2})$, we get
\begin{eqnarray*}
\int_0^\infty |\partial_tK^1_{t,1}(x,y)|dt &\leq & C\int_0^\infty \frac{1}{t^{2\lambda+2}}\int_0^{\frac{\pi}{2}}\theta^{2\lambda +1}
(|\phi(u)|+|\phi'(u|)u)_{|u=\frac{\sqrt{x^2+y^2-2xy\cos\theta}}{t}}d\theta dt\\
&\leq & C\int_0^{\frac{\pi}{2}}\frac{\theta^{2\lambda+1}}{((x-y)^2+2xy(1-\cos \theta))^{\lambda+\frac{1}{2}}}d\theta \leq C\int_0^{\frac{\pi}{2}} \frac{\theta^{2\lambda+1}}{(xy(1-\cos \theta))^{\lambda+\frac{1}{2}}} d\theta \\
&\leq &  \frac{C}{(xy)^{\lambda+\frac{1}{2}}} \leq \frac{C}{x^{2\lambda+1}},\quad 0<\frac{x}{2}<y<2x.
\end{eqnarray*}
We have used that $\phi$ satisfies (a) for $k=0,1$.
Then
\begin{equation}\label{(2.7)}
V_\rho(\{\mathfrak K_{t,1}\}_{t>0})(f)(x) \leq \frac{C}{x^{2\lambda+1}}\int_{\frac{x}{2}}^{2x}|f(y)|y^{2\lambda}dy\,\quad x \in (0,\infty).
\end{equation}
We have that
\begin{align*}
\partial_tK^2_{t,1}(x,y) &= -\frac{\Gamma(\lambda +\frac{1}{2})}{\sqrt\pi \Gamma (\lambda)} \frac{1}{t^{2\lambda +1}}\\
&\hspace{-1cm}\times \left[\frac{2\lambda +1}{t} \int_0^{\frac{\pi}{2}}\theta^{2\lambda -1}\left(\phi\left(\frac{\sqrt{(x-y)^2+2xy(1-\cos\theta)}}{t}\right)\right. -\phi\left(\frac{\sqrt{(x-y)^2+xy\theta^2}}{t}\right)\right)d\theta\\
&\hspace{-1cm}+\int_0^{\frac{\pi}{2}}\theta^{2\lambda -1}\left(\phi'\left(\frac{\sqrt{(x-y)^2+2xy(1-\cos\theta)}}{t}\right)\frac{\sqrt{(x-y)^2+2xy(1-\cos\theta)}}{t^2}\right.\\
&\hspace{-1cm} \left.\left.-\phi'\left(\frac{\sqrt{(x-y)^2+xy\theta^2}}{t}\right)\frac{\sqrt{(x-y)^2+xy\theta^2}}{t^2}\right)d\theta\right]\\
&\hspace{-1cm}=K_{t,1}^{2,1}(x,y) + K_{t,1}^{2,2}(x,y),\quad t,x,y \in (0,\infty).
\end{align*}
We can write
$$
K^{2,1}_{t,1} (x,y)= -\frac{\Gamma(\lambda + \frac{3}{2})}{\sqrt\pi\Gamma(\lambda)t^{2\lambda+2}}\int_0^{\frac{\pi}{2}}\theta^{2\lambda-1}
\int_{\theta^2}^{2(1-\cos\theta)}\phi'\left(\frac{\sqrt{(x-y)^2+xyz}}{t}\right)\frac{xy}{t\sqrt{(x-y)^2+xyz}}dz d\theta.
$$
Then, since $\phi$ satisfies (a) for $k=1$, we get
\begin{align*}
\int_0^\infty \left|K^{2,1}_{t,1}(x,y)\right|dt \\
&\hspace{-2cm}\leq C \int_0^\infty\frac{1}{t^{2\lambda+3}}\int_0^{\frac{\pi}{2}}\theta^{2\lambda-1}\int_{2(1-\cos\theta)}^{\theta^2}
\left|\phi'\left(\frac{\sqrt{(x-y)^2+xyz}}{t}\right)\right|\frac{xy}{\sqrt{(x-y)^2+xyz}}dz d\theta dt\\
&\hspace{-2cm}\leq Cxy\int_0^{\frac{\pi}{2}}\int_{2(1-\cos\theta)}^{\theta^2}\frac{\theta^{2\lambda-1}}{((x-y)^2+xyz)^{\lambda+\frac{3}{2}}}\int_0^\infty
u^{2\lambda+1}|\phi'(u)|dudzd\theta\\
&\hspace{-2cm}\leq Cxy\int_0^{\frac{\pi}{2}}\frac{\theta^{2\lambda-1}(\theta^2-2(1-\cos\theta))}{((x-y)^2+xy\theta^2)^{\lambda+\frac{3}{2}}}d\theta\leq Cxy\int_0^{\frac{\pi}{2}}\frac{\theta^{2\lambda+3}}{((x-y)^2+xy\theta^2)^{\lambda+\frac{3}{2}}}d\theta\\
&\hspace{-2cm}\leq \frac{C}{(xy)^{\lambda+\frac{1}{2}}} \leq \frac{C}{x^{2\lambda+1}}, \quad 0<\frac{x}{2}<y<2x.
\end{align*}
We have that
\begin{eqnarray*}
K_{t,1}^{2,2}(x,y) &=& -\frac{\Gamma(\lambda+\frac{1}{2})}{2\sqrt{\pi}\Gamma(\lambda)t^{2\lambda-1}}\int_0^{\frac{\pi}{2}}\theta^{2\lambda -1}\int_{\theta^2}^{2(1-\cos\theta)}\left[\phi''\left(\frac{\sqrt{(x-y)^2+xyz}}{t}\right)\frac{xy}{t^3}\right.\\
&&\left.+\phi'\left(\frac{\sqrt{(x-y)^2+xyz}}{t}\right)\right]\frac{xy}{\sqrt{(x-y)^2+xyz}t^2}dzd\theta,\quad t,x,y \in(0,\infty).
\end{eqnarray*}
Since $\phi$ satisfies (a) for $k=1,2$ we get
\begin{eqnarray*}
\int_0^\infty |K_{t,1}^{2,2}(x,y)|dt &\leq & Cxy\int_0^\infty\frac{1}{t^{2\lambda+1}}\int_0^{\frac{\pi}{2}}\theta^{2\lambda-1} \int_{2(1-\cos\theta)}^{\theta^2}\left[\left|\phi''\left(\frac{\sqrt{(x-y)^2+xyz}}{t}\right)\right|\frac{1}{t^3}\right.\\
&& \left. +\left|\phi'\left(\frac{\sqrt{(x-y)^2+xyz}}{t}\right)\right|\frac{1}{t^2\sqrt{(x-y)^2+xyz}}\right]dzd\theta dt\\
&\leq &Cxy\int_0^{\frac{\pi}{2}}\theta^{2\lambda-1}\int_{2(1-\cos\theta)}^{\theta^2}\left[\int_0^\infty |\phi''(u)|\left(\frac{u}{\sqrt{(x-y)^2+xyz}}\right)^{2\lambda+4}\right.\\
&&\left.\times \frac{\sqrt{(x-y)^2+xyz}}{u^2}du+\int_0^\infty |\phi'(u)|\left(\frac{u}{\sqrt{(x-y)^2+xyz}}\right)^{2\lambda+3}\frac{du}{u^2}\right]dzd\theta\\
&\leq& Cxy \int_0^{\frac{\pi}{2}}\theta^{2\lambda-1}\int_{2(1-\cos\theta)}^{\theta^2}\frac{dzd\theta}{((x-y)^2+xyz)^{\lambda+\frac{3}{2}}}
\left(\int_0^\infty|\phi''(u)|u^{2\lambda+2}du\right.\\
&&\left.+\int_0^\infty|\phi'(u)|u^{2\lambda+1}du \right)\\
&\leq& \frac{C}{x^{2\lambda+1}}, \quad 0 < \frac{x}{2}<y<2x.
\end{eqnarray*}
We conclude that
\begin{equation}\label{(2.8)}
V_\rho(\{\mathfrak K_{t,2}\}_{t>0})(f)(x) \leq \frac{C}{x^{2\lambda+1}}\int_{\frac{x}{2}}^{2x}|f(y)|y^{2\lambda}dy,\quad x\in(0,\infty).
\end{equation}
We have that
\begin{eqnarray*}
\partial_tK^3_{t,1}(x,y) &=&\frac{\Gamma(\lambda+\frac{1}{2})}{2\sqrt{\pi}\Gamma(\lambda)(xy)^\lambda}\left[\frac{1}{t^2}\int_{\frac{\pi^2xy}{4t^2}}^\infty u^{\lambda-1}\phi\left(\sqrt{\frac{(x-y)^2}{t^2}+u}\right)du\right.\\
&&-\frac{\pi^2xy}{2t^4}\left(\frac{\pi^2xy}{4t^2}\right)^{\lambda-1}\phi\left(\sqrt{\frac{(x-y)^2}{t^2}+\frac{\pi^2xy}{4t^2}}\right)\\
&&\left.-\frac{1}{t}\int_{\frac{\pi^2xy}{4t^2}}^\infty u^{\lambda-1}\phi'\left(\sqrt{\frac{(x-y)^2}{t^2}+u}\right)\frac{(x-y)^2}{t^3\sqrt{\frac{(x-y)^2}{t^2}+u}}du\right]\\
&=& \sum_{j=1}^3 K^{3,j}_{t,1}(x,y),\quad t,x,y\in (0,\infty).
\end{eqnarray*}
It follows that
\begin{eqnarray}\label{(2.9)}
\int_0^\infty |K^{3,1}_{t,1}(x,y)|dt &\leq& \frac{C}{(xy)^{\lambda}}\int_0^\infty \frac{1}{t^2}\int_{\frac{\pi^2xy}{4t^2}}^\infty u^{\lambda-1}\left|\phi\left(\sqrt{\frac{(x-y)^2}{t^2} +u}\right)\right|dudt\nonumber\\
&\leq& \frac{C}{|x-y|(xy)^\lambda}\int_0^\infty\frac{1}{\sqrt{s}}\int_{\frac{\pi^2xys}{4|x-y|^2}}^\infty  u^{\lambda-1}|\phi(\sqrt{s+u})|duds\nonumber\\ 
&=:&H(x,y),\quad x,y\in (0,\infty),\,x\not=y.
\end{eqnarray}
Since $\phi$ satisfies (b) for $k=0$ we obtain, for every $x\in(0,\infty)$,
\begin{equation}\label{(2.10)}
\int_{\frac{x}{2}}^{2x}H(x,y) y^{2\lambda}dy=C\int_{\frac{1}{2}}^2\frac{v^\lambda }{|1-v|}\int_0^\infty \frac{1}{\sqrt{s}}\int_{\frac{\pi^2vs}{4(1-v)^2}}^\infty u^{\lambda-1}|\phi(\sqrt{s+u})|dudsdv < \infty.
\end{equation}
By using Jensen's inequality, we get from (\ref{(2.10)}) that the operator $T_\lambda$ defined by
$$
T_\lambda(f)(x) = \int_{\frac{x}{2}}^{2x} H(x,y)f(y)y^{2\lambda}dy,\quad x\in (0,\infty),
$$
is bounded from $L^p((0,\infty),m_\lambda)$ into itself for every $1 \leq p<\infty$.

We also have that
\begin{align}\label{(2.11)}
\int_0^\infty |K^{3,2}_{t,1}(x,y)|dt &\leq C\int_0^\infty \frac{1}{t^{2\lambda+2}}\phi\left(\sqrt{\frac{(x-y)^2+\pi^2xy}{t^2}}\right)dt\nonumber\\
&\leq \frac{C}{((x-y)^2+\pi^2xy)^{\lambda+\frac{1}{2}}}\int_0^\infty u^{2\lambda}|\phi(u)|du \leq \frac{C}{x^{2\lambda+1}},\quad 0 < \frac{x}{2} <y < 2x.
\end{align}
By proceeding as in the $K^{3,1}_{t,1}$ case we deduce that
\begin{align}\label{(2.12)}
\int_0^\infty|K^{3,3}_{t,1}(x,y)| dt \leq & C\frac{|x-y|^2}{(xy)^\lambda}\int_0^\infty\frac{1}{t^4}\int_{\frac{\pi^2xy}{4t^2}}^\infty u^{\lambda-1}\left|\phi'\left(\sqrt{\frac{(x-y)^2}{t^2}+u}\right)\right|\frac{1}{\sqrt{\frac{(x-y)^2}{t^2}+u}}dudt\nonumber\\
&\leq \frac{C}{|x-y|(xy)^{\lambda}}\int_0^\infty \sqrt{s}\int_{\frac{\pi^2xys}{4|x-y|^2}}^\infty u^{\lambda-1}|\phi'(\sqrt{s+u})|\frac{du}{\sqrt{s+u}}ds\nonumber\\
&=:F(x,y),\quad x,y\in(0,\infty),\,x\not=y.
\end{align}

Since $\phi$ satisfies (b) for $k=1$ we obtain, for every $x \in (0,\infty)$,
\begin{equation}\label{(2.13)}
\int_{\frac{x}{2}}^{2x}F(x,y)y^{2\lambda}dy = C\int_{\frac{1}{2}}^2\frac{v^\lambda}{|1-v|}\int_0^\infty \sqrt{s}\int_{\frac{\pi^2vs}{4(1-v)^2}}^\infty u^{\lambda-1}\frac{|\phi'(\sqrt{u+s})|}{\sqrt{u+s}}dudsdv < \infty.
\end{equation}
Again by Jensen's inequality, (\ref{(2.13)}) implies that the operator $\mathcal{S}_\lambda$ defined by
$$
\mathcal S_\lambda(f)(x) = \int_{\frac{x}{2}}^{2x}F(x,y) f(y) y^{2\lambda}dy,\quad x \in (0,\infty),
$$
is bounded from $L^p((0,\infty),m_\lambda)$ into itself for every $1 \leq p < \infty$.
By using (\ref{(2.9)}), (\ref{(2.11)}) and (\ref{(2.12)}) we conclude that
\begin{equation}\label{(2.14)}
V_\rho(\{\mathfrak{K}_{t,3}\}_{t>0})(f)(x) \leq C\left(T_\lambda(|f|)(x) + \mathcal S_\lambda(|f|)(x) + \frac{1}{x^{2\lambda +1}}\int_{\frac{x}{2}}^{2x}|f(y)|y^{2\lambda}dy\right), \quad x\in (0,\infty),
\end{equation}
From (\ref{(2.7)}), (\ref{(2.8)}) and (\ref{(2.14)}) it follows that
\begin{equation}\label{(2.15)}
\sum^3_{j=1}V_\rho(\{\mathfrak{K}_{t,j}\}_{t>0})(f)(x) \leq C\left(T_\lambda(|f|)(x) + \mathcal S_\lambda(|f|)(x) + \frac{1}{x^{2\lambda +1}}\int_{\frac{x}{2}}^{2x}|f(y)|y^{2\lambda}dy\right), \quad x\in (0,\infty).
\end{equation}
Since $\displaystyle\frac{1}{x^{2\lambda+1}}\int_{\frac{x}{2}}^{2x}y^{2\lambda}dy = \int_{\frac{1}{2}}^2v^{2\lambda}dv$, $x\in (0,\infty)$, Jensen's inequality implies that the operator $\mathfrak{D}_\lambda$ defined by
$$
\mathfrak{D}_\lambda(f)(x)=\frac{1}{x^{2\lambda+1}}\int_{\frac{x}{2}}^{2x}f(y) y^{2\lambda}dy,\quad x\in (0,\infty),
$$
is bounded from $L^p((0,\infty ),m_\lambda )$ into itself for $1\leq p<\infty$.

By using (\ref{(2.5)}), (\ref{(2.6)}) and (\ref{(2.15)}) we can see that the operator
$$
V_\rho(\{T_{t,1,1}\}_{t>0}) + V_\rho(\{T_{t,1,3}\}_{t>0})+\sum_{j=1}^3V_\rho(\{\mathfrak K_{t,j}\}_{t>0})
$$
is bounded from $L^p((0,\infty),m_\lambda)$ into itself, for every $1<p<\infty$, and from $L^1((0,\infty),m_\lambda)$
into $L^{1,\infty}((0,\infty),m_\lambda)$.

We now study the operator $V_\rho(\{\mathfrak K_{t,4}\}_{t>0})$. We recall that
$$
\mathfrak K_{t,4}(f)(x)=\int_{\frac{x}{2}}^{2x}K^4_{t,1}(x,y)f(y)y^{2\lambda}dy,\quad x\in (0,\infty),
$$
where
$$
K^4_{t,1}(x,y) = \frac{\Gamma(\lambda+\frac{1}{2})}{2\sqrt{\pi}\Gamma(\lambda)}\frac{1}{t(xy)^\lambda}\int_0^\infty u^{\lambda -1}\phi\left(\sqrt{\left(\frac{x-y}{t}\right)^2 +u}\right)du,\quad t,x,y \in (0,\infty).
$$
We consider the even function
$$
\Phi(z)=\int_0^\infty u^{\lambda-1}\phi(\sqrt{z^2+u})du,\quad z \in \mathbb R.
$$
We define, for every $t>0$, the classical convolution operator $\mathcal S_t$ by
$$
\mathcal S_t(f) = f\ast\Phi_{(t)},
$$
where $\Phi_{(t)}(x) =  \frac{1}{t}\Phi\left(\frac{x}{t}\right)$, $x \in \mathbb R$. By taking into account properties (c) and (d), according to \cite[Lemma 2.4]{CJRW1} the $\rho$-variation operator $V_\rho(\{\mathcal S_t\}_{t>0})$ is bounded from $L^p(\mathbb R,dx)$ into itself, for every $1<p<\infty$, and from $L^1(\mathbb R,dx)$ into $L^{1,\infty}( \mathbb R,dx)$.

We can write
$$
K^4_{t,1}(x,y)= \frac{\Gamma(\lambda+\frac{1}{2})}{2\sqrt\pi\Gamma(\lambda)}\frac{1}{(xy)^\lambda}\Phi_{(t)}(x-y),\quad t,x,y \in (0,\infty),
$$
and
$$
\mathfrak K_{t,4}(f)(x)=\frac{\Gamma(\lambda+\frac{1}{2})}{2\sqrt\pi\Gamma(\lambda)}x^{-\lambda}\mathcal S_t\left(y^{\lambda} f(y)\chi_{(\frac{x}{2},2x)}(y)\right)(x),\quad t,x\in(0,\infty).
$$
Let $j \in \mathbb Z$. We have that
$$
\mathfrak K_{t,4}(f)(x)= \left(\int_{2^{j-1}}^{2^{j+2}}-\int_{2^{j-1}}^{\frac{x}{2}}- \int_{2x}^{2^{j+2}}\right) K^4_{t,1}(x,y)f(y)y^{2\lambda}dy,\quad x \in[2^j,2^{j+1}),\;t\in (0,\infty ).
$$
Then,
\begin{eqnarray*}
\mathfrak K_{t,4}(f)(x) &=& \sum_{j  \in  \mathbb Z}\chi_{[2^j,2^{j+1})}(x)\frac{\Gamma(\lambda+\frac{1}{2})}{2\sqrt\pi\Gamma(\lambda)}x^{-\lambda}\mathcal S_t(y^\lambda f(y)\chi_{[2^{j-1},2^{j+2})}(y))(x)\\
&&-\sum_{j \in\mathbb Z}\chi_{[2^j,2^{j+1})}(x) \frac{\Gamma(\lambda+\frac{1}{2})}{2\sqrt\pi\Gamma(\lambda)}\int_{2^{j-1}}^{\frac{x}{2}}(xy)^{-\lambda}\frac{1}{t}\Phi\left(\frac{x-y}{t}\right)f(y) y^{2\lambda}dy\\
&&-\sum_{j \in\mathbb Z}\chi_{[2^j,2^{j+1})}(x) \frac{\Gamma(\lambda+\frac{1}{2})}{2\sqrt\pi\Gamma(\lambda)}\int_{2x}^{2^{j+2}}(xy)^{-\lambda}\frac{1}{t}\Phi\left(\frac{x-y}{t}\right)f(y) y^{2\lambda}dy\\
&=& \sum_{i=1}^3 L_{t,4}^i(f)(x),\quad t,x \in (0,\infty).
\end{eqnarray*}
Let $1<p<\infty$. We have that
\begin{eqnarray*}
\int_0^\infty |V_\rho(\{L^1_{t,4}\}_{t>0})(f)(x)|^p x^{2\lambda}dx &&\\
& &\hspace{-5cm}\leq C\sum_{j\in \mathbb Z}\int_{2^j}^{2^{j+1}}x^{(2-p)\lambda}|V_\rho(\{\mathcal S_t\}_{t>0})(y^\lambda f(y)\chi_{[2^{j-1},2^{j+2})}(y))(x)|^pdx\\
& &\hspace{-5cm}\leq C\sum_{j\in\mathbb Z}2^{j(2-p)\lambda}\int_0^\infty |V_\rho(\{\mathcal S_t\}_{t>0})(y^\lambda f(y)\chi_{[2^{j-1},2^{j+2})}(y))(x)|^pdx\\
& &\hspace{-5cm}\leq C\sum_{j\in\mathbb Z}2^{j(2-p)\lambda}\int_{2^{j-1}}^{2^{j+2}}|y^\lambda f(y)|^pdy\leq C\sum_{j \in \mathbb Z} \int_{2^{j-1}}^{2^{j+2}} |f(y)|^py^{2\lambda}dy \\
& &\hspace{-5cm} \leq C\int_0^\infty |f(y)|^py^{2\lambda}dy,\quad f \in L^p((0,\infty),m_\lambda).
\end{eqnarray*}
We also get, for every $\alpha >0$,
\begin{align*}
m_\lambda\Big(\big\{x\in(0,\infty): V_\rho(\{L^1_{t,4}\}_{t>0})(f)(x)>\alpha\big\}\Big)&\\
& \hspace{-5.5cm}\leq \sum_{j\in Z}m_\lambda\Big(\big\{x\in[2^j,2^{j+1}):\frac{\Gamma(\lambda+\frac{1}{2})}{2\sqrt\pi\Gamma(\lambda)}x^{-\lambda} V_\rho(\{\mathcal S_t\}_{t>0})(y^\lambda f(y)\chi_{[2^{j-1},2^{j+2})}(y))(x) >\alpha\big\}\Big)\\
&  \hspace{-5.5cm} \leq C \sum_{j\in \mathbb Z}2^{2j\lambda}\left|\big\{x\in(0,\infty): V_\rho(\{\mathcal S_t\}_{t>0})(y^\lambda f(y) \chi_{[2^{j-1},2^{j+2})}(y))(x) >C 2^{\lambda j}\alpha\big\}\right|\\
& \hspace{-5.5cm} \leq \frac{C}{\alpha}\sum_{j \in \mathbb Z}2^{\lambda j}\int_{2^{j-1}}^{2^{j+2}}y^\lambda|f(y)|dy\leq \frac{C}{\alpha}\int_0^\infty |f(y)|y^{2\lambda}dy,\quad f \in L^1((0,\infty),m_\lambda).
\end{align*}
On the other hand, by using property (e) we can write
\begin{eqnarray*}
V_\rho(\{L^2_{t,4}\}_{t>0})(f)(x) &\leq & C \sum_{j \in \mathbb Z}\chi_{[2^j,2^{j+1})}(x) \int_{2^{j-1}}^{\frac{x}{2}}\left(\frac{y}{x}\right)^\lambda|f(y)|\int_0^\infty\left|\partial_t\left(\frac{1}{t}\Phi\left(\frac{x-y}{t}\right)\right)\right|dtdy\\
& & \hspace{-3cm}\leq C\sum_{j\in \mathbb Z}\chi_{[2^j,2^{j+1})}(x)\int_{2^{j-1}}^{\frac{x}{2}}\left(\frac{y}{x}\right)^\lambda|f(y)|\int_0^\infty \left(\frac{1}{t^2}\left|\Phi\left(\frac{x-y}{t}\right)\right|+ \frac{|x-y|}{t^3}\left|\Phi'\left(\frac{x-y}{t}\right)\right|\right)dtdy\\
& & \hspace{-3cm} \leq C\sum_{j\in \mathbb Z}\chi_{[2^j,2^{j+1})}(x)\int_{2^{j-1}}^{\frac{x}{2}}\left(\frac{y}{x}\right)^\lambda\frac{|f(y)|}{|x-y|}dy\int_0^\infty (|\Phi(u)|+u|\Phi'(u)|)du\\
& & \hspace{-3cm} \leq C\sum_{j\in \mathbb Z}\chi_{[2^j,2^{j+1})}(x)\frac{1}{x^{\lambda+1}}\int_{2^{j-1}}^{\frac{x}{2}}\frac{y^{2\lambda}}{2^{\lambda j}}|f(y)|dy\leq C\sum_{j\in \mathbb Z}\chi_{[2^j,2^{j+1})}(x)\frac{1}{x^{2\lambda+1}}\int_{2^{j-1}}^{\frac{x}{2}}y^{2\lambda}|f(y)|dy\\
& & \hspace{-3cm} \leq \frac{C}{x^{2\lambda+1}}\int_0^{\frac{x}{2}}y^{2\lambda}|f(y)|dy\leq CH_0^\lambda (|f|)(x),\quad x\in(0,\infty).
\end{eqnarray*}
In a similar way we get
$$
V_\rho(\{L^3_{t,4}\}_{t>0})(f)(x) \leq C\int_{2x}^\infty \frac{|f(y)|}{y}dy\leq CH_\infty (|f|)(x), \quad x\in(0,\infty).
$$
By using again the $L^p$-boundedness properties of Hardy type operators \cite[Lemmas 2.6 and 2,7]{BHNV} we conclude that the operators $V_\rho( \{L^j_{t,4}\}_{t>0})$, $j=2,3$, are bounded from $L^p((0,\infty),m_\lambda)$ into itself, for every $1<p<\infty$, and from $L^1((0,\infty),m_\lambda)$ into $L^{1,\infty}((0,\infty),m_\lambda)$.

It follows that the operator $V_\rho(\{\mathfrak K_{t,4}\}_{t>0})$ is bounded from $L^p((0,\infty),m_\lambda$) into itself, for every $1<p<\infty$ and from $L^1((0,\infty),m_\lambda)$ into $L^{1,\infty}((0,\infty),m_\lambda)$.
By putting together all the properties we have just proved we establish that $V_\rho(\{\phi_t\}_{t>0})$ is bounded from $L^p((0,\infty),m_\lambda)$ into itself, for every $1<p<\infty$, and from $L^1((0,\infty),m_\lambda)$ into $L^{1,\infty}((0,\infty),m_\lambda)$.
\end{proof}

We now consider the operator
\begin{align*}
V_\rho ^{\lambda ,\#}(\{\phi _t\}_{t>0})(f)(x)&\\
&\hspace{-2cm}=\sup_{\substack{Q\,{\rm cube}\\Q\ni x}}\esssup_{y_1,y_2\in Q}\big|V_\rho ^\lambda (\{\phi _t\}_{t>0})(f\mathcal{X}_{(3Q)^c})(y_1)-V_\rho ^\lambda (\{\phi _t\}_{t>0})(f\mathcal{X}_{(3Q)^c})(y_2)\big|,\quad x\in (0,\infty).
\end{align*}

\begin{prop}\label{Prop2.4}
Let $\lambda >-1/2$ and $\rho >2$. Assume that $\phi $ is a twice differentiable function on $(0,\infty )$ such that there exists $C>0$ for which $|\Phi'(x)|\leq C(1+x^2)^{-(\lambda +3/2)}$, $x\in (0,\infty)$, where $\Phi (x)=(2\lambda +1)\phi (x)+x\phi '(x)$, $x\in (0,\infty )$.

Then, the operator $V_\rho ^{\lambda ,\#}$ is bounded from $L^p((0,\infty ),m_\lambda )$ into itself, for every $1<p<\infty$ and from $L^1((0,\infty ),m_\lambda )$ into $L^{1,\infty}((0,\infty ),m_\lambda )$.
\end{prop}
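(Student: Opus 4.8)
We will deduce the proposition from the pointwise estimate
$$V_\rho^{\lambda,\#}(\{\phi_t\}_{t>0})(f)(x)\le C\,\mathcal M_\lambda(f)(x),\qquad x\in(0,\infty),$$
after which the stated $L^p$- and weak $L^1$-bounds follow at once from the corresponding mapping properties of the Hardy--Littlewood maximal operator $\mathcal M_\lambda$ used in Sections \ref{S2.1}--\ref{S2.2}.

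\textbf{Step 1 (passing from $\phi$ to $\Phi$).} Since $\phi_t(x)=t^{-(2\lambda+1)}\phi(x/t)$, a direct computation gives the dilation identity $t\partial_t\phi_t=-\Phi_t$, hence $\partial_t\big(g\#_\lambda\phi_t\big)=-\tfrac1t\,g\#_\lambda\Phi_t$. Fix an interval $Q=B(x_Q,r_Q)$ with $x\in Q$ and put $g=f\mathcal X_{(3Q)^c}$. Recalling $V_\rho^\lambda(\{\phi_t\}_{t>0})(g)(y)=\|g\#_\lambda\phi_\centerdot(y)\|_{v_\rho}$, the reverse triangle inequality in $v_\rho$ together with the elementary bound $\|h\|_{v_\rho}\le\|h\|_{v_1}=\int_0^\infty|h'(t)|\,dt$ (valid for $\rho\ge 1$ and $h\in C^1$), applied to $h(t)=g\#_\lambda\phi_t(y_1)-g\#_\lambda\phi_t(y_2)$, yield, for $y_1,y_2\in Q$,
$$\big|V_\rho^\lambda(\{\phi_t\}_{t>0})(g)(y_1)-V_\rho^\lambda(\{\phi_t\}_{t>0})(g)(y_2)\big|\le\int_0^\infty\big|g\#_\lambda\Phi_t(y_1)-g\#_\lambda\Phi_t(y_2)\big|\frac{dt}{t}.$$

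\textbf{Step 2 (the key kernel estimate).} The heart of the argument is
$$\int_0^\infty\big|\partial_w\big({}_\lambda\tau_w(\Phi_t)(z)\big)\big|\frac{dt}{t}\le\frac{C}{|w-z|\,m_\lambda(B(w,|w-z|))},\qquad w\ne z.$$
To prove it, differentiate under the integral sign in the definition of ${}_\lambda\tau_w(\Phi_t)(z)$, estimate $|w-z\cos\theta|\le(w^2+z^2-2wz\cos\theta)^{1/2}$ as already done in the paper, and after the substitution $u=(w^2+z^2-2wz\cos\theta)^{1/2}/t$ in the $t$-integral arrive at the bound $C\big(\int_0^\infty u^{2\lambda+1}|\Phi'(u)|\,du\big)\int_0^\pi\sin^{2\lambda-1}\theta\,(w^2+z^2-2wz\cos\theta)^{-(\lambda+1)}\,d\theta$. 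The first factor is finite precisely because $|\Phi'(u)|\le C(1+u^2)^{-(\lambda+3/2)}$ and $\lambda>-1/2$, while the $\theta$-integral is controlled by $\big(|w-z|\,m_\lambda(B(w,|w-z|))\big)^{-1}$ by \cite[Lemma 3.1]{BCN}, exactly as in the derivation of \eqref{2.2}.

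\textbf{Step 3 (conclusion).} Writing $g\#_\lambda\Phi_t(y_1)-g\#_\lambda\Phi_t(y_2)=\int_{(3Q)^c}\big(\int_{y_1}^{y_2}\partial_w({}_\lambda\tau_w(\Phi_t)(z))\,dw\big)f(z)z^{2\lambda}\,dz$, interchanging the order of integration (Minkowski), inserting the Step 2 estimate, using $|y_1-y_2|\le 2r_Q$ and the doubling of $m_\lambda$ to recentre the balls at $y_1$, and splitting $(3Q)^c$ into the dyadic annuli $2^{k+1}\widetilde Q\setminus 2^k\widetilde Q$, $k\ge1$, with $\widetilde Q=B(y_1,r_Q)$ — exactly as in the proof of Proposition \ref{propo 2.2} — one bounds the right-hand side of Step 1 by $C\mathcal M_\lambda(f)(x)$, and the proof is complete. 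The main obstacle is the kernel estimate of Step 2: the hypothesis controls only $\Phi'$ and not $\phi,\phi',\phi''$ separately, so it is essential to first replace $\partial_t(g\#_\lambda\phi_t)$ by $\tfrac1t\,g\#_\lambda\Phi_t$ via the dilation identity; once this is done the remaining computation is a routine adaptation of the square-function argument of Proposition \ref{propo 2.2}, with $L^2((0,\infty),dt/t)$ replaced by $L^1((0,\infty),dt/t)$.
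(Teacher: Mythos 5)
Your argument is correct, and its skeleton (dilation identity $t\partial_t\phi_t=-\Phi_t$, reduction of the $v_\rho$-norm to the total variation in $t$, then annuli and doubling to land on $\mathcal M_\lambda$) coincides with the paper's; the genuine difference is in how the spatial smoothness of the kernel is quantified. The paper applies Minkowski first and then invokes a two-point estimate obtained ``as in the proof of \cite[(2.8)]{YY}'', namely
$\frac1t\big|{}_\lambda\tau_{y_1}(\Phi_t)(z)-{}_\lambda\tau_{y_2}(\Phi_t)(z)\big|\leq C\,\frac{|y_1-y_2|}{(t+|y_1-z|)^2}\big(m_\lambda(B(y_1,t))+m_\lambda(B(y_1,|y_1-z|))\big)^{-1}$,
which it then integrates in $t$; you instead differentiate in the spatial variable, use the fundamental theorem of calculus, and estimate $\int_0^\infty|\partial_w({}_\lambda\tau_w(\Phi_t)(z))|\frac{dt}{t}$ directly by the change of variables $u=\sqrt{w^2+z^2-2wz\cos\theta}/t$ together with \cite[Lemma 3.1]{BCN}, exactly mirroring the derivation of \eqref{2.2} and the structure of Proposition \ref{propo 2.2}, with $L^2((0,\infty),\frac{dt}{t})$ replaced by $L^1((0,\infty),\frac{dt}{t})$. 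Your route is more self-contained (no appeal to the \cite{YY} two-point machinery) and in fact only uses $\int_0^\infty u^{2\lambda+1}|\Phi'(u)|\,du<\infty$, which the stated pointwise hypothesis implies for $\lambda>-1/2$; the paper's route retains a pointwise-in-$t$ bound with the extra factor $t/(t+|y_1-z|)^2$ and the $m_\lambda(B(y_1,t))$ term, which is sharper information before the $t$-integration, though it is not needed for the conclusion. The small points you should still record are the routine justifications of differentiating under the integral sign (in $t$ and in $w$) and the Fubini interchange in Step 3, both of which are legitimate here because of the absolute convergence furnished by your Step 2 bound, and the comparability $|w-z|\sim|y_1-z|$, $m_\lambda(B(w,|w-z|))\sim m_\lambda(B(y_1,|y_1-z|))$ for $w\in Q$, $z\notin 3Q$, which you correctly flag via doubling.
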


\begin{proof}
Let $x\in (0,\infty )$. Suppose that $Q$ is an interval in $(0,\infty )$ such that $x\in Q$.

By using Minkowski inequality we obtain
\begin{align*}
    \big|V_\rho ^\lambda(\{\phi _t\}_{t>0})(f\mathcal{X}_{(3Q)^c})(y_1)-V_\rho ^\lambda (\{\phi _t\}_{t>0})(f\mathcal{X}_{(3Q)^c})(y_2)\big|&\\
    &\hspace{-6cm}\leq \int_{(0,\infty)\setminus (3Q)}|f(z)|z^{2\lambda } \|\{\,_\lambda \tau_{y_1}(\phi _t)(z)-\,_\lambda\tau_{y_2}(\phi _t)(z)\}_{t>0}\|_{v_\rho}dz,\quad y_1,y_2\in Q.
\end{align*}
We get as above that
$$
\|\{\,_\lambda\tau_{y_1}(\phi _t)(z)-\,_\lambda\tau_{y_2}(\phi _t)(z)\}_{t>0}\|_{v_\rho} \leq \int_0^\infty \big|\partial _t(\,_\lambda \tau_{y_1}(\phi _t)(z)-\,_\lambda\tau_{y_2}(\phi _t)(z))\big|dt.
$$
We can write 
\begin{align*}
\partial _t\,_\lambda\tau_{y}(\phi _t)(z)&=\frac{\Gamma (\lambda +1/2)}{\sqrt{\pi}\Gamma (\lambda)}\partial _t\left[\frac{1}{t^{2\lambda +1}}\int_0^\pi \phi \Big(\frac{\sqrt{z^2+y^2-2zy\cos \theta}}{t}\Big)\sin ^{2\lambda -1}\theta d\theta\right]\\
&=-\frac{\Gamma (\lambda +1/2)}{\sqrt{\pi}\Gamma (\lambda )}\left(\frac{2\lambda+1}{t^{2\lambda +2}}\int_0^\pi \phi \Big(\frac{\sqrt{z^2+y^2-2zy\cos \theta}}{t}\Big)\sin ^{2\lambda -1}\theta d\theta \right.\\
&\quad \left. +\frac{1}{t^{2\lambda +1}}\int_0^\pi \frac{\sqrt{z^2+y^2-2zy\cos \theta}}{t^2}\phi '\Big(\frac{\sqrt{z^2+y^2-2zy\cos \theta}}{t}\Big)\sin ^{2\lambda -1}\theta d\theta\right)\\
&=-\frac{\Gamma (\lambda +1/2)}{\sqrt{\pi}\Gamma (\lambda )}\frac{1}{t^{2\lambda +2}}\int_0^\pi \Phi \Big(\frac{\sqrt{z^2+y^2-2zy\cos \theta}}{t}\Big)\sin ^{2\lambda -1}\theta d\theta,\quad z,y,t\in(0,\infty),
\end{align*}
where $\Phi (z)=(2\lambda +1)\phi (z)+z\phi'(z)$, $z\in (0,\infty)$. Then,
$$
\partial _t\,_\lambda\tau_{y}(\phi _t)(z)=-\frac{1}{t}\,_\lambda\tau_{y}(\Phi _t)(z),\quad z,y,t\in (0,\infty).
$$
By proceeding as in the proof of \cite[(2.8)]{YY}, since $|\Phi'(x)|\leq C(1+x^2)^{-\lambda -3/2}$, $x\in (0,\infty )$, we can see that
\begin{align*}
    \big|\partial_t(\,_\lambda\tau_{y_1}(\phi _t)(z)-\,_\lambda\tau_{y_2}(\phi _t)(z))\big|&= \frac{1}{t}\big|\,_\lambda\tau_{y_1}(\Phi _t)(z)-\,_\lambda\tau_{y_2}(\Phi _t)(z)\big|\\
    &\hspace{-4cm}\leq \frac{C}{m_\lambda (B(y_1, t))+m_\lambda (B(y_1,|y_1-z|))}\frac{|y_1-y_2|}{(t+|y_1-z|)^2},\quad z\in (0,\infty )\setminus (3Q),\;y_1,y_2\in Q,\;t>0.
\end{align*}
We obtain
$$
\int_0^\infty \big|\partial _t(\,_\lambda \tau _{y_1}(\phi _t)(z)-\,_\lambda \tau _{y_2}(\phi _t)(z))\big|dt\leq C\frac{|y_1-y_2|}{m_\lambda (B(y_1,|y_1-z|))|y_1-z|},\quad z\in (0,\infty )\setminus (3Q),\;y_1,y_2\in Q.
$$
It follows that
\begin{align*}
    \int_{(0,\infty )\setminus (3Q)} \|\{\,_\lambda\tau_{y_1}(\phi _t)(z)-\,_\lambda\tau_{y_2}(\phi _t)(z)\}_{t>0}\|_{v_\rho}|f(z)|z^{2\lambda }dz&\\
    &\hspace{-6cm}\leq C|y_1-y_2|\int_{(0,\infty )\setminus (3Q)}\frac{|f(z)|z^{2\lambda }}{m_\lambda (B(y_1,|y_1-z|)|y_1-z|}dz\\
    &\hspace{-6cm}\leq C|y_1-y_2|\sum_{k=1}^\infty \int_{2^{k+1}Q\setminus 2^kQ}\frac{|f(z)|z^{2\lambda }}{m_\lambda (2^kQ)2^k|Q|}dz\\
    &\hspace{-6cm}\leq C|y_1-y_2|\mathcal{M}_\lambda (f)(x)\sum_{k=1}^\infty \frac{1}{2^k|Q|}\leq C\mathcal{M}_\lambda (f)(x),\quad y_1,y_2\in Q.
\end{align*}
Then,
$$
V_\rho ^{\lambda ,\#}(\{\phi _t\}_{t>0})(f)(x)\leq C\mathcal{M}_\lambda (f)(x).
$$
The $L^p$-boundedness properties of $V_\rho ^{\lambda ,\#}(\{\phi _t\}_{t>0})$ follow from the corresponding $L^p$-boundedness properties of the Hardy-Littlewood maximal function $\mathcal{M}_\lambda$.
\end{proof}
The proof of the Theorem \ref{Th1.1} for $V_\rho ^\lambda (\{\phi _t\}_{t>0})$ can be finished as the proof of Theorem \ref{Th1.1} for $g_\phi ^\lambda $.

\subsection{Applications of Theorem \ref{Th1.1}.}\label{S2.4}
Note that if $\phi\in C^2 (0,\infty )$ is such that $|\phi (x)|\leq C(1+x)^{-2\lambda -2}$, $|\phi '(x)|\leq C(1+x)^{-2\lambda -3}$ and $|\phi ''(x)|\leq C(1+x)^{-2\lambda -4}$, $x\in (0,\infty)$, then $\phi$ satisfies all the conditions $(a)-(h)$ in Proposition \ref{propo 2.3}.

We now present some special functions $\phi$ satisfying the properties in the propositions in this section.

\noindent {\bf (I)} The Poisson semigroup $\{P_t^\lambda \}_{t>0}$ associated with the operator $\Delta _\lambda$ is given by
$$
P_t^\lambda (f)=P _t^\lambda \# _\lambda f,\quad t>0,
$$
where $P^\lambda (x)=\frac{2\lambda \Gamma (\lambda )}{\Gamma (\lambda +1/2)\sqrt{\pi}}(1+x^2)^{-\lambda -1}$, $x\in (0,\infty )$.

It is clear that
$$
\left|\frac{\partial ^k}{\partial x^k}P^\lambda (x)\right|\leq \frac{C}{(1+x)^{2\lambda +2+k}},\quad x\in (0,\infty ),\;k\in \mathbb{N}.
$$
We have that
$$
t\partial _tP_t^\lambda (x)=\frac{1}{t^{2\lambda +1}}\left(-(2\lambda +1)P^\lambda \Big(\frac{x}{t}\Big)-\frac{x}{t}(P ^\lambda )'\Big(\frac{x}{t}\Big)\right)=\psi _t^{\lambda ,1}(x),\quad t,x\in (0,\infty ),
$$
where $\psi ^{\lambda,1} (x)=-(2\lambda +1)P^\lambda (x)-x(P^\lambda )'(x)$, $x\in (0,\infty )$. Then,
$$
\left|\frac{\partial ^k}{\partial x^k}\psi ^{\lambda,1} (x)\right|\leq \frac{C}{(1+x)^{2\lambda +2+k}},\quad x\in (0,\infty ),\;k\in \mathbb{N}.
$$
By proceeding inductively we conclude that, for every $m\in \mathbb{N}$,
$$
t^m\partial_t^mP_t^\lambda (x)=\psi _t^{\lambda ,m}(x),\quad t,x\in (0,\infty ),
$$
where, for certain $c_k^{\lambda ,m}\in \mathbb{R}$, $k=0,...,m$,
$$
\psi ^{\lambda ,m}(x)=\sum_{k=0}^mc_k^{\lambda ,m}x^k(P^\lambda )^{(k)}(x),\quad x\in (0,\infty ).
$$
We have that
$$
\left|\frac{\partial ^r}{\partial x^r}\psi ^{\lambda,m} (x)\right|\leq \frac{C}{(1+x)^{2\lambda +2+r}},\quad x\in (0,\infty ),\;r\in \mathbb{N}.
$$
Let $m\in \mathbb{N}$. According to \cite[(19) p. 24]{EMOT}
\begin{align*}
h_\lambda (\psi ^{\lambda,m})(x)&=h_\lambda (t^m\partial _t^m(P_t^\lambda )_{|t=1})(x)=t^m\partial _t^mh_\lambda (P _t^\lambda )(x)_{|t=1}\\&=\frac{2^{1/2-\lambda}}{\Gamma (\lambda +1/2)}t^m\partial _t^m(e^{-tx})_{|t=1}=p(x)e^{-x},\quad x\in (0,\infty ),
\end{align*}
where $p$ is a polynomial and $p(0)=0$ provided that $m\geq 1$. Hence
$$
\int_0^\infty |h_\lambda (\psi ^{\lambda ,m})(x)|^2\frac{dx}{x}<\infty,\quad \mbox{ when }m\geq 1.
$$

From Theorem \ref{Th1.1} we deduce the following result.
\begin{cor}\label{Cor2.1}
Let $\lambda >0$, $\rho >2$, $m\in \mathbb{N}$, $1<p<\infty$ and $w\in A_p^\lambda (0,\infty )$.

(a) The maximal operator $P_*^{\lambda ,m}$ defined by
$$
P_*^{\lambda ,m}(f)=\sup_{t>0}|t^m\partial _t^mP_t^\lambda (f)|
$$
is bounded from $L^p((0,\infty ),w,m_\lambda)$ into itself and
$$
\|P_*^{\lambda , m}(f)\|_{L^p((0,\infty ),w,m_\lambda )}\leq C[w]_{A_p^\lambda }^{\max\{1/(p-1), 1\}}\|f\|_{L^p((0,\infty ),w,m_\lambda )},
$$
for every $f\in L^p((0,\infty) ,w,m_\lambda )$.

(b) The Littlewood-Paley function $g_{\{P_t^\lambda \}_{t>0}}^{\lambda ,m}$ defined by
$$
g_{\{P_t^\lambda \}_{t>0}}^{\lambda ,m}(f)(x)=\left(\int_0^\infty \Big|t^m\partial _t^mP_t^\lambda (f)(x)\Big|^2\frac{dt}{t}\right)^{1/2},\quad x\in (0,\infty ),
$$
with $m\geq 1$, is bounded from $L^p((0,\infty) ,w,m_\lambda )$ into itself and
$$
\|g_{\{P_t^\lambda \}_{t>0}}^{\lambda , m}(f)\|_{L^p((0,\infty ),w,m_\lambda )}\leq C[w]_{A_p^\lambda }^{\max\{1/(p-1), 1\}}\|f\|_{L^p((0,\infty ),w,m_\lambda )},
$$
for every $f\in L^p((0,\infty) ,w,m_\lambda )$.

(c) The variation operator $V_\rho (\{t^m\partial _t^mP_t^\lambda \}_{t>0})$ is bounded from $L^p((0,\infty) ,w,m_\lambda )$ into itself and 
$$
\|V_\rho (\{t^m\partial _t^mP_t^\lambda \}_{t>0})(f)\|_{L^p((0,\infty ),w,m_\lambda )}\leq C[w]_{A_p^\lambda }^{\max\{1/(p-1), 1\}}\|f\|_{L^p((0,\infty ),w,m_\lambda )},
$$
for every $f\in L^p((0,\infty) ,w,m_\lambda )$.

Here $C>0$ does not depend on $w$.
\end{cor}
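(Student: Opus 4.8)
The plan is to apply Theorem~\ref{Th1.1}, together with the auxiliary propositions proved in Section~\ref{S2}, to the function $\phi=\psi^{\lambda,m}$ introduced just above, for which $t^m\partial_t^mP_t^\lambda(f)=(\psi^{\lambda,m})_t\#_\lambda f$. Consequently
$$
P_*^{\lambda,m}=(\psi^{\lambda,m})_*^\lambda,\qquad g_{\{P_t^\lambda\}_{t>0}}^{\lambda,m}=g_{\psi^{\lambda,m}}^\lambda,\qquad V_\rho(\{t^m\partial_t^mP_t^\lambda\}_{t>0})=V_\rho^\lambda(\{(\psi^{\lambda,m})_t\}_{t>0}),
$$
so the whole matter reduces to verifying that $\psi^{\lambda,m}$ satisfies the hypotheses required in Section~\ref{S2}.

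First I would record that $\psi^{\lambda,m}\in Z^\lambda$ for every $m\in\mathbb N$. The bounds $|\partial_x^r\psi^{\lambda,m}(x)|\le C(1+x)^{-2\lambda-2-r}$, $r\in\mathbb N$, obtained above yield conditions (i) and (iii) at once, since $(1+x)^{-2\lambda-2}\le C(1+x^2)^{-\lambda-1}$ and $(1+x)^{-2\lambda-4}\le C(1+x^2)^{-\lambda-2}$; for condition (ii) they already give the bound when $x\ge1$, while for $x\in(0,1)$ one uses that $\psi^{\lambda,m}$ extends to an even function on $\mathbb R$ (each summand $x^k(P^\lambda)^{(k)}(x)$ is even), so that $(\psi^{\lambda,m})'(0)=0$ and hence $|(\psi^{\lambda,m})'(x)|\le Cx$ there, by the bound on $(\psi^{\lambda,m})''$. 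Granting $\psi^{\lambda,m}\in Z^\lambda$, parts (a) and (c) follow directly from Theorem~\ref{Th1.1}. For part (a) only condition (i) is needed, as noted in \S\ref{S2.1}; for part (c) one may, alternatively, verify the hypotheses of Propositions~\ref{propo 2.3} and~\ref{Prop2.4} directly, the former from the pointwise bounds on $\psi^{\lambda,m}$ and its first two derivatives, the latter, namely $|\Phi'(x)|\le C(1+x^2)^{-(\lambda+3/2)}$ with $\Phi=(2\lambda+1)\psi^{\lambda,m}+x(\psi^{\lambda,m})'$, from the bounds on $(\psi^{\lambda,m})'$ and $(\psi^{\lambda,m})''$.

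For part (b) the extra ingredient is condition (c) of Proposition~\ref{propo 2.1}, and this is exactly where the hypothesis $m\ge1$ enters: by the computation preceding the corollary one has $h_\lambda(\psi^{\lambda,m})(x)=p(x)e^{-x}$ with $p$ a polynomial and $p(0)=0$ when $m\ge1$, so $\int_0^\infty|h_\lambda(\psi^{\lambda,m})(x)|^2\frac{dx}{x}<\infty$; conditions (a) and (b) of Proposition~\ref{propo 2.1} (the latter being immediate from $|(\psi^{\lambda,m})'(u)|\le C(1+u)^{-2\lambda-3}$ and $\lambda>0$) and the hypothesis of Proposition~\ref{propo 2.2} hold as well, whence the sparse-domination argument of \S\ref{S2.2} applies verbatim and gives (b). I do not expect a genuine obstacle: the substantive work — the decay estimates for $\psi^{\lambda,m}$, the computation of its Hankel transform, and the $L^p$- and sparse-domination results of Section~\ref{S2} — is already in place, and the only points calling for attention are the verification of the hypotheses of Proposition~\ref{propo 2.3} for $\phi=\psi^{\lambda,m}$ and the observation that $m\ge1$ is genuinely needed only for the Littlewood--Paley function, since $h_\lambda(\psi^{\lambda,0})=h_\lambda(P^\lambda)$ is a nonzero multiple of $e^{-x}$, which fails to belong to $L^2((0,\infty),\frac{dx}{x})$ near the origin.
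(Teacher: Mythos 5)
Your proposal is correct and follows essentially the same route as the paper: identify $t^m\partial_t^m P_t^\lambda(f)=(\psi^{\lambda,m})_t\#_\lambda f$ with $\psi^{\lambda,m}(x)=\sum_{k=0}^m c_k^{\lambda,m}x^k(P^\lambda)^{(k)}(x)$, use the decay bounds $|\partial_x^r\psi^{\lambda,m}(x)|\le C(1+x)^{-2\lambda-2-r}$ to invoke Theorem \ref{Th1.1} (via the hypotheses of Propositions \ref{propo 2.1}--\ref{Prop2.4}), and use $h_\lambda(\psi^{\lambda,m})(x)=p(x)e^{-x}$ with $p(0)=0$ for $m\ge 1$ to get the square-function condition $\int_0^\infty|h_\lambda(\psi^{\lambda,m})(u)|^2\frac{du}{u}<\infty$. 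Your explicit check of condition (ii) of $Z^\lambda$ near the origin via evenness, and the remark on why $m\ge1$ is needed only in part (b), are details the paper leaves implicit but are consistent with its argument.
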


\noindent {\bf (II)} Suppose that $\phi \in S(0,\infty )$, the Schwartz class in $(0,\infty )$. It is clear that, for every $k\in \mathbb{N}$, $|\phi ^{(k)}(x)|\leq C(1+x)^{-2\lambda -2-k}$, $x\in (0,\infty )$. According to \cite[Satz 5 and p. 201]{Al}, $h_\lambda (\phi)\in S(0,\infty)$. Then, by \cite[Lemma 3, p. 203]{Al} there exists $C>0$ such that $|h_\lambda (\phi)(x)|\leq Cx^2$, $x\in (0,1)$, provided that $h_\lambda (\phi)(0)=0$.

\begin{cor}
Let $\lambda >0$, $\rho >2$, $1<p<\infty$ and $w\in A_p^\lambda (0,\infty)$. Then,
$$
\|Tf\|_{L^p((0,\infty ),w,m_\lambda )}\leq C[w]_{A_p^\lambda}^{\max\{1/(p-1),1\}}\|f\|_{L^p((0,\infty ),w,m_\lambda )},
$$
for every $f\in L^p((0,\infty ),w,m_\lambda )$, where $C$ does not depend on $w$, when $T=\phi_*^\lambda $, $T=V_\rho ^\lambda (\{\phi _t\}_{t>0})$, with $\phi \in S(0,\infty )$, and $T=g_\phi ^\lambda $, with $\phi \in S(0,\infty )$ and $h_\lambda (\phi)(0)=0$.
\end{cor}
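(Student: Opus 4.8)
The plan is to check that every $\phi\in S(0,\infty)$ meets the structural hypotheses under which Theorem~\ref{Th1.1}, and the propositions used in its proof in Section~\ref{S2}, apply. Recall that for $\phi\in S(0,\infty)$ and every $k\in\mathbb N$, $N>0$, there is $C_{k,N}>0$ with $|\phi^{(k)}(x)|\leq C_{k,N}(1+x)^{-N}$, $x\in(0,\infty)$. Choosing $N$ large enough we obtain in particular $|\phi(x)|\leq C(1+x^2)^{-\lambda-1}$, $|\phi'(x)|\leq Cx(1+x^2)^{-\lambda-2}$ and $|\phi''(x)|\leq C(1+x^2)^{-\lambda-2}$, $x\in(0,\infty)$, so that $\phi\in Z^\lambda$.

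For $T=\phi_*^\lambda$ the estimate then follows immediately from Section~\ref{S2.1}, where only property (i) of $Z^\lambda$ is used. For $T=V_\rho^\lambda(\{\phi_t\}_{t>0})$ it follows from Section~\ref{S2.3}: by the remark opening Section~\ref{S2.4}, the decay bounds $|\phi(x)|\leq C(1+x)^{-2\lambda-2}$, $|\phi'(x)|\leq C(1+x)^{-2\lambda-3}$, $|\phi''(x)|\leq C(1+x)^{-2\lambda-4}$ (all consequences of Schwartz decay) imply the hypotheses of Proposition~\ref{propo 2.3}; moreover, writing $\Phi(x)=(2\lambda+1)\phi(x)+x\phi'(x)$ we have $\Phi'(x)=(2\lambda+2)\phi'(x)+x\phi''(x)$, which decays faster than any power, so $|\Phi'(x)|\leq C(1+x^2)^{-(\lambda+3/2)}$ and Proposition~\ref{Prop2.4} applies. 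The argument of Section~\ref{S2.3}, via \cite[Theorem 1.1 and Proposition 4.1]{Lori}, then yields the conclusion.

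For $T=g_\phi^\lambda$ we must verify conditions (a), (b), (c) of Proposition~\ref{propo 2.1} together with the hypothesis of Proposition~\ref{propo 2.2}. Condition (a) is contained in $\phi\in Z^\lambda$. Condition (b) reads $\int_0^\infty u^{4\lambda+3}|\phi'(u)|^2\,du<\infty$, which holds since $\phi'$ is bounded near $0$ (the integrand being $O(u^{4\lambda+3})$ with $4\lambda+3>0$) and decays rapidly at infinity; this is also precisely the hypothesis of Proposition~\ref{propo 2.2}. For condition (c), $h_\lambda$ maps $S(0,\infty)$ into itself by \cite[Satz 5 and p. 201]{Al}, so $h_\lambda(\phi)\in S(0,\infty)$ decays rapidly at infinity, and since $h_\lambda(\phi)(0)=0$, \cite[Lemma 3, p. 203]{Al} gives $|h_\lambda(\phi)(u)|\leq Cu^2$ on $(0,1)$; hence $\int_0^\infty |h_\lambda(\phi)(u)|^2\frac{du}{u}<\infty$. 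With Propositions~\ref{propo 2.1} and \ref{propo 2.2} in hand, the proof concludes exactly as in Section~\ref{S2.2}.

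No step here is a genuine obstacle: the corollary simply records that Schwartz decay is strong enough to meet every structural assumption in Section~\ref{S2}. The only point requiring care is condition (c) of Proposition~\ref{propo 2.1} for $g_\phi^\lambda$, which is exactly why the extra hypothesis $h_\lambda(\phi)(0)=0$ is imposed in that case: without it $h_\lambda(\phi)$ need not vanish at the origin and $\int_0^\infty |h_\lambda(\phi)(u)|^2\frac{du}{u}$ can diverge at $u=0$ (as happens, for instance, for the Poisson kernel $P^\lambda$, whose Hankel transform is a constant multiple of $e^{-x}$).
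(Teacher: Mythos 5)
Your proposal is correct and follows essentially the same route as the paper: Schwartz decay is used to check membership in $Z^\lambda$ and the hypotheses of Propositions \ref{propo 2.1}--\ref{Prop2.4} (via the remark opening Section \ref{S2.4}), while the hypothesis $h_\lambda(\phi)(0)=0$ together with $h_\lambda(\phi)\in S(0,\infty)$ and \cite[Satz 5, Lemma 3]{Al} gives $|h_\lambda(\phi)(u)|\leq Cu^2$ near the origin, which is exactly how the paper verifies condition (c) for $g_\phi^\lambda$. Your closing observation about the role of the vanishing condition (illustrated by the Poisson-type kernel) matches the paper's treatment of case (I) as well.
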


The heat semigroup generated by $-\Delta _\lambda $ is $\{W_t^\lambda \}_{t>0}$ where
$$
W_t^\lambda (f)=W_{\sqrt{2t}}^\lambda \# _\lambda f,\quad t>0,
$$
being $W^\lambda (x)=\frac{2^{(1-2\lambda )/2}}{\Gamma (\lambda +1/2)}e^{-x^2/2}$, $x\in (0,\infty )$. We have that $W ^\lambda \in S(0,\infty)$ and according to \cite[(10) p. 29]{EMOT}, $h_\lambda (W^\lambda )(x)=\frac{2^{(1-2\lambda )/2}}{\Gamma (\lambda +1/2)}e^{-x^2/2}$, $x\in (0,\infty )$. By proceeding as in the Poisson case we deduce that $h_\lambda ((t^m\partial _t^mW_t^\lambda) _{|t=1})(0)=0$, for every $m\in \mathbb{N}$, $m>0$.

\begin{cor}
Let $\lambda >0$, $\rho >2$, $m\in \mathbb{N}$, $1<p<\infty$ and $w\in A_p^\lambda (0,\infty )$. Assertions $(a)$, $(b)$ and $(c)$ in Corollary \ref{Cor2.1} hold when the Poisson semigroup $\{P_t^\lambda \}_{t>0}$ is replaced by the heat semigroup $\{W_t^\lambda \}_{t>0}$.
\end{cor}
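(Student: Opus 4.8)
The proof will run completely parallel to the Poisson case treated in part (I), so the plan is to produce the auxiliary function in $Z^\lambda$ attached to the heat semigroup and then quote Theorem \ref{Th1.1} and Corollary \ref{Cor2.1}. Since $W_t^\lambda(f)=(W^\lambda)_{\sqrt{2t}}\#_\lambda f$ with $W^\lambda(x)=\frac{2^{1/2-\lambda}}{\Gamma(\lambda+1/2)}e^{-x^2/2}$, the first step is to differentiate the $\sqrt{2t}$-dilation exactly as was done for $P^\lambda$: if $\mathcal L\Psi(x)=-\tfrac12\big[(2\lambda+1)\Psi(x)+x\Psi'(x)\big]$ then $t\partial_t\big[(\Psi)_{\sqrt{2t}}\big]=(\mathcal L\Psi)_{\sqrt{2t}}$, and since $t^m\partial_t^m=\prod_{j=0}^{m-1}(t\partial_t-j)$ is a polynomial in $t\partial_t$, this yields
\[
t^m\partial_t^m W_t^\lambda(f)=(\Psi^{\lambda,m})_{\sqrt{2t}}\#_\lambda f,\qquad \Psi^{\lambda,m}=\Big(\textstyle\prod_{j=0}^{m-1}(\mathcal L-j)\Big)W^\lambda ,
\]
with $\Psi^{\lambda,0}=W^\lambda$. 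Iterating $\mathcal L$ from the Gaussian shows $\Psi^{\lambda,m}(x)=Q_m(x)e^{-x^2/2}$ with $Q_m$ an even polynomial; hence $\Psi^{\lambda,m}\in S(0,\infty)$, $\Psi^{\lambda,m}$ and all its derivatives have Gaussian decay, and $(\Psi^{\lambda,m})'$ is odd, so it is of the form $xR_m(x)e^{-x^2/2}$. Together these give $|\Psi^{\lambda,m}(x)|\le C(1+x^2)^{-\lambda-1}$, $|(\Psi^{\lambda,m})'(x)|\le Cx(1+x^2)^{-\lambda-2}$ and $|(\Psi^{\lambda,m})''(x)|\le C(1+x^2)^{-\lambda-2}$, i.e. $\Psi^{\lambda,m}\in Z^\lambda$ --- the parity of $Q_m$ being precisely what supplies the extra factor $x$ in condition (ii) of $Z^\lambda$.

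For the square function (part (b)) one also needs the vanishing condition used in the proof of Theorem \ref{Th1.1} for $g_\phi^\lambda$. From $h_\lambda(W^\lambda)(x)=\frac{2^{1/2-\lambda}}{\Gamma(\lambda+1/2)}e^{-x^2/2}$ (\cite[(10) p. 29]{EMOT}) and $h_\lambda(g_s)(x)=h_\lambda(g)(sx)$ one gets $h_\lambda(W_t^\lambda(f))(x)=\frac{2^{1/2-\lambda}}{\Gamma(\lambda+1/2)}e^{-tx^2}h_\lambda(f)(x)$, and applying $t^m\partial_t^m$ at $t=1$ gives $h_\lambda(\Psi^{\lambda,m})(y)=c_m\,y^{2m}e^{-y^2/2}$ for a constant $c_m$. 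Thus $h_\lambda(\Psi^{\lambda,m})(0)=0$ and $\int_0^\infty|h_\lambda(\Psi^{\lambda,m})(u)|^2\frac{du}{u}<\infty$ exactly when $m\ge1$, which is why part (b) is stated only for $m\ge1$; this is the fact quoted in the text immediately before the statement.

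It remains to transfer the conclusions. The substitution $s=\sqrt{2t}$ is a monotone bijection of $(0,\infty)$ with $\frac{dt}{t}=2\frac{ds}{s}$, so
\[
\sup_{t>0}\big|t^m\partial_t^m W_t^\lambda(f)\big|=(\Psi^{\lambda,m})_*^\lambda(f),\qquad g^{\lambda,m}_{\{W_t^\lambda\}_{t>0}}(f)=\sqrt2\;g^\lambda_{\Psi^{\lambda,m}}(f),
\]
and $V_\rho(\{t^m\partial_t^m W_t^\lambda\}_{t>0})(f)=V_\rho^\lambda(\{(\Psi^{\lambda,m})_t\}_{t>0})(f)$. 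Applying Theorem \ref{Th1.1} to $\phi=\Psi^{\lambda,m}\in Z^\lambda$ (together with $h_\lambda(\Psi^{\lambda,m})(0)=0$ for the square function when $m\ge1$) then produces the bound $C[w]_{A_p^\lambda}^{\max\{1/(p-1),1\}}\|f\|_{L^p((0,\infty),w,m_\lambda)}$ for each of the three operators, which is assertions (a), (b) and (c). I do not expect any serious obstacle: the only points needing care are the bookkeeping of the $\sqrt{2t}$ scaling when reducing to the $t$-dilation framework of $Z^\lambda$, and the parity/vanishing observations about $\Psi^{\lambda,m}$.
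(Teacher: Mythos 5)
Your proposal is correct and follows essentially the same route as the paper: identify $t^m\partial_t^m W_t^\lambda$ as a $\sqrt{2t}$-dilation of a fixed polynomial-times-Gaussian function, note that this function lies in $S(0,\infty)$ (hence in $Z^\lambda$ and satisfying the hypotheses of the propositions behind Theorem \ref{Th1.1}), and use the Hankel transform computation $h_\lambda(\Psi^{\lambda,m})(u)=c_m u^{2m}e^{-u^2/2}$ to get the vanishing at the origin needed for the square function when $m\geq 1$. The paper compresses all of this into ``by proceeding as in the Poisson case''; your write-up merely makes explicit the Euler-operator bookkeeping for the $\sqrt{2t}$ scaling and the parity argument, which is exactly what that phrase refers to.
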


\noindent {\bf (III)} We define, for every $t>0$ the Bochner-Riesz means $B_t^{\lambda ,\alpha}$ associated with the operator $\Delta _\lambda$ by
$$
B_t^{\lambda ,\alpha}(f)=h_\lambda \left( \Big(1-\frac{y^2}{t^2}\Big)_+^\alpha h_\lambda (f)\right),
$$
where $(z)_+=\max\{0,z\}$, $z\in \mathbb{R}$. We can write
$$
B_t^{\lambda ,\alpha}(f)=\phi _{1/t}^{\lambda ,\alpha}\# _\lambda f,\quad t>0,
$$
being $\phi ^{\lambda ,\alpha}(z)=2^\alpha \Gamma (\alpha +1)z^{-\alpha -\lambda -1/2}J_{\alpha +\lambda +1/2}(z)$, $z\in (0,\infty )$. We can see that
$$
\psi _*^\lambda (f)=\sup_{t>0}|B_t^{\lambda ,\alpha}(f)|=:B_*^{\lambda ,\alpha}(f),
$$
and, for every $\rho >2$,
$$
V_\rho (\{\psi _t\}_{t>0})=V_\rho (\{B_t^{\lambda ,\alpha }\}_{t>0}),
$$
where $\psi =\phi ^{\lambda ,\alpha}$.

According to \cite[(5.4.3) and (5.11.6)]{Le} we have that, for every $\nu >-1$,
\begin{equation}\label{2.16}
|J_\nu (x)|\leq C\left\{
\begin{array}{ll}
x^{-1/2},&x\in (1,\infty ),\\[0.2cm]
x^\nu ,&x\in (0,1).
\end{array}
\right.
\end{equation}
Thus, we have that
$$
|\phi ^{\lambda, \alpha }(x)|\leq \frac{C}{(1+x)^{\alpha +\lambda +1}},\quad x\in (0,\infty ).
$$
Hence,
$$
|\phi ^{\lambda, \alpha }(x)|\leq \frac{C}{(1+x)^{2\lambda +2}},\quad x\in (0,\infty ),
$$
provided that $\alpha \geq \lambda +1$.

Since $\frac{d}{dz}(z^{-\nu} J_\nu (z))=-z^{-\nu }J_{\nu +1}(z)$, $z\in (0,\infty )$ and $\nu >-1$ (\cite[(5.3.5)]{Le}), we get
$$
\frac{d}{dx}(\phi ^{\lambda ,\alpha}(x))=2^\alpha \Gamma (\alpha +1)x^{-\alpha -\lambda -1/2}J_{\alpha +\lambda +3/2}(x),\quad x\in (0,\infty ).
$$

By using \eqref{2.16} we obtain
$$
\Big|\frac{d}{dx}\phi ^{\lambda ,\alpha} (x)\Big|\leq C\left\{
\begin{array}{ll}
x,&x\in (0,1)\\[0.2cm]
x^{-\alpha -\lambda -1},&x\in (1,\infty)
\end{array}
\right.\leq C\frac{x}{(1+x)^{\alpha +\lambda +2}}\leq C\frac{x}{(1+x)^{2\lambda +4}},\quad x\in (0,\infty ),
$$
when $\alpha \geq \lambda +2$.
According to \cite[(5.3.5)]{Le} we get
$$
\frac{d^2}{dx^2}\phi ^{\lambda ,\alpha} (x)= 2^\alpha \Gamma (\alpha +1)\left(x^{-\alpha -\lambda -3/2}J_{\alpha +\lambda +3/2}(x)-x^{\-\alpha -\lambda -1/2}J_{\alpha +\lambda +5/2}(x)\right),\quad x\in (0,\infty ).
$$
Then,
$$
\Big|\frac{d^2}{dx^2}\phi ^{\lambda ,\alpha} (x)\Big|\leq \frac{C}{(1+x)^{2\lambda +4}},\quad x\in (0,\infty ),
$$
provided that $\alpha \geq \lambda +3$.

The Stein-square function associated with the operator $\Delta _\lambda$ is defined by
$$
G_\lambda ^\alpha (f)(x)=\left(\int_0^\infty \Big|t\partial _tB_t^{\lambda ,\alpha }(f)(x)\Big|^2\frac{dt}{t}\right)^{1/2},\quad x\in (0,\infty ).
$$
We have that
$$
G_\lambda ^\alpha (f)(x)=\left(\Big|(\Phi _{1/t}\# _\lambda f)(x)\Big|^2\frac{dt}{t}\right)^{1/2}=g_\Phi ^\lambda(f)(x),\quad x\in (0,\infty ),
$$
where $\Phi (z)=\phi ^{\lambda ,\alpha }(z)+z\frac{d}{dz}\phi ^{\lambda ,\alpha }(z)$, $z\in (0,\infty )$.

We get 
$$
\frac{d}{dz}\Phi (z)=2\frac{d}{dz}\phi ^{\lambda ,\alpha}(z)+z\frac{d^2}{dz^2}\phi ^{\lambda ,\alpha }(z),\quad z\in (0,\infty ).
$$
As above it follows that
$$
\Big|\frac{d}{dz}\Phi (z)\Big|\leq \frac{C}{(1+x)^{\lambda +\alpha}}\leq \frac{C}{(1+x)^{2\lambda +3}},\quad x\in (0,\infty ),
$$
when $\alpha \geq \lambda +3$.

On the other hand, since $h_\lambda (\phi ^{\lambda,\alpha})(x)=(1-x^2)_+^\alpha$, $x\in (0,\infty)$, (\cite[(7) p. 48]{EMOT}) we obtain that
$
h_\lambda (\Phi )(x)=2\alpha x^2(1-x^2)_+^{\alpha -1}$, $x\in (0,\infty )$. Then, when $\alpha >1/2$,
$$
\int_0^\infty |h_\lambda (\Phi )(x)|^2\frac{dx}{x}<\infty.
$$

From Theorem \ref{Th1.1} we deduce the following result.
\begin{cor}
Let $\lambda >0$, $\rho >2$, $1<p<\infty$ and $w\in A_p^\lambda (0,\infty )$.

(a) If $\alpha \geq \lambda +1$, the maximal operator 
$$
B_*^{\lambda ,\alpha}(f)=\sup_{t>0}|B_t^{\lambda,\alpha} (f)|
$$
is bounded from $L^p(0,\infty ),w,m_\lambda)$ into itself and
$$
\|B_*^{\lambda , \alpha}(f)\|_{L^p((0,\infty ),w,m_\lambda )}\leq C[w]_{A_p^\lambda }^{\max\{1/(p-1), 1\}}\|f\|_{L^p((0,\infty ),w,m_\lambda )},
$$
for every $f\in L^p((0,\infty) ,w,m_\lambda )$.

(b) If $\alpha \geq \lambda +3$, by denoting $T$ the square function $G_\lambda ^\alpha$ or the $\rho$-variation operator $V_\rho (\{B_t^{\lambda,\alpha} \}_{t>0})$, $T$ is bounded from $L^p((0,\infty) ,w,m_\lambda )$ into itself and
$$
\|T(f)\|_{L^p((0,\infty ),w,m_\lambda )}\leq C[w]_{A_p^\lambda }^{\max\{1/(p-1), 1\}}\|f\|_{L^p((0,\infty ),w,m_\lambda )},
$$
for every $f\in L^p((0,\infty) ,w,m_\lambda )$.

Here $C>0$ does not depend on $w$.
\end{cor}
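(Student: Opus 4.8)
The strategy is to recognize each of the operators $B_*^{\lambda,\alpha}$, $G_\lambda^\alpha$ and $V_\rho(\{B_t^{\lambda,\alpha}\}_{t>0})$ as one of the three operators $\phi_*^\lambda$, $g_\phi^\lambda$, $V_\rho^\lambda(\{\phi_t\}_{t>0})$ already covered by Theorem~\ref{Th1.1}, for a suitable $\phi$ built from $\phi^{\lambda,\alpha}(z)=2^\alpha\Gamma(\alpha+1)z^{-\alpha-\lambda-1/2}J_{\alpha+\lambda+1/2}(z)$, and then to check that this $\phi$ lies in the hypothesis class of the proposition of Section~\ref{S2} that is actually used in the corresponding case. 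All the needed Bessel-function estimates have already been assembled in part \textbf{(III)} of Section~\ref{S2.4}. Since $B_t^{\lambda,\alpha}(f)=\phi_{1/t}^{\lambda,\alpha}\#_\lambda f$ and the reparametrization $t\mapsto 1/t$ leaves invariant a supremum, an $L^2((0,\infty),dt/t)$-norm and a $\rho$-variation, one has $B_*^{\lambda,\alpha}=\phi_*^{\lambda}$ with $\phi=\phi^{\lambda,\alpha}$, $V_\rho(\{B_t^{\lambda,\alpha}\}_{t>0})=V_\rho^\lambda(\{\phi_t^{\lambda,\alpha}\}_{t>0})$, and $G_\lambda^\alpha=g_\Phi^\lambda$ with $\Phi$ the function introduced in \textbf{(III)}.

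For part (a) I would use only that $\phi^{\lambda,\alpha}$ satisfies condition (i) of $Z^\lambda$: the proof of Theorem~\ref{Th1.1} for $\phi_*^\lambda$ in Section~\ref{S2.1} uses nothing about $\phi$ beyond $|\phi(x)|\le C(1+x^2)^{-\lambda-1}$, from which $\phi_*^\lambda(f)\le C\mathcal M_\lambda(f)$ follows and then the weighted bound for $\mathcal M_\lambda$ with exponent $1/(p-1)\le\max\{1/(p-1),1\}$. By \eqref{2.16}, together with the boundedness of $x^{-\nu}J_\nu$ near the origin and of $\sqrt{x}\,J_\nu$ near infinity, $|\phi^{\lambda,\alpha}(x)|\le C(1+x)^{-\alpha-\lambda-1}\le C(1+x^2)^{-\lambda-1}$ as soon as $\alpha\ge\lambda+1$, which already gives (a).

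For part (b), when $\alpha\ge\lambda+3$ the recurrence $\frac{d}{dz}(z^{-\nu}J_\nu(z))=-z^{-\nu}J_{\nu+1}(z)$ and \eqref{2.16} produce the three polynomial bounds $|\phi^{\lambda,\alpha}(x)|\le C(1+x)^{-2\lambda-2}$, $|(\phi^{\lambda,\alpha})'(x)|\le C(1+x)^{-2\lambda-3}$, $|(\phi^{\lambda,\alpha})''(x)|\le C(1+x)^{-2\lambda-4}$. By the remark opening Section~\ref{S2.4} these imply all the hypotheses of Proposition~\ref{propo 2.3}, and they also yield $|\Phi'(x)|\le C(1+x)^{-2\lambda-3}\le C(1+x^2)^{-(\lambda+3/2)}$, which is precisely the hypothesis of Proposition~\ref{Prop2.4}; hence the argument of Section~\ref{S2.3} (sparse domination via $V_\rho^\lambda$ bounded $L^1\to L^{1,\infty}$, $V_\rho^{\lambda,\#}$ bounded, and \cite[Theorem 1.1]{Lori}, \cite[Proposition 4.1]{Lori}) goes through verbatim for $\phi=\phi^{\lambda,\alpha}$ and delivers the estimate for $V_\rho(\{B_t^{\lambda,\alpha}\}_{t>0})$. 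For the square function, the same bounds give $|\Phi(x)|\le C(1+x)^{-2\lambda-2}$ and $\int_0^\infty|u^{2\lambda+2}\Phi'(u)|^2\frac{du}{u}<\infty$, i.e.\ conditions (a) and (b) of Proposition~\ref{propo 2.1} and the hypothesis of Proposition~\ref{propo 2.2}, while the identity $h_\lambda(\Phi)(x)=2\alpha x^2(1-x^2)_+^{\alpha-1}$ (a consequence of \cite[(7) p.~48]{EMOT}) gives $\int_0^\infty|h_\lambda(\Phi)(u)|^2\frac{du}{u}<\infty$ for every $\alpha>1/2$, which is condition (c); thus the argument of Section~\ref{S2.2} applies to $g_\Phi^\lambda$. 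In all three cases Theorem~\ref{Th1.1} then yields exactly the claimed inequality with exponent $\max\{1/(p-1),1\}$.

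The only genuinely delicate point is the bookkeeping of the thresholds on $\alpha$: one has to check that $\alpha\ge\lambda+1$ is exactly what the kernel size estimate feeding the maximal operator requires, that $\alpha\ge\lambda+3$ is what the second-derivative bound $|(\phi^{\lambda,\alpha})''(x)|\le C(1+x)^{-2\lambda-4}$ needs, and that this single condition $\alpha\ge\lambda+3$ is enough at the same time for the variation operator (through Propositions~\ref{propo 2.3} and~\ref{Prop2.4}) and for the square function (through the weaker hypotheses of Propositions~\ref{propo 2.1}--\ref{propo 2.2}, instead of through membership in $Z^\lambda$, which would force the worse threshold $\alpha\ge\lambda+4$ mentioned for the Stein square function in the Introduction). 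Once these verifications are carried out, the Corollary is an immediate consequence of the results already established.
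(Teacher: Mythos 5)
Your proposal is correct and follows essentially the same route as the paper: Section~\ref{S2.4}, part \textbf{(III)}, verifies exactly the Bessel-function estimates you list (size bound for $\alpha\ge\lambda+1$, the two derivative bounds and the bound on $\Phi'$ for $\alpha\ge\lambda+3$, and $h_\lambda(\Phi)(x)=2\alpha x^2(1-x^2)_+^{\alpha-1}$ giving condition (c) for $\alpha>1/2$), and then concludes via Theorem~\ref{Th1.1}, i.e.\ via the hypotheses of Propositions~\ref{propo 2.1}--\ref{Prop2.4} rather than via membership in $Z^\lambda$. Your remark about why the square function only needs $\alpha\ge\lambda+3$ (and not $\lambda+4$) makes explicit a point the paper leaves implicit, but the argument is the same.
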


\section{Proof of Theorem \ref{Th1.2}} In order to prove Theorem \ref{Th1.2} we can adapt the procedure used in the proof of \cite[Theorems 1.3 and 3.7]{DGKLWY} to a vector-valued setting. Actually the arguments in the proof of those theorems can be seen as a version in spaces of homogeneous type of the ones developed in \cite{LO} and \cite{LOR}. In \cite[\S 9]{Lori} Lorist commented that the arguments presented in \cite{LOR} can be extended to the vector-valued framework and to spaces of homogeneous type.

Suppose that the triple $(\Omega,d,\mu)$ is a space of homogeneous type where $d$ is a quasimetric on $\Omega$ and $\mu$ is a positive doubling measure on $\Omega$. Assume that $T$ is a linear and bounded operator from $L^1(\Omega,\mu)$ into $L^{1,\infty}_E(\Omega, \mu)$ where $E$ is a Banach space. Given a ball $B_0$ in $\Omega$ we define the maximal type operator $\mathcal{M}_{T,B_0}$ by
$$
\mathcal{M}_{T,B_0}(f)(x)=\sup_{\substack{B\ni x,B\subset B_0\\B \mbox{ ball }}}\|T(f\mathcal{X}_{3B_0\setminus 3B})\|_{L_E^\infty (B)},\quad x\in \Omega.
$$
By proceeding as in the proof of \cite[Lemma 3.2]{Le} (see \cite[Lemma 3.6]{DGKLWY}) we can see that, for almost all $x\in B_0$,
$$
\|T(f\mathcal{X}_{3B_0})(x)\|_E\leq C\|T\|_{L_E^1(\Omega ,\mu)\rightarrow L_E^{1,\infty}(\Omega ,\mu)}|f(x)|+\mathcal{M}_{T,B_0}(f)(x).
$$
This property is used in the proof of Theorem \ref{Th1.2} for each one of the operators we consider.
\subsection{Proof of Theorem \ref{Th1.2} for $\phi_{*,b}^{\lambda ,m}$}\label{S3.1}
We are going to see the properties that we need in order that the arguments developed in the proof of \cite[Theorems 1.3 and 3.7]{DGKLWY} work for $\phi_{*,b}^{\lambda ,m}$.

By proceeding as in Section \ref{S2.1} we can see that
$$
\phi _{*,b}^{\lambda ,1}(f)(x)\leq \mathcal{M}_\lambda (|b(x)-b(\cdot )|f)(x),\quad x\in (0,\infty ).
$$
Then, Theorem \ref{Th1.2} for $\phi _{*,b}^{\lambda ,1}$ can be deduced from \cite[Theorem 1.2]{GVW}. In \cite{GHST} $L^p$-weighted inequalities were obtained for the maximal commutator operator $\mathcal{M}_b^m$ defined by
$$
\mathcal{M}_b^m(f)(x)=\mathcal{M}(|b(x)-b(\cdot)|^mf)(x),\quad x\in (0,\infty ),
$$
where $\mathcal{M}$ is the euclidean Hardy-Littlewood operator. In \cite{GHST} they did not study how the $L^p(w)$-norm of the operators depend on the weight $w$.

\begin{prop}
Let $\lambda >0$. Assume that $\phi$ is a differentiable function on $(0,\infty )$ such that $|\phi ^{(k)}(x)|\leq C(1+x)^{-2\lambda -2-k}$, $x\in (0,\infty )$ and $k=0,1$. Let $\ell \in \mathbb{N}$. We consider the operator
$$
[T_\ell (f)(x)](t)=(\phi _t\# _\lambda f)(x),\quad t\in \Big[\frac{1}{\ell},\ell\Big]\mbox{ and }x\in (0,\infty ).
$$
We have that

(i) For every $f\in C_c^\infty (0,\infty )$, the space of smooth functions with compact support on $(0,\infty )$,
$$
T_\ell (f)(x)=\int_0^\infty \,_\lambda \tau _x(\phi_\centerdot )(y)f(y)y^{2\lambda }dy,\quad x\not\in {\rm supp }f,
$$
where the integral is understood in the Banach space $(C([1/\ell ,\ell]),\|\cdot \|_\ell)$ of the continuous functions on $[1/\ell ,\ell]$ where
$$
\|g\|_\ell =\max_{t\in [1/\ell ,\ell]}|g(t)|,\quad g\in C\Big(\Big[\frac{1}{\ell},\ell\Big]\Big).
$$

(ii) We define
$$
K_t(x,y)=\,_\lambda \tau _x(\phi _t)(y),\quad t,x,y\in (0,\infty ).
$$
Then, there exists $C>0$ such that
\begin{equation}\label{3.1}
    \|K_{\centerdot} (x,y)\|_\infty \leq \frac{C}{m_\lambda (B(x,|x-y|))},\quad x,y\in (0,\infty ),\;x\not=y,
\end{equation}
and
\begin{equation}\label{3.2}
\|\partial _xK_\centerdot (x,y)\|_\infty +\|\partial _yK_\centerdot (x,y\|_\infty \leq \frac{C}{|x-y|m_\lambda (B(x,|x-y|))},\quad x,y\in (0,\infty ),\;x\not=y.
\end{equation}
\end{prop}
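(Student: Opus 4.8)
The plan is to deduce both statements from computations already carried out in Section~\ref{S2}, now keeping the supremum over $t$ in place of an $L^2(dt/t)$-norm. Observe first that $|\phi(x)|\le C(1+x)^{-2\lambda-2}$ implies $|\phi(x)|\le C(1+x^2)^{-\lambda-1}$, so $\phi$ satisfies property~(i) in the definition of $Z^\lambda$ and \cite[(2.4)]{YY} applies, giving
$$
|K_t(x,y)|\le\frac{C}{m_\lambda(B(x,t))+m_\lambda(B(x,|x-y|))}\,\frac{t}{t+|x-y|}\le\frac{C}{m_\lambda(B(x,|x-y|))},\quad t>0,\ x\ne y.
$$
Taking the supremum over $t\in[1/\ell,\ell]$ (in fact over all $t>0$) yields \eqref{3.1}.

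For \eqref{3.2} I would differentiate the explicit integral formula for $\,_\lambda\tau_x(\phi_t)(y)$ exactly as in the proof of Proposition~\ref{propo 2.1}; using $|x-y\cos\theta|\le(x^2+y^2-2xy\cos\theta)^{1/2}$ one obtains
$$
|\partial_xK_t(x,y)|\le\frac{C}{t^{2\lambda+2}}\int_0^\pi\Big|\phi'\Big(\tfrac{\sqrt{x^2+y^2-2xy\cos\theta}}{t}\Big)\Big|\sin^{2\lambda-1}\theta\,d\theta.
$$
The decisive observation is that $|\phi'(u)|\le C(1+u)^{-2\lambda-3}$ forces $\sup_{u>0}u^{2\lambda+2}|\phi'(u)|<\infty$, hence $\sup_{t>0}t^{-2\lambda-2}|\phi'(r/t)|\le Cr^{-2\lambda-2}$ for every $r>0$; this is the pointwise-in-$t$ counterpart of the $L^2(dt/t)$ bound used in deriving \eqref{2.2}. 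Substituting $r=(x^2+y^2-2xy\cos\theta)^{1/2}$ and applying \cite[Lemma~3.1]{BCN} to the angular integral gives $|\partial_xK_t(x,y)|\le C(|x-y|\,m_\lambda(B(x,|x-y|)))^{-1}$, uniformly in $t>0$. Since $\,_\lambda\tau_x(\phi_t)(y)$ depends on $(x,y)$ only through $(x^2+y^2-2xy\cos\theta)^{1/2}$, it is symmetric in $x$ and $y$, so the same bound holds for $\partial_yK_t(x,y)$ with $m_\lambda(B(y,|x-y|))$ in place of $m_\lambda(B(x,|x-y|))$, and the doubling property of $m_\lambda$ converts this into the corresponding estimate; adding the two and taking $\sup_{t\in[1/\ell,\ell]}$ gives \eqref{3.2}.

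For part~(i), fix $f\in C_c^\infty(0,\infty)$ and $x\notin\supp f$, and set $\delta=\operatorname{dist}(x,\supp f)>0$. Since $\phi\in C^1(0,\infty)$, the map $(t,z)\mapsto\phi_t(z)$ is jointly continuous on $(0,\infty)^2$, and for $y$ in a neighbourhood of $\supp f$ one has $(x^2+y^2-2xy\cos\theta)^{1/2}\ge\delta$; dominated convergence then shows that $y\mapsto K_\centerdot(x,y)=\big(\,_\lambda\tau_x(\phi_t)(y)\big)_{t\in[1/\ell,\ell]}$ is continuous from that neighbourhood into $C([1/\ell,\ell])$. Consequently $F_x(y):=K_\centerdot(x,y)\,f(y)\,y^{2\lambda}$ is a continuous, compactly supported map from $(0,\infty)$ into $C([1/\ell,\ell])$, and by \eqref{3.1} it satisfies $\|F_x(y)\|_\ell\le C\,m_\lambda(B(x,\delta))^{-1}|f(y)|y^{2\lambda}$ on $\supp f$; hence it is Bochner integrable and $\int_0^\infty F_x(y)\,dy\in C([1/\ell,\ell])$ is well defined. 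For each $t\in[1/\ell,\ell]$ the point evaluation $g\mapsto g(t)$ is a bounded linear functional on $C([1/\ell,\ell])$, and therefore commutes with the Bochner integral; this gives $\big[\int_0^\infty F_x(y)\,dy\big](t)=\int_0^\infty\,_\lambda\tau_x(\phi_t)(y)f(y)y^{2\lambda}\,dy=(f\#_\lambda\phi_t)(x)=(\phi_t\#_\lambda f)(x)=[T_\ell(f)(x)](t)$. As $t$ is arbitrary and both sides are continuous in $t$, the claimed identity holds in $C([1/\ell,\ell])$.

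I expect the only delicate points to be of bookkeeping nature: justifying $\sup_{t>0}t^{-2\lambda-2}|\phi'(r/t)|\le Cr^{-2\lambda-2}$ under the stated decay of $\phi'$ (this is precisely why the hypothesis is phrased as $|\phi^{(k)}(x)|\le C(1+x)^{-2\lambda-2-k}$ rather than through the conditions defining $Z^\lambda$), and verifying the continuity and compact support of $F_x$ so that the Bochner integral in~(i) is legitimate and commutes with point evaluation. Everything else is a direct transcription of the arguments in the proof of Proposition~\ref{propo 2.1}, together with \cite[(2.4)]{YY} and \cite[Lemma~3.1]{BCN}.
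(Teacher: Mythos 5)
Your proof is correct and follows essentially the same route as the paper: \eqref{3.1} via \cite[(2.4)]{YY}, \eqref{3.2} by differentiating the Hankel-translation formula, using the decay of $\phi'$ and the angular-integral estimate of \cite[Lemma 3.1]{BCN} (the paper passes through a slightly sharper \cite[(2.8)]{YY}-type bound before relaxing it, but the computation is the same), and symmetry plus doubling for the $\partial_y$ term. For part (i) you verify Bochner integrability and commute the integral with point evaluations, whereas the paper pairs with arbitrary complex measures in the dual of $C([1/\ell,\ell])$ and applies Fubini; this is only a cosmetic difference, so nothing needs changing.
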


\begin{proof}
Firstly we prove \eqref{3.1}. Since $|\phi (x)|\leq C(1+x)^{-2\lambda -2}$, $x\in (0,\infty )$, as in \cite[(2.4)]{YY} we get
$$
|K_t(x,y)|\leq \frac{C}{m_\lambda (B(x,t))+m_\lambda (B(x,|x-y|))}\frac{t}{t+|x-y|},\quad t,x,y\in (0,\infty).
$$
Then,
$$
\|K_\centerdot(x,y)\|_\infty \leq \frac{C}{m_\lambda (B(x,|x-y|))},\quad x,y\in (0,\infty ),x\not=y.
$$

We have, for every $t,x,y\in (0,\infty )$,
$$
\partial _xK_t(x,y)=\frac{\Gamma (\lambda +1/2)}{\Gamma (\lambda )\sqrt{\pi}t^{2\lambda +2}}\int_0^\pi \phi'\left(\frac{\sqrt{x^2+y^2-2xy\cos \theta}}{t}\right)\frac{x-y\cos \theta}{\sqrt{x^2+y^2-2xy\cos \theta}}\sin ^{2\lambda -1}\theta d\theta.
$$
Since $|\phi'(x)|\leq C(1+x)^{-2\lambda -3}$, $x\in (0,\infty )$, and $|x-y\cos \theta |\leq \sqrt{x^2+y^2-2xy\cos \theta}$, $x,y\in (0,\infty )$ and $\theta \in (0,\pi)$, we get
$$
|\partial _xK_t(x,y)|\leq Ct\int_0^\pi \frac{\sin ^{2\lambda -1}\theta}{(t^2+x^2+y^2-2xy\cos \theta )^{\lambda +3/2}}d\theta,\quad t,x,y\in (0,\infty ).
$$
By proceeding as in \cite[(2.8)]{YY} we obtain
\begin{align*}
|\partial _xK_t(x,y)|&\leq C\frac{1}{m_\lambda (B(x,t))+m_\lambda (B(x,|x-y|))}\frac{t}{(t+|x-y|)^2}\\
&\leq \frac{C}{|x-y|m_\lambda (B(x,|x-y|))},\quad t,x,y\in (0,\infty),\;x\not=y.
\end{align*}
Then,
$$
\|\partial _xK_\centerdot (x,y)\|_\infty\leq \frac{C}{|x-y|m_\lambda (B(x,|x-y|))},\quad x,y\in (0,\infty ),\;x\not=y.
$$
By simmetry, \eqref{3.2} is established.

Let $f\in C_c^\infty (0,\infty )$ and $\ell \in \mathbb{N}$. According to \eqref{3.1} the integral defining $T_\ell (t)(x)$ is $\|\cdot \|_\ell$-Bochner convergent for every $x\not\in {\rm supp }f$. The dual space of $C([1/\ell ,\ell])$ is the space $\mathbb{M}([1/\ell ,\ell ])$ of complex measures in $[1/\ell ,\ell]$. Let $\mu \in \mathbb{M}([1/\ell ,\ell ])$. We have that
$$
    \int_0^\infty \int_{1/\ell}^\ell K_t(x,y)d\mu (t)f(y)y^{2\lambda}dy=\int_{1/\ell}^\ell (\phi _t\#_\lambda f)(x)d\mu (t),\quad x\not\in {\rm supp }f.
$$
This equality is justified because, from \eqref{3.1} we get
$$
\int_0^\infty \int_{1/\ell}^\ell |K_t(x,y)|d|\mu|(t)|f(y)|y^{2\lambda }dy\leq C|\mu|\Big(\Big[\frac{1}{\ell },\ell\Big]\Big)\int_{{\rm supp }f}\frac{dy}{m_\lambda (B(x,|x-y|))}<\infty,\quad x\not\in {\rm supp }f.
$$
Then, for every $x\not\in {\rm supp }f$
$$
\left(\int_0^\infty \,_\lambda \tau _x(\phi_\centerdot )(y)f(y)y^{2\lambda }dy\right)(t)=(\phi _t\# _\lambda f)(x),\quad \mbox{ a.e. }t\in \Big(\frac{1}{\ell},\ell\Big).
$$
\end{proof}

It was established in Section \ref{S2.1} that $\phi_*^\lambda(f)\leq C\mathcal{M}_\lambda (f)$. Then, $\phi _*^\lambda$ is bounded from $L^1((0,\infty ),m_\lambda)$ into $L^{1,\infty }((0,\infty ),m_\lambda )$.

We now define the operator
$$
\mathbb{M}_\phi ^{\lambda ,\#}(f)(x)=\sup_{\substack{Q\ni x\\Q\mbox{ interval}}}\esssup_{y_1,y_2\in Q}\Big|\phi _*^\lambda (f\mathcal{X}_{(0,\infty )\setminus (3Q)})(y_1)-\phi _*^\lambda (f\mathcal{X}_{(0,\infty )\setminus (3Q)})(y_2)\Big|,\quad x\in (0,\infty ).
$$
According to \eqref{3.2} by proceeding as in the proof of Proposition \ref{propo 2.2} we can see the following.
\begin{prop} \label{Prop 3.2}
Let $\lambda >0$. Suppose that $\phi$ is a differentiable function on $(0,\infty )$ such that $|\phi ^{(k)}(x)|\leq C(1+x)^{-2\lambda -1-k}$, $x\in (0,\infty )$ and $k=0,1$. Then, the operator $\mathbb{M}_\phi ^{\lambda ,\#}$ is bounded from $L^p((0,\infty ),m_\lambda )$ into itself, for every $1<p<\infty$, and from $L^1((0,\infty ),m_\lambda )$ into $L^{1,\infty }((0,\infty ),m_\lambda )$.
\end{prop}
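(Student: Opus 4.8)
The plan is to transcribe the proof of Proposition \ref{propo 2.2}, replacing Minkowski's inequality (which was the device that made the square function a Banach-valued linear object) by the elementary inequality $|\sup_{t}|a_t|-\sup_t|b_t||\le\sup_t|a_t-b_t|$, and replacing the vector-valued gradient estimate \eqref{2.2} in $L^2((0,\infty),\frac{dt}{t})$ by the uniform-in-$t$ gradient estimate \eqref{3.2}. Concretely, one fixes $x\in(0,\infty)$ and an interval $Q=B(x_Q,r_Q)\ni x$, and for $y_1,y_2\in Q$ writes $\phi_*^\lambda(g)(y)=\sup_{t>0}|(g\#_\lambda\phi_t)(y)|$ to get
$$
\big|\phi_*^\lambda(f\mathcal X_{(0,\infty)\setminus(3Q)})(y_1)-\phi_*^\lambda(f\mathcal X_{(0,\infty)\setminus(3Q)})(y_2)\big|\le\sup_{t>0}\Big|\int_{(0,\infty)\setminus(3Q)}\big(\,_\lambda\tau_{y_1}(\phi_t)(z)-\,_\lambda\tau_{y_2}(\phi_t)(z)\big)f(z)z^{2\lambda}dz\Big|.
$$
This reduces everything to a pointwise domination $\mathbb M_\phi^{\lambda,\#}(f)(x)\le C\mathcal M_\lambda(f)(x)$, from which the $L^p$-boundedness for $1<p<\infty$ and the weak-type $(1,1)$ boundedness follow from the corresponding properties of $\mathcal M_\lambda$.

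For the kernel difference I would use the fundamental theorem of calculus in the translation variable, $\,_\lambda\tau_{y_1}(\phi_t)(z)-\,_\lambda\tau_{y_2}(\phi_t)(z)=\int_{y_1}^{y_2}\partial_w\big(\,_\lambda\tau_w(\phi_t)(z)\big)dw$, together with \eqref{3.2}, which gives $\sup_{t>0}\big|\partial_w\big(\,_\lambda\tau_w(\phi_t)(z)\big)\big|\le C/\big(|w-z|\,m_\lambda(B(w,|w-z|))\big)$. (The decay assumed here on $\phi,\phi'$ is one power weaker than the one under which \eqref{3.2} was originally stated, but the same computation — proceeding as in \cite[(2.8)]{YY} — still yields \eqref{3.2}, since raising the exponent of $(t^2+x^2+y^2-2xy\cos\theta)$ in the integrand supplies the extra factor $(t+|x-y|)^{-1}$ that is lost.) Since $z\notin 3Q$ while $y_1,y_2\in Q$, for every $w$ between $y_1$ and $y_2$ one has $|w-z|$ comparable to $|y_1-z|$ and, by the doubling property of $m_\lambda$, $m_\lambda(B(w,|w-z|))$ comparable to $m_\lambda(B(y_1,|y_1-z|))$; hence
$$
\big|\phi_*^\lambda(f\mathcal X_{(0,\infty)\setminus(3Q)})(y_1)-\phi_*^\lambda(f\mathcal X_{(0,\infty)\setminus(3Q)})(y_2)\big|\le C|y_1-y_2|\int_{(0,\infty)\setminus(3Q)}\frac{|f(z)|z^{2\lambda}}{|y_1-z|\,m_\lambda(B(y_1,|y_1-z|))}dz.
$$

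From here the argument is word for word the end of the proof of Proposition \ref{propo 2.2}: setting $\widetilde Q=B(y_1,r_Q)$ (so that $(0,\infty)\setminus(3Q)\subset(0,\infty)\setminus(2\widetilde Q)$), one decomposes the last integral over the annuli $2^{k+1}\widetilde Q\setminus 2^k\widetilde Q$, $k\ge1$, bounds the $k$-th term by $C\,2^{-k}r_Q^{-1}\mathcal M_\lambda(f)(x)$ (using $x\in 2\widetilde Q\subset 2^{k+1}\widetilde Q$), sums the geometric series and invokes $|y_1-y_2|\le 2r_Q$ to conclude $\mathbb M_\phi^{\lambda,\#}(f)(x)\le C\mathcal M_\lambda(f)(x)$ for every $x\in(0,\infty)$.

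I do not expect a genuine obstacle here, since the proof is essentially a transcription of that of Proposition \ref{propo 2.2}. The only two points needing care are, first, that the supremum in $t$ must be pulled through the difference \emph{before} estimating — this is the single structural difference from the square-function case, where Minkowski's inequality was used instead — and, second, verifying that the uniform estimate \eqref{3.2} remains valid under the (slightly weaker) decay hypotheses on $\phi$ assumed in the statement, which is done by repeating the pointwise bound on $\partial_w(\,_\lambda\tau_w(\phi_t)(z))$ exactly as in \cite[(2.8)]{YY}.
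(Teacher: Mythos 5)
Your proof is correct and is essentially the paper's own argument: the paper establishes this proposition precisely by invoking the uniform gradient estimate \eqref{3.2} and repeating the annulus decomposition from the proof of Proposition \ref{propo 2.2}, with the reverse triangle inequality for the supremum playing the role that Minkowski's inequality played in the square-function case. Your parenthetical verification that \eqref{3.2} still holds under the weaker decay $|\phi^{(k)}(x)|\leq C(1+x)^{-2\lambda-1-k}$ (since $(t+w)^{-2\lambda-2}\leq w^{-2\lambda-2}$ and \cite[Lemma 3.1]{BCN} then give the bound uniformly in $t$) is a worthwhile addition, as the hypotheses of this proposition and of the one in which \eqref{3.2} was stated do not literally coincide.
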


As it was commented at the beginning of Section \ref{S2} from Proposition \ref{Prop 3.2} we can deduce that the operator $\mathcal{M}_\phi ^\lambda$ defined by
$$
\mathcal{M}_\phi ^\lambda (f)(x)=\sup_{\substack{Q\ni x\\Q\mbox{ interval}}}\|\phi _*^\lambda (f\mathcal{X}_{(0,\infty )\setminus (3Q)})\|_{L^\infty (Q)},\quad x\in (0,\infty ),
$$
is bounded from $L^1((0,\infty ),m_\lambda )$ into $L^{1,\infty}((0,\infty ),m_\lambda ))$.

We have established all the properties that we need to prove Theorem \ref{Th1.2} by using the arguments in the proofs of \cite[Theorem 1.3 and 3.7]{DGKLWY}.

\subsection{Proof of Theorem \ref{Th1.2} for $g_{\phi ,b}^{\lambda, m}$}
By using the properties that were seen in Section \ref{S2.2} we can prove Theorem \ref{Th1.2} for $g_{\phi ,b}^{\lambda, m}$ by proceeding as in \cite[Theorems 1.3 and 3.7]{DGKLWY}.

\subsection{Proof of Theorem \ref{Th1.2} for $V_{\rho ,b}^{\lambda ,m}(\{\phi _t\}_{t>0})$}
By taking into account the properties established in Section \ref{S2.3}, in order to prove Theorem \ref{Th1.2} for $V_{\rho,b} ^{\lambda,m} (\{\phi _t\}_{t>0})$ (see also Section \ref{S3.1}) it is sufficient to show that
$$
\|\{\,_\lambda \tau _x(\phi _t)(y)\}_{t>0}\|_{v_\rho }\leq \frac{C}{m_\lambda(B(x,|x-y|))},\quad x,y\in (0,\infty ),\;x\not=y.
$$

We have that
$$
\|\{\,_\lambda \tau _x(\phi _t)(y)\}_{t>0}\|_{v_\rho }\leq \int_0^\infty |\partial _t\,_\lambda\tau _x(\phi _t)(y)|dt,\quad x,y\in (0,\infty ).
$$
Since $\partial _t\,_\lambda\tau _x(\phi _t)(y)=-\frac{1}{t}\,_\lambda \tau_y(\Phi _t)(x)$, $t,x,y\in (0,\infty )$, where $\Phi (x)=(2\lambda +1)\phi (x)+x\phi '(x)$, $x\in (0,\infty )$, we deduce that
\begin{align*}
    \|\{\,_\lambda \tau _x(\phi _t)(y)\}_{t>0}\|{v_\rho} &\leq C\int_0^\infty \Big|\,_\lambda \tau_y(\Phi _t)(x)\Big|\frac{dt}{t}\\
    &\leq C\int_0^\pi \sin ^{2\lambda -1}\theta \int_0^\infty \Big|\Phi \Big(\frac{\sqrt{x^2+y^2-2xy\cos \theta }}{t}\Big)\Big|\frac{dt}{t^{2\lambda +2}}d\theta\\
    &\leq C\int_0^\pi \frac{\sin ^{2\lambda -1}\theta}{(x^2+y^2-2xy\cos \theta)^{\lambda +1/2}}\int_0^\infty u^{2\lambda }|\Phi (u)|dud\theta .
\end{align*}
Since $|\Phi (u)|\leq C(1+u)^{-2\lambda -2}$, $u\in (0,\infty )$, according to \cite[Lemma 3.1]{BCN} it follows that
$$
\|\{\,_\lambda \tau _x(\phi _t)(y)\}_{t>0}\|_{v_\rho }\leq \frac{C}{m_\lambda (B(x,|x-y|))},\quad x,y\in (0,\infty ),\;x\not=y.
$$

\subsection{Applications of Theorem \ref{Th1.2}}
The special cases of Theorem \ref{Th1.1} presented in Section \ref{S2.4} also can be seen as special cases of Theorem \ref{Th1.2}.

%\bibliographystyle{acm}
%\bibliography{references}

\end{document}